\documentclass[11pt]{article}
\usepackage[margin=1in]{geometry}
\usepackage{amsmath,amssymb,amsfonts}
\usepackage{amsthm}
\usepackage{graphicx}
\usepackage{bm}
\usepackage{booktabs}
\usepackage{mathtools}
\usepackage{hyperref}
\usepackage{authblk}
\usepackage{algorithm}
\usepackage[title]{appendix}
\usepackage[noend]{algpseudocode}  
\usepackage{xcolor}
\newtheorem{theorem}{Theorem}[section]
\newtheorem{corollary}{Corollary}[theorem]
\newtheorem{lemma}[theorem]{Lemma}
\newtheorem{proposition}[theorem]{Proposition}
\newtheorem{definition}[theorem]{Definition}

\newtheorem{remark}[subsection]{Remark}
\hypersetup{colorlinks=true,linkcolor=blue,citecolor=blue,urlcolor=blue}
\usepackage{enumitem}
\newcommand{\Nn}{\mathcal{N}}
\newcommand{\RR}{\mathbb{R}}
\newcommand{\Hh}{\mathbb{H}^{2\alpha}}
\newcommand{\flp}{(-\Delta)^\alpha}
\newcommand{\flpg}{(-\Delta_g)^\alpha}

\newcommand{\dom}{\text{Dom}}
\title{Wellposedness and dynamics of two types of reaction--nonlocal diffusion systems under the inhomogeneous spectral fractional Laplacian}
\author{Pu Yuan, Paul A. Zegeling}
\date{\today}

\begin{document}
\maketitle

\begin{abstract}
Reaction–nonlocal diffusion equations model nonlocal transport and anomalous diffusion by replacing the Laplacian with a fractional power, capturing diffusion mechanisms beyond Brownian motion. We primarily study the semilinear problem 
\[
\partial_t u + \epsilon^2\flpg u = \Nn(u)
\]
allowing constant inhomogeneous Dirichlet boundary condition $u|_{\partial\Omega}=g$. To handle the boundary constraint, we use a harmonic lifting to reformulate the problem as an equivalent homogeneous system with a shifted nonlinearity. Working in \(C_0(\Omega)\), analytic contraction semigroup theory yields the Duhamel formula and quantitative smoothing, implying local well-posedness for locally Lipschitz reactions and a blow-up alternative. The semigroup viewpoint also provides $L^\infty$-contractivity and positivity preservation, which drive pointwise maximum principles and stability bounds. Furthermore, we analyze two prototypes. For the bistable RNDE, we derive an energy dissipation identity and, using a fractional weak maximum principle, obtain an invariant-range property that confines solutions between the two stable steady states. For the nonlocal Gray--Scott system with possibly different fractional diffusion orders, we prove that solutions preserve positivity. Moreover, we identify an explicit \(L^\infty\) invariant set ensuring global boundedness, and derive an eigenfunction-weighted interior \(L^2\) bound. Finally, we perform numerical simulations using a sine pseudospectral discretization and ETDRK4 time-stepping, which illustrate the impact of fractional orders on pattern formation, consistently with our analytical results.

\vspace{1em}
\noindent \textbf{Keywords:} Reaction--nonlocal diffusion equation, spectral fractional Laplacian,
inhomogeneous Dirichlet boundary condition, fractional weak maximum principle,
ETDRK4.

\vspace{1em}
\noindent\textbf{AMS subject classifications:} 35R11, 35K57, 35K58, 35B50, 65M70.
\end{abstract}

\section{Introduction}

Reaction--nonlocal diffusion equations (RNDEs) generalize classical reaction--diffusion models by replacing the Laplacian with a fractional power $(-\Delta)^\alpha$ with $\alpha \in (0,1)$, effectively capturing nonlocal transport and anomalous diffusion across physics and biology.
The Dirichlet, or integral, fractional Laplacian is defined through a singular integral after zero extension outside $\Omega$ and corresponds to a symmetric $2\alpha$-stable process killed when it exits the domain \cite{ros2016nonlocal,frac_laplacian_a0,frac_laplacian,bucur2016nonlocal}. The spectral fractional Laplacian, used in this paper, is instead the fractional power of the Dirichlet Laplacian and is defined through the eigenpairs $\{(\lambda_k,\varphi_k)\}$ of $-\Delta$ on $\Omega$ \cite{frac_laplacian,antil2015fem}. It is therefore tied to the bounded-domain heat semigroup and to the geometry encoded by the Dirichlet eigenfunctions, this makes it natural for models in which diffusion is constrained by the boundary of the physical domain and for sine/eigenfunction-based discretizations \cite{cusimano2018discretizations,servadei2014spectrum}. 

This distinction becomes particularly important for semilinear RNDEs, where the boundary condition, the nonlocal diffusion operator, and the nonlinear reaction term must be treated in a compatible functional framework.
Well-posedness, qualitative dynamics, and numerical approximation for fractional semilinear equations have been studied from several viewpoints, including semilinear fractional elliptic problems \cite{antil2015fem}, fractional Allen--Cahn dynamics related to phase separation and bistable fronts \cite{sohaib2024space,chan2017traveling}, fractional Gray--Scott-type models \cite{wang2019fractional,yuan2024adaptive}, and nonlocal kernel-based reaction--diffusion systems \cite{laurenccot2023nonlocal,jian2020fastcompact}. These works provide important analytical and computational foundations, but they usually focus either on homogeneous spectral data, integral/Riesz-type operators, or numerical dynamics without a systematic treatment of inhomogeneous Dirichlet problems driven by the spectral fractional Laplacian. The purpose of the present work is to combine harmonic lifting, analytic semigroup theory in $C_0(\Omega)$, maximum-principle arguments, and spectral time-stepping to obtain a coherent framework for boundary-driven RNDEs involving the inhomogeneous spectral fractional Laplacian.

We consider the semilinear problem with a time-independent inhomogeneous Dirichlet boundary condition
\begin{equation}\label{inhomo}
\left\{
\begin{aligned}
&\partial_t u + \epsilon^2 \flpg u = \Nn(u), \qquad x\in\Omega,\ t>0,\\
&u|_{\partial\Omega}=g,
\end{aligned}
\right.
\end{equation}
where $\epsilon>0$ is the diffusion coefficient, $\Nn$ denotes the reaction operator, and $g$ is the prescribed boundary value. Such conditions are essential in many models to represent external feeds or imposed constraints. However, for spectral fractional Laplacians, inhomogeneous boundary conditions cannot be treated by simply repeating the standard definition of fractional powers, as the class of smooth functions satisfying a fixed nonzero boundary condition does not form a linear space \cite{frac_laplacian,cusimano2018discretizations,Spectral_frac_def}. To address this, we introduce the inhomogeneous spectral fractional Laplacian $\flpg$, which preserves the spectral structure while incorporating the boundary value.

Following the harmonic lifting approach \cite{frac_laplacian}, we reduce \eqref{inhomo} to a homogeneous problem. Let $G$ be the harmonic lifting of $g$, and set $w=u-G$. Then $w$ satisfies a homogeneous Dirichlet condition and solves
\[
\partial_t w + \epsilon^2\flp w = \Nn(w+G), \qquad w|_{\partial\Omega}=0.
\]
The resulting homogeneous formulation has a linear part governed by an analytic semigroup. Specifically, the operator $-\epsilon^2\flp$ generates an analytic contraction semigroup $\{e^{-\epsilon^2\flp t}\}_{t \ge 0}$ on the Banach space $C_0(\Omega)$. The variation-of-constants formula then provides the essential estimates for establishing well-posedness, time-regularity, and the long-time dynamics of solutions. However, the reduction also changes the nonlinearity through $\Nn(\cdot)\mapsto \Nn(\cdot+G)$, which may break sign conditions, monotonicity, and dissipative structures required for global energy estimates. This issue is particularly relevant for coupled systems and is addressed in Section~5.

The first main contribution is the well-posedness theory for the homogeneous fractional reaction--diffusion equation \eqref{ori_eq} in the Banach space $C_0(\Omega)$. Since the spectral fractional Laplacian is $m$-accretive, the operator $-\epsilon^2(-\Delta)^\alpha$ generates a contractive analytic semigroup $Q(t)=e^{-\epsilon^2(-\Delta)^\alpha t}$. Assuming that $\mathcal N:C_0(\Omega)\to C_0(\Omega)$ is locally Lipschitz on bounded sets, the variation-of-constants formula yields a unique mild solution on a maximal time interval,
\[
u(t)=Q(t)u_0+\int_0^t Q(t-s)\,\Nn(u(s))\,ds .
\]
We then recall an abstract blow-up alternative for the maximal existence time. Moreover, under the one-sided quadratic bound $u\,\mathcal N(u)\le C u^2$ for sufficiently large $|u|$, an $L^2$ a priori estimate gives global existence. Finally, standard analytic-semigroup estimates imply positive-time regularity: $u(t)\in\mathbb H^{2\alpha}(\Omega)$ for $t>0$ and $u\in C^1((0,T],C_0(\Omega))$. This provides the abstract well-posedness framework used in the rest of the paper.

The second main contribution applies this framework to two classical models: a bistable RNDE and a coupled fractional reaction--diffusion system. For the bistable equation, after harmonic lifting, we define an energy functional and prove an energy dissipation identity, which implies monotone decay of the energy and yields a coercivity estimate. Consequently, weak solutions are uniformly bounded in the fractional energy space $\mathbb{H}^{\alpha}(\Omega)$. Combining the compact embedding $\mathbb{H}^{\alpha}(\Omega)\hookrightarrow L^2(\Omega)$ with the dissipation estimate, we prove that the $\omega$-limit set is nonempty and that every $\omega$-limit point is a stationary solution in the weak sense. A fractional weak maximum principle further gives an invariant-range property: solutions initially lying between the stable steady states remain in this range for all time. The result is then extended to the inhomogeneous operator $\flpg$. Numerically, the fractional Nagumo equation is used as a representative example, sine pseudospectral discretization and an ETDRK4 time integrator are employed to illustrate the effect of the fractional order $\alpha$ on spreading and to confirm the boundedness predicted by the analysis.

For the coupled case, we consider a nonlocal Gray--Scott model in which the two components $(u,v)$ may diffuse with different fractional orders $\alpha$ and $\beta$. Since the reactant $u$ is maintained at a fixed level on $\partial\Omega$, the boundary data are inhomogeneous. As in the bistable case, a lifting approach converts the original system into an equivalent homogeneous Dirichlet system. For this homogeneous formulation, we prove a sharp invariant structure: positivity is preserved, with $v(\cdot,t)\ge0$ and $u(\cdot,t)\ge -1$, and an explicit $L^\infty$ invariant set is identified. In particular, $\|u(\cdot,t)\|_\infty\le1$ and $\|v(\cdot,t)\|_\infty\le \frac{F+\kappa}{2-\|u_0\|_\infty}$ for all $t\ge0$, so trajectories remain bounded and the solution extends globally in time. We also derive an eigenfunction-weighted interior $L^2$ estimate for $v$, which controls the autocatalyst away from the boundary through the first Dirichlet eigenpair of $(-\Delta)^\alpha$. The same numerical method is then used to simulate distinct pattern formations generated by different choices of $(F,\kappa)$ and fractional orders.

The remainder of the paper is organized as follows. Section~2 recalls the spectral and Dirichlet fractional Laplacians, the relevant Sobolev scales, and the lifting construction for inhomogeneous spectral boundary conditions. Section~3 proves the semigroup well-posedness results. Sections~4 and~5 treat the bistable equation and the fractional Gray--Scott system, respectively, including numerical experiments. Appendix~A records the DST-I/ETDRK4 discretization details.

\section{Preliminaries}
In this section, we introduce the spectral fractional Laplacian and its corresponding Sobolev spaces. When a bounded domain is considered, the spectral fractional Laplacian is significantly different from the Dirichlet fractional Laplacian. In the following, we will introduce and compare the properties of these two operators from various aspects. Let $\Omega\subset\RR^n$ $(n\in\mathbb{N}^+)$ be a bounded open Lipschitz domain with boundary $\partial\Omega$ and complement $\Omega_c=\RR^n\setminus\Omega$. We write $(\cdot,\cdot)$ for the $L^2(\Omega)$ inner product, $\|\cdot\|_2$ for the $L^2(\Omega)$ norm, and $\|\cdot\|_\infty$ for the $L^\infty(\Omega)$ norm. Let $\{(\lambda_k,\varphi_k)\}_{k\ge1}$ be the Dirichlet eigenpairs of the classical Laplacian $-\Delta$ on $\Omega$, ordered so that $0<\lambda_1\le\lambda_2\le\cdots<\infty$, with $\{\varphi_k\}$ an orthonormal basis of $L^2(\Omega)$.

\subsection{Spectral fractional Laplacian}

\begin{definition}\label{spec_def}
The spectral fractional Laplacian of $u$ in $\Omega$ is given by
\[
(-\Delta)^\alpha u(x)=\sum_{k=1}^\infty \lambda_k^\alpha\, u_k\,\varphi_k(x),
\qquad u_k=\int_\Omega u\,\varphi_k\,dx,
\]
for $\alpha\in(0,1)$. 
\end{definition}

This operator is exactly the fractional power of the Dirichlet Laplacian with zero boundary conditions. From this construction, $(-\Delta)^\alpha$ is positive, self-adjoint, and densely defined on $L^2(\Omega)$, see \cite{balakrishnan1960fractional}.

On a bounded domain $\Omega$, the following semigroup representation is equivalent to Definition~\ref{spec_def} (see \cite{cusimano2018discretizations}) and we introduce it because it directly links $(-\Delta)^\alpha$ to the heat flow, which is positivity preserving and $L^\infty$-contractive. The equivalence is immediate from the spectral theorem: expand $u=\sum_{k}u_k\varphi_k$, note that $e^{t\Delta}\varphi_k=e^{-\lambda_k t}\varphi_k$, and compute the integral to get $(-\Delta)^\alpha u=\sum_k \lambda_k^\alpha u_k\varphi_k$:
\begin{definition}
The spectral fractional Laplacian of $u$ in $\Omega$ admits the semigroup representation
\[
(-\Delta)^\alpha u(x)=\frac{1}{\Gamma(-\alpha)} \int_0^{\infty}\big(e^{t \Delta} u(x)-u(x)\big)\,\frac{dt}{t^{1+\alpha}},
\]
where $e^{t \Delta} u$ solves the classical heat equation on $\Omega$ with homogeneous Dirichlet boundary condition.
\end{definition}

\subsection{Fractional Sobolev spaces}

The fractional Sobolev space $H^\alpha(\Omega)$ of order $\alpha\in(0,1)$ is
\begin{equation}\label{eq:Gagliardo}
H^\alpha(\Omega)
=\Bigl\{u\in L^2(\Omega):\ [u]_{H^{\alpha}(\Omega)}^{2}:=\iint_{\Omega \times \Omega}
\frac{|u(x)-u(y)|^{2}}{|x-y|^{n+2\alpha}}\,dx\,dy< \infty\Bigr\},
\end{equation}
equipped with the norm $\|u\|_{H^\alpha(\Omega)}=\bigl(\|u\|_2^2+[u]_{H^\alpha(\Omega)}^2\bigr)^{1/2}$, which makes it a Hilbert space.

\begin{definition}\cite{frac_laplacian,Spectral_frac_def,Spectral_frac_def2}
Let $\alpha\in(0,1)$. Define the spectral fractional power space
\[
\mathbb{H}^\alpha(\Omega) = \left\{u=\sum_{k=1}^\infty u_k\varphi_k\in L^2(\Omega):\ \sum_{k=1}^\infty \lambda_k^{\alpha}|u_k|^2<\infty\right\},
\]
with norm $\|u\|^2_{\mathbb{H}^{\alpha}(\Omega)}=\|(-\Delta)^{\frac{\alpha}{2}}u\|_{2}^{2}=\sum_{k=1}^\infty \lambda_k^{\alpha}|u_k|^2$.
Moreover,
\[
\mathbb{H}^\alpha(\Omega)=
\begin{cases}
H^\alpha(\Omega), & \alpha\in\bigl(0,\tfrac{1}{2}\bigr),\\[3pt]
H^{\frac{1}{2}}_{00}(\Omega)=\Bigl\{u\in H^{\frac{1}{2}}(\Omega):\displaystyle\int_\Omega\frac{u(s)^2}{\mathrm{dist}(s,\partial\Omega)}\,ds<\infty\Bigr\}, & \alpha=\tfrac{1}{2},\\[5pt]
H_{0}^{\alpha}(\Omega)=\{u\in H^{\alpha}(\Omega): u|_{\partial\Omega}=0\}, & \alpha\in \bigl(\tfrac{1}{2},1\bigr).
\end{cases}
\]
\end{definition}

\subsection{Inhomogeneous spectral fractional Laplacian}

While homogeneous boundary conditions are often adopted for simplicity and are convenient for analysis, they are not always appropriate in applications, where external forcing, imposed profiles, or constraints naturally lead to nonzero boundary conditions. For this reason, it is necessary to discuss the inhomogeneous case and to clarify which parts of the homogeneous theory require modification. In particular, although spectral methods are well suited to homogeneous boundary conditions, the construction of the spectral fractional powers in Definition \ref{spec_def} cannot be directly extended to the case of nonzero boundary conditions. The reason is subsets of $C^\infty(\Omega)$ that satisfy a fixed nonzero boundary condition are no longer linear spaces, which prohibits the use of the spectral theorem. A standard way around this difficulty is to subtract a harmonic lifting of the boundary data and then apply the homogeneous spectral operator to the zero-trace part \cite{frac_laplacian,Spectral_frac_def}.

Let $G$ denote the harmonic lifting of the boundary datum $g$, namely
\[
-\Delta G=0 \quad \text{in } \Omega,
\qquad
G|_{\partial\Omega}=g.
\]
For the constant boundary value considered in this paper, we identify $g$ with the constant function on $\Omega$, so that the harmonic lift is simply $G\equiv g$. Following the lifting construction in \cite{Spectral_frac_def}, the inhomogeneous spectral fractional Laplacian is defined by
\[
\flpg u := \flp (u-G),
\qquad u-G\in \Hh(\Omega).
\]
Equivalently, for sufficiently regular $u$, it admits the spectral representation
\begin{equation}
    \flpg u(x)
=
\sum_{k=1}^{\infty}
\left(
\lambda_k^\alpha u_k
+
\lambda_k^{\alpha-1} u_{\partial\Omega,k}
\right)
\varphi_k(x),
\end{equation}
where
\(
u_k=\int_\Omega u\varphi_k\,dx\) and   
\(u_{\partial\Omega,k}
=
\int_{\partial\Omega}
g\,\frac{\partial\varphi_k}{\partial n}\,d\sigma,
\)
with $n$ denoting the outward unit normal. Indeed, if
\[
G_k=\int_\Omega G\varphi_k\,dx,
\]
then Green's identity gives
\[
G_k
=
-\lambda_k^{-1}
\int_{\partial\Omega}
g\,\frac{\partial\varphi_k}{\partial n}\,d\sigma,
\]
and hence
\[
\lambda_k^\alpha (u_k-G_k)
=
\lambda_k^\alpha u_k
+
\lambda_k^{\alpha-1}u_{\partial\Omega,k}.
\]
Thus, in the constant-boundary case used below,
\begin{equation}
    \flpg u = \flp(u-g).
    \label{theorem44_18}
\end{equation}
This identity is the form used in the analysis.

Cusimano et al. \cite{cusimano2018discretizations} give an equivalent heat-semigroup definition, denoted by $\mathcal{L}^\alpha_{\Omega,g}$. If $e^{t\Delta_g}u$ denotes the solution at time $t$ of the heat equation with boundary value $g$ and initial datum $u$, then
\[
\mathcal{L}^\alpha_{\Omega,g}u
=
-\frac{1}{\Gamma(-\alpha)}
\int_0^\infty
\bigl(e^{t\Delta_g}u-u\bigr)
\frac{dt}{t^{1+\alpha}} .
\]
For constant $g$, this agrees with the lifting formula above, since
\[
e^{t\Delta_g}u
=
g+e^{t\Delta_0}(u-g),
\]
and therefore
\[
\mathcal{L}^\alpha_{\Omega,g}u
=
\flp(u-g).
\]

\subsection{Comparison to the Dirichlet fractional Laplacian}

\begin{definition}
For $\alpha\in(0,1)$, the Dirichlet fractional Laplacian $\flp_D$ on $\Omega$ is
\[
\flp_D u(x)= C(n,\alpha)\,\mathrm{P.V.}\int_{\RR^n}\frac{u(x)-u(y)}{|x-y|^{n+2\alpha}}\,dy,\qquad x\in\Omega,
\]
where $u$ is extended by zero on $\Omega_c$. The natural energy space is
\[
\widetilde H^\alpha(\Omega)=\bigl\{u\in H^\alpha(\RR^n):\ u=0\ \text{a.e. in }\RR^n\setminus\Omega\bigr\}.
\]
The operator maps $u\in\widetilde H^\alpha(\Omega)$ to $(\widetilde H^\alpha(\Omega))^*$ in the weak sense.
\end{definition}

The operator $\flp_D$ is the generator of the symmetric $2\alpha$-stable process killed upon exiting $\Omega$, hence it is also referred to as the restricted fractional Laplacian by Lischke~\cite{frac_laplacian} and Vázquez~\cite{vazquez2014recent}, and it differs from the spectral one unless $\alpha=1$.

The domains of $\flp$ and $\flp_D$ as operators into $L^2(\Omega)$ are, respectively,
\[
\mathrm{Dom}(\flp)=\mathbb{H}^{2\alpha}(\Omega)
\quad\text{and}\quad
\mathrm{Dom}(\flp_D)=\widetilde H^{2\alpha}(\Omega),
\]
whenever these sets are interpreted in the graph-norm sense, see \cite{rel_spec_dir}. In particular, for the spectral operator the domain can be identified more concretely (see \cite{spaces_dir_spec_1,spaces_dir_spec_2,spaces_dir_spec_3}) as
\begin{equation}\label{eq:spectral-domain}
\mathrm{Dom}(\flp)=
\begin{cases}
H^{2\alpha}(\Omega), & \alpha\in(0,\tfrac{1}{4}),\\[2pt]
H^{1/2}_{00}(\Omega), & \alpha=\tfrac{1}{4},\\[2pt]
H_{0}^{2\alpha}(\Omega), & \alpha\in(\tfrac{1}{4},\tfrac{1}{2}],\\[2pt]
H^{2\alpha}(\Omega)\cap H^1_0(\Omega), & \alpha\in(\tfrac{1}{2},1).
\end{cases}
\end{equation}

The difference between $\mathrm{Dom}(\flp)$ and $\mathrm{Dom}(\flp_D)$ reflects the different boundary interactions: $\flp$ imposes a spectral (local) Dirichlet trace encoded via $H_0^\theta$ when $2\alpha>\tfrac12$, whereas $\flp_D$ enforces the nonlocal zero extension through $\widetilde H^{2\alpha}(\Omega)$.

\begin{lemma}\cite{stinga2019user}
For any $\alpha_1,\alpha_2>0$, we have $(-\Delta)^{\alpha_1}\circ(-\Delta)^{\alpha_2} = (-\Delta)^{\alpha_1+\alpha_2}$.
\end{lemma}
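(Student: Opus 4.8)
The plan is to prove the identity directly from the eigenfunction definition in Definition~\ref{spec_def}, which extends verbatim to any exponent $\alpha>0$ by setting $(-\Delta)^{\alpha}u=\sum_{k\ge1}\lambda_k^{\alpha}u_k\varphi_k$ on the domain $\dom((-\Delta)^{\alpha})=\mathbb{H}^{2\alpha}(\Omega)=\{u=\sum_k u_k\varphi_k\in L^2(\Omega):\sum_k\lambda_k^{2\alpha}|u_k|^2<\infty\}$. The strategy is to show that the two sides act identically on eigenfunction coefficients and then that their natural domains coincide, so that the stated operator equality holds in the graph-norm sense.

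First I would fix $u=\sum_{k\ge1}u_k\varphi_k$ and compute the composition coefficientwise. Applying $(-\Delta)^{\alpha_2}$ multiplies the $k$-th coefficient by $\lambda_k^{\alpha_2}$, producing $v:=(-\Delta)^{\alpha_2}u$ with $v_k=\lambda_k^{\alpha_2}u_k$; applying $(-\Delta)^{\alpha_1}$ then multiplies by $\lambda_k^{\alpha_1}$. Since the eigenvalues are positive reals, $\lambda_k^{\alpha_1}\lambda_k^{\alpha_2}=\lambda_k^{\alpha_1+\alpha_2}$, so the composed operator sends $u_k\mapsto\lambda_k^{\alpha_1+\alpha_2}u_k$, which is exactly the action of $(-\Delta)^{\alpha_1+\alpha_2}$. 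Because $\{\varphi_k\}$ is an orthonormal basis of $L^2(\Omega)$, equality of all Fourier coefficients forces equality of the two functions wherever both operators are defined.

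The step requiring care --- and essentially the only real obstacle --- is the bookkeeping of operator domains, since an identity of unbounded operators must include the claim that the domains agree, not merely that the formal actions match. I would verify this through the nested conditions: $u\in\dom((-\Delta)^{\alpha_2})$ requires $\sum_k\lambda_k^{2\alpha_2}|u_k|^2<\infty$, and then $v=(-\Delta)^{\alpha_2}u\in\dom((-\Delta)^{\alpha_1})$ requires $\sum_k\lambda_k^{2\alpha_1}|v_k|^2=\sum_k\lambda_k^{2(\alpha_1+\alpha_2)}|u_k|^2<\infty$. The latter condition is precisely $u\in\mathbb{H}^{2(\alpha_1+\alpha_2)}(\Omega)=\dom((-\Delta)^{\alpha_1+\alpha_2})$, and since $\lambda_k\ge\lambda_1>0$ yields $\lambda_k^{2\alpha_2}\le\lambda_1^{-2\alpha_1}\lambda_k^{2(\alpha_1+\alpha_2)}$, it automatically implies the former. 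Hence the natural domain of the composition coincides with $\dom((-\Delta)^{\alpha_1+\alpha_2})$, and on this common domain both sides agree coefficientwise.

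Finally, I would note that the self-adjointness and positivity of $(-\Delta)^{\alpha}$ on $L^2(\Omega)$ recorded just after Definition~\ref{spec_def} legitimize the spectral calculus used above, so no convergence issue beyond the square-summability conditions arises. An alternative route through the heat-semigroup representation is available but less direct, as it would require justifying an interchange of the subordination integrals; the eigenfunction argument sidesteps this entirely and makes the domain identification transparent.
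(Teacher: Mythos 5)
Your proof is correct, but note that the paper does not actually prove this lemma---it is imported verbatim from Stinga's user's guide \cite{stinga2019user}---so there is no in-paper argument to compare against; what you have written is the standard spectral-calculus proof that the cited reference relies on. Your argument is complete: the coefficientwise identity $\lambda_k^{\alpha_1}\lambda_k^{\alpha_2}u_k=\lambda_k^{\alpha_1+\alpha_2}u_k$ gives the formal equality, and you correctly address the only delicate point, namely that the domain of the composition is not strictly smaller than $\dom\bigl((-\Delta)^{\alpha_1+\alpha_2}\bigr)=\mathbb{H}^{2(\alpha_1+\alpha_2)}(\Omega)$, which follows from the bound $\lambda_k^{2\alpha_2}\le\lambda_1^{-2\alpha_1}\lambda_k^{2(\alpha_1+\alpha_2)}$ valid because $\lambda_k\ge\lambda_1>0$ on a bounded domain. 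Your closing remark is also apt: this eigenfunction route is exactly why the identity holds for the spectral operator yet fails for the Dirichlet (integral) fractional Laplacian on bounded domains, as the paper observes immediately after the lemma.
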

A similar semigroup law holds for the Dirichlet fractional Laplacian on $L^2(\RR^n)$ under the Fourier definition, see Di~Nezza--Palatucci--Valdinoci~\cite{hitchhiker}. However, it is not generally holds on a bounded domain since $\flp_D u\ne 0$ in $\Omega\backslash\RR^n$ even if $u \equiv0$. Based on this semigroup property, we obtain the following lemma.

\begin{lemma}\label{lemma_spectrum_compare}
For all $u, v \in \mathbb{H}^\alpha(\Omega)$, 
\[
\big((-\Delta)^\alpha u, v\big)=\big(u,(-\Delta)^\alpha v\big)=\big((-\Delta)^{\frac{\alpha}{2}} u,\,(-\Delta)^{\frac{\alpha}{2}} v\big).
\]
Furthermore, if $0<\beta\le\alpha$, the spectral inequality
\[
\|(-\Delta)^{\beta/2}v\|_2\le \lambda_1^{\frac{\beta-\alpha}{2}}\|(-\Delta)^{\alpha/2}v\|_2
\]
holds.
\end{lemma}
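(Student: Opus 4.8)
The plan is to reduce every quantity to a weighted $\ell^2$ sum over the Dirichlet spectrum by expanding $u=\sum_k u_k\varphi_k$ and $v=\sum_k v_k\varphi_k$ with respect to the orthonormal basis $\{\varphi_k\}$, and then to exploit the fact that $(-\Delta)^s$ acts diagonally as multiplication by $\lambda_k^s$ on the $k$-th coefficient (Definition~\ref{spec_def}). Membership of $u,v$ in $\mathbb{H}^{\alpha}(\Omega)$ is precisely the statement $\sum_k\lambda_k^\alpha|u_k|^2<\infty$ and $\sum_k\lambda_k^\alpha|v_k|^2<\infty$, which is the summability that will legitimize all the manipulations below.

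For the first identity I would compute each of the three inner products directly. By orthonormality of $\{\varphi_k\}$ one has $\big((-\Delta)^\alpha u,v\big)=\sum_k\lambda_k^\alpha u_kv_k$ and likewise $\big(u,(-\Delta)^\alpha v\big)=\sum_k\lambda_k^\alpha u_kv_k$, which already gives the self-adjoint (symmetry) equality. For the third expression, using $(-\Delta)^{\alpha/2}u=\sum_k\lambda_k^{\alpha/2}u_k\varphi_k$ together with the composition law $(-\Delta)^{\alpha/2}\circ(-\Delta)^{\alpha/2}=(-\Delta)^\alpha$ recorded above, I get
\[
\big((-\Delta)^{\alpha/2}u,(-\Delta)^{\alpha/2}v\big)=\sum_k\lambda_k^{\alpha/2}\lambda_k^{\alpha/2}u_kv_k=\sum_k\lambda_k^\alpha u_kv_k,
\]
so all three coincide with the common value $\sum_k\lambda_k^\alpha u_kv_k$. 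The one point requiring justification is that this series converges absolutely, so that the termwise identification is genuine rather than merely formal; this follows from Cauchy--Schwarz, since $\sum_k\lambda_k^\alpha|u_kv_k|=\sum_k(\lambda_k^{\alpha/2}|u_k|)(\lambda_k^{\alpha/2}|v_k|)\le\|u\|_{\mathbb{H}^{\alpha}(\Omega)}\,\|v\|_{\mathbb{H}^{\alpha}(\Omega)}<\infty$.

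For the spectral inequality I would argue at the level of the individual spectral weights. Since $0<\beta\le\alpha$ we have $\beta-\alpha\le0$, and since the eigenvalues are ordered with $\lambda_k\ge\lambda_1>0$, the map $x\mapsto x^{\beta-\alpha}$ is nonincreasing, giving $\lambda_k^{\beta-\alpha}\le\lambda_1^{\beta-\alpha}$ for every $k$. Multiplying by $\lambda_k^\alpha|v_k|^2\ge0$ and summing yields
\[
\|(-\Delta)^{\beta/2}v\|^2=\sum_k\lambda_k^\beta|v_k|^2=\sum_k\lambda_k^{\beta-\alpha}\lambda_k^\alpha|v_k|^2\le\lambda_1^{\beta-\alpha}\sum_k\lambda_k^\alpha|v_k|^2=\lambda_1^{\beta-\alpha}\|(-\Delta)^{\alpha/2}v\|^2,
\]
and taking square roots gives the claimed bound. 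I do not anticipate a genuine obstacle here: the whole lemma is spectral bookkeeping, and the only care needed is the absolute-convergence check in the first part and the observation that $\mathbb{H}^{\alpha}(\Omega)\subset\mathbb{H}^{\beta}(\Omega)$ for $\beta\le\alpha$, so that both norms in the inequality are finite and the estimate is not vacuous.
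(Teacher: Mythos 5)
Your proof is correct and takes essentially the same route as the paper, which disposes of the lemma in one line by noting that the identities and the inequality ``follow termwise from the eigenfunction expansion and from the monotonicity of $\lambda_k^{\alpha}$ with respect to $\alpha$'' --- precisely the diagonal computation you carry out. Your explicit Cauchy--Schwarz check that $\sum_k\lambda_k^\alpha|u_kv_k|\le\|u\|_{\mathbb{H}^{\alpha}(\Omega)}\|v\|_{\mathbb{H}^{\alpha}(\Omega)}$ supplies the absolute-convergence justification the paper leaves implicit.
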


The identities and the inequality follow termwise from the eigenfunction expansion and from the monotonicity of $\lambda_k^{\alpha}$ with respect to $\alpha$. It implies, for instance, that $(-\Delta)^{\theta}$ is smoothing of order $\beta-\alpha$ on $\mathbb{H}^\beta(\Omega)$. As a direct consequence of Lemma~\ref{lemma_spectrum_compare} we obtain:
\begin{corollary}\label{lemma_spectrum_split}
For all $u, v \in \mathbb{H}^\alpha(\Omega)$, 
\[
\big((-\Delta)^\alpha u, u\big)=\|(-\Delta)^{\frac{\alpha}{2}} u\|_2^2,\qquad 
\lambda_1^\alpha\|u\|_2^2\le\big((-\Delta)^{\alpha} u,u\big)\le\lambda_1^{-\alpha}\|u\|^2_{\mathbb{H}^\alpha(\Omega)}.
\]
\end{corollary}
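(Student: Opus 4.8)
The plan is to reduce all three assertions to the orthonormal eigenexpansion $u=\sum_{k\ge1}u_k\varphi_k$ and to Lemma~\ref{lemma_spectrum_compare}; no tool beyond diagonalization is required. First I would record the diagonal form of the quadratic form: by the termwise action of $(-\Delta)^\alpha$ in Definition~\ref{spec_def} together with Parseval's identity, $\big((-\Delta)^\alpha u,u\big)=\sum_{k\ge1}\lambda_k^\alpha|u_k|^2$. Since $\|(-\Delta)^{\alpha/2}u\|^2=\sum_{k\ge1}\lambda_k^\alpha|u_k|^2$ as well, the first identity $\big((-\Delta)^\alpha u,u\big)=\|(-\Delta)^{\alpha/2}u\|^2$ is precisely the $v=u$ case of the factorization identity in Lemma~\ref{lemma_spectrum_compare}; equivalently it is the definition of the $\mathbb H^\alpha$-seminorm, so that $\big((-\Delta)^\alpha u,u\big)=\|u\|_{\mathbb H^\alpha(\Omega)}^2$.

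For the two-sided estimate I would work termwise on the same nonnegative series. The lower bound uses only the ordering $0<\lambda_1\le\lambda_k$ and the monotonicity of $t\mapsto t^\alpha$: since $\lambda_k^\alpha\ge\lambda_1^\alpha$ for every $k$,
\[
\big((-\Delta)^\alpha u,u\big)=\sum_{k\ge1}\lambda_k^\alpha|u_k|^2\ge\lambda_1^\alpha\sum_{k\ge1}|u_k|^2=\lambda_1^\alpha\|u\|^2 .
\]
For the upper bound I would apply the spectral inequality of Lemma~\ref{lemma_spectrum_compare} to $v=u$ with the two orders $\alpha$ and $2\alpha$ taking the roles of $\beta$ and $\alpha$ there (admissible since $0<\alpha\le2\alpha$ for all $\alpha\in(0,1)$), obtaining $\|(-\Delta)^{\alpha/2}u\|\le\lambda_1^{-\alpha/2}\|(-\Delta)^\alpha u\|$. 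Squaring and using the first identity then gives
\[
\big((-\Delta)^\alpha u,u\big)=\|(-\Delta)^{\alpha/2}u\|^2\le\lambda_1^{-\alpha}\,\|(-\Delta)^\alpha u\|^2=\lambda_1^{-\alpha}\,\|u\|_{\mathbb H^{2\alpha}(\Omega)}^2 .
\]

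I do not anticipate a genuine analytic obstacle, since the corollary is a specialization of the preceding lemma; the care is entirely in the bookkeeping. The key point I would emphasize is that the natural norm on the right of the upper bound is the higher $\mathbb H^{2\alpha}$-seminorm $\|(-\Delta)^\alpha u\|$, not the $\mathbb H^\alpha$-seminorm: because $\big((-\Delta)^\alpha u,u\big)$ equals $\|u\|_{\mathbb H^\alpha(\Omega)}^2$ \emph{identically}, any nontrivial majorant must climb one rung up the spectral scale, and the right-hand side is finite exactly when $u\in\mathbb H^{2\alpha}(\Omega)=\dom((-\Delta)^\alpha)$ (the estimate being vacuous otherwise). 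The only other thing to verify is the admissibility of the exponent substitution in Lemma~\ref{lemma_spectrum_compare}, i.e. $0<\alpha\le2\alpha$, which holds throughout $\alpha\in(0,1)$. With these conventions fixed, all three claims follow immediately from the diagonalization and Lemma~\ref{lemma_spectrum_compare}.
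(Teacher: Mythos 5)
Your proof is correct, and it follows the same route the paper intends: the corollary is read off termwise from the eigenfunction expansion, i.e.\ it is the $v=u$ specialization of Lemma~\ref{lemma_spectrum_compare} (the paper gives no separate argument, presenting it as a direct consequence). The one place you depart from the printed statement is the upper bound, and you are right to do so. As printed, the majorant is $\lambda_1^{-\alpha}\|u\|^2_{\mathbb{H}^{\alpha}(\Omega)}$; but by the first identity the left-hand side \emph{equals} $\|u\|^2_{\mathbb{H}^{\alpha}(\Omega)}$, so the printed inequality reduces to $1\le\lambda_1^{-\alpha}$, which is trivially true when $\lambda_1\le 1$ and false for every nonzero $u$ when $\lambda_1>1$ (e.g.\ on a domain of small measure). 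The intended right-hand side is the one you derive, $\lambda_1^{-\alpha}\|(-\Delta)^{\alpha}u\|^2=\lambda_1^{-\alpha}\|u\|^2_{\mathbb{H}^{2\alpha}(\Omega)}$, obtained from the spectral inequality of Lemma~\ref{lemma_spectrum_compare} applied with the pair of orders $(\alpha,2\alpha)$; your two caveats---that the substitution $0<\alpha\le 2\alpha$ is admissible because the termwise proof of the lemma never needs the orders to lie in $(0,1)$, and that the bound is vacuous unless $u\in\mathbb{H}^{2\alpha}(\Omega)=\mathrm{Dom}((-\Delta)^\alpha)$ so that the right-hand side is finite---are exactly the right bookkeeping. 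This correction is harmless downstream: the only part of the corollary invoked later in the paper (the Gr\"onwall estimate in Theorem~\ref{global existence} and the coercivity bound \eqref{17}) is the lower bound $\lambda_1^{\alpha}\|u\|^2\le\big((-\Delta)^{\alpha}u,u\big)$, which you prove exactly as the paper does.
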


\begin{proposition}\cite{dirichlet_spectral}
Let $\lambda_k$ and $\zeta_k$ be the $k$-th eigenvalues of the standard Laplacian $-\Delta$ and the Dirichlet-0 fractional Laplacian $\flp_D$, respectively. For any convex $\Omega\subset\RR^n$ and $\alpha\in(0,1)$,
\[
\frac{1}{2}\lambda_k^\alpha\ \le\ \zeta_k\ \le\ \lambda_k^\alpha.\label{compare_2_Laplacians}
\]
\end{proposition}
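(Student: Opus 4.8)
The plan is to derive both inequalities from the Courant--Fischer (min--max) characterization, after recasting everything in terms of the two quadratic forms. Both $\flp$ and $\flp_D$ are nonnegative, self-adjoint operators with compact resolvent on $L^2(\Omega)$, so
\[
\lambda_k^\alpha=\min_{\substack{V\subset\mathbb{H}^\alpha(\Omega)\\\dim V=k}}\ \max_{u\in V\setminus\{0\}}\frac{\mathcal{E}_{\mathrm{sp}}(u)}{\|u\|^2},\qquad \zeta_k=\min_{\substack{V\subset\widetilde H^\alpha(\Omega)\\\dim V=k}}\ \max_{u\in V\setminus\{0\}}\frac{\mathcal{E}_{D}(u)}{\|u\|^2},
\]
where $\mathcal{E}_{\mathrm{sp}}(u)=\|(-\Delta)^{\alpha/2}u\|^2=\sum_k\lambda_k^\alpha u_k^2$ is the spectral form (Corollary~\ref{lemma_spectrum_split}) and $\mathcal{E}_D(u)=\tfrac{C(n,\alpha)}{2}\iint_{\RR^n\times\RR^n}|\tilde u(x)-\tilde u(y)|^2|x-y|^{-n-2\alpha}\,dx\,dy$ is the Gagliardo form of the zero-extension $\tilde u$. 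Since $\mathbb{H}^\alpha(\Omega)=\widetilde H^\alpha(\Omega)$ for every $\alpha\in(0,1)$, it suffices to prove the two-sided form inequality $\mathcal{E}_D(u)\le\mathcal{E}_{\mathrm{sp}}(u)\le 2\,\mathcal{E}_D(u)$ on this common space; the min--max principle then yields $\tfrac12\lambda_k^\alpha\le\zeta_k\le\lambda_k^\alpha$ termwise.

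For the upper bound $\zeta_k\le\lambda_k^\alpha$ (valid for any bounded $\Omega$) I would use Jensen's operator inequality. Let $K\colon L^2(\Omega)\to L^2(\RR^n)$ be extension by zero; it is an isometry ($K^*K=I$), and a direct form computation on $H_0^1(\Omega)$ gives $-\Delta_\Omega=K^*(-\Delta_{\RR^n})K$, i.e.\ the Dirichlet Laplacian is the compression of the free Laplacian. Because $t\mapsto t^\alpha$ is operator concave on $[0,\infty)$ for $\alpha\in(0,1)$ and vanishes at $0$, the Hansen--Pedersen form of Jensen's inequality yields $\bigl(K^*(-\Delta_{\RR^n})K\bigr)^\alpha\ge K^*(-\Delta_{\RR^n})^\alpha K$, that is $\flp\ge K^*(-\Delta_{\RR^n})^\alpha K$ in the form sense. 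Pairing with $u$ gives exactly $\mathcal{E}_{\mathrm{sp}}(u)\ge\mathcal{E}_D(u)$ for all $u$, including sign-changing ones, which is the feature that a naive pointwise heat-kernel comparison cannot capture.

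For the lower bound $\tfrac12\lambda_k^\alpha\le\zeta_k$ I would pass through the subordination (Balakrishnan) representation. Writing $p_t^\Omega$ and $p_t^{\RR^n}$ for the Dirichlet and free heat kernels, the Bernstein formula gives
\[
\mathcal{E}_{\mathrm{sp}}(u)-\mathcal{E}_D(u)=\frac{\alpha}{\Gamma(1-\alpha)}\int_0^\infty\frac{dt}{t^{1+\alpha}}\iint_{\Omega\times\Omega}\bigl[p_t^{\RR^n}(x,y)-p_t^\Omega(x,y)\bigr]u(x)u(y)\,dx\,dy,
\]
and the kernel difference $h_t:=p_t^{\RR^n}-p_t^\Omega\ge0$ by domain monotonicity. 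Since $h_t\ge0$, the elementary bound $u(x)u(y)\le\tfrac12(u(x)^2+u(y)^2)$ controls the cross term by a pure killing term, $\mathcal{E}_{\mathrm{sp}}(u)-\mathcal{E}_D(u)\le\int_\Omega u(x)^2\kappa(x)\,dx$, with $\kappa(x)=\tfrac{\alpha}{\Gamma(1-\alpha)}\int_0^\infty t^{-1-\alpha}P_x(\tau_\Omega\le t,\ B_t\in\Omega)\,dt$, the probability that Brownian motion started at $x$ has left $\Omega$ but returned by time $t$. On the other hand the restricted form dominates its own killing density, $\int_\Omega u^2\rho\le\mathcal{E}_D(u)$ with $\rho(x)=\tfrac{\alpha}{\Gamma(1-\alpha)}\int_0^\infty t^{-1-\alpha}P_x(B_t\notin\Omega)\,dt$, so everything reduces to the pointwise comparison $\kappa(x)\le\rho(x)$; granting it gives $\mathcal{E}_{\mathrm{sp}}\le 2\mathcal{E}_D$ and hence the claim.

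The main obstacle is precisely this last comparison $\kappa\le\rho$, which is where convexity and the constant $\tfrac12$ enter. By the strong Markov property at the first exit time $\tau_\Omega$ it reduces to showing, for every boundary point $z\in\partial\Omega$ and every $r>0$, that the re-entry probability $P_z(B_r\in\Omega)$ is at most the stay-out probability $P_z(B_r\notin\Omega)$. For convex $\Omega$ I would take a supporting hyperplane at $z$, so that $\Omega$ lies in a half-space $H$ with $z\in\partial H$; since Brownian motion started on $\partial H$ splits its mass evenly, $P_z(B_r\in\Omega)\le P_z(B_r\in H)=\tfrac12\le P_z(B_r\in H^c)\le P_z(B_r\notin\Omega)$, giving $\kappa\le\rho$. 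This step genuinely uses convexity, and the symmetric $\tfrac12$/$\tfrac12$ split is the source of the factor $2$; for nonconvex domains the argument degrades and the constant must worsen. The remaining work is technical: justifying Jensen's inequality and the identity $-\Delta_\Omega=K^*(-\Delta_{\RR^n})K$ for the unbounded operators at the level of form domains, and verifying the Fubini and absolute-convergence conditions in the Bernstein integrals.
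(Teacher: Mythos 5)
The paper does not actually prove this proposition: it is stated as a citation to \cite{dirichlet_spectral}, so there is no internal proof to compare against. Judged on its own merits, your argument is correct, and it is essentially a reconstruction of the argument in the cited literature (Chen--Song type two-sided eigenvalue estimates for subordinate killed processes). The skeleton is sound: the form domains $\mathbb{H}^\alpha(\Omega)$ and $\widetilde H^\alpha(\Omega)$ do coincide as sets for bounded Lipschitz (in particular convex) domains, so Courant--Fischer reduces everything to the two-sided form inequality $\mathcal{E}_D\le\mathcal{E}_{\mathrm{sp}}\le 2\mathcal{E}_D$. Your upper bound via the Hansen--Pedersen/Choi--Davis operator Jensen inequality applied to the compression $-\Delta_\Omega=K^*(-\Delta_{\RR^n})K$ is the right mechanism and, as you note, needs no convexity; the unbounded-operator technicality you flag is real but handled in the standard way by proving the resolvent inequality $\bigl(K^*AK+s\bigr)^{-1}\le K^*(A+s)^{-1}K$ for each $s>0$ and integrating against the representing measure of $t\mapsto t^\alpha$. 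Your lower bound is also correct in every step I can check: the Bernstein representation of the difference of forms, nonnegativity of $h_t=p_t^{\RR^n}-p_t^\Omega$ by domain monotonicity, the symmetrization $u(x)u(y)\le\tfrac12\bigl(u(x)^2+u(y)^2\bigr)$ reducing to the killing densities $\kappa\le\rho$, the strong Markov reduction to boundary starting points, and the supporting-hyperplane estimate $P_z(B_r\in\Omega)\le\tfrac12\le P_z(B_r\notin\Omega)$, which is exactly where convexity and the constant $\tfrac12$ enter. Two points deserve explicit care in a polished write-up: (i) the identification of the Gagliardo form of the zero extension with the Bernstein/heat-semigroup form of $(-\Delta_{\RR^n})^\alpha$ (Plancherel plus the subordination identity, with the constant $\alpha/\Gamma(1-\alpha)$, which you state correctly), and (ii) the inequality $\int_\Omega u^2\rho\,dx\le\mathcal{E}_D(u)$, which follows by discarding the nonnegative interior Gagliardo interaction and keeping only the $\Omega\times\Omega^c$ part; both are routine. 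So the proposal is complete modulo standard technical verification, and it supplies a proof the paper itself omits.
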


Thus, on convex domains the Dirichlet  spectrum is comparable to the spectral one, uniformly in $k$. This is frequently used to transfer a priori estimates between the two operators.

\section{Well-posedness for RNDE \eqref{inhomo}}

Using the harmonic lifting construction introduced above, we reduce \eqref{inhomo} to a homogeneous problem. Let $G$ be the harmonic lifting of $g$, for the constant boundary value considered in this paper, $G\equiv g$. With $w=u-G$, the equation becomes
\[
\partial_t w + \epsilon^2\flp w = \widetilde{\Nn}(w), \qquad w|_{\partial\Omega}=0,
\]
where $\widetilde{\Nn}(w):=\Nn(w+G)$ collecting the shifted nonlinearity. For convenience, we retain the notation $u$ for the shifted variable and formulate the reduced system as:
\begin{equation}
\left\{\begin{aligned}
& \partial_t u + \epsilon^2\flp u = \Nn(u), \qquad x\in\Omega,\ t>0,\\
& u|_{\partial\Omega}=0.
\end{aligned}\right.\label{ori_eq}
\end{equation}
Since $(-\Delta)^\alpha$ is $m$-accretive and densely defined on $L^2(\Omega)$, the operator $-(-\Delta)^\alpha$ generates a contraction semigroup $\{Q(t)\}_{t\ge0}$. Specifically, for any $t\ge0$, we have$$\|Q(t)\|_{L^2(\Omega)\to L^2(\Omega)}^2 = \sup_{u\ne 0}\frac{\sum_{k=1}^{\infty} \lambda_k^{2\alpha} e^{-2\epsilon^2 t\lambda_k^{\alpha}} |u_k|^{2}}{\sum_{k=1}^{\infty} \lambda_k^{2\alpha} |u_k|^{2}}=\sup_{k\ge 1} e^{-2\epsilon^2 t\lambda_k^{\alpha}}=e^{-2\epsilon^2 t\lambda_1^{\alpha}}\le 1.$$Moreover, $Q(t)$ is an analytic semigroup \cite{sectorial_and_analytic,haase_ana_semigroups}. More generally, on any Banach space $Y$, $-(-\Delta)^\alpha$ serves as the infinitesimal generator of a positive contraction semigroup (see, e.g., \cite{balakrishnan1960fractional}).
Suppose $\Nn$ is Lipschitz continuous on bounded subsets of a Banach space $Y$, i.e., for all $R>0$, there exists a constant $L_R$ such
 that
 \begin{equation}
     \|\Nn(u)-\Nn(v)\|_Y\le L_R \|u-v\|_Y,\  u,v \in B_R,
 \end{equation}
 where $B_R$ is the ball defined by $B_R:=\{u\in Y:\|u\|_{Y}\le R\}$ of center 0 and of radius $R$. Here we consider the case that $\Nn$ is a superposition operator, that is, there exists a locally Lipschitz
continuous function $f\in C(\RR^n,\RR^n)$ such that $\Nn(u)(x) = f(u(x))$ with $f(0) = 0$ for $u\in Y$ and  $x\in\Omega$.

 With $-\epsilon^2\flp$ and $Q(t)=e^{-\epsilon^2 \flp t}$ defined, we then can construct the solution $u(t)$ of RNDE \eqref{ori_eq} by using the \textit{variation-of-constants} (Duhamel) formula. Then, we have the following theorem:

\begin{theorem}\label{local_mild_exist}
    Suppose $u_0\in C_0(\Omega):=\{u_0\in C(\bar\Omega):u_0 = 0\text{ on }\partial \Omega\}$ and $\Nn: C_0(\Omega)\to C_0(\Omega)$ is locally Lipschitz on bounded sets of $C_0(\Omega)$. Then, there exists a unique $u\in C([0,T],C_0(\Omega))$ given by
    \begin{equation}
        u(t) = Q(t) u_0 + \int^t_0 Q(t-s)\Nn(u(s))ds,\ \ t\in[0,T],\label{mild_sol}
    \end{equation}
    where $Q(t)$ is an analytic contraction semigroup generated by $-\epsilon^2\flp$.
    \begin{proof}
        Suppose $M\ge \|u_0\|_\infty$. Let $R=2M+\|\Nn(0)\|_\infty$ and $L_R$ be a Lipschitz constant of $\Nn$ on the closed ball $B_R:=\{v\in C_0(\Omega):\|v\|_\infty\le R\}$. Choose $T_M>0$ such that 
        \begin{equation} 
        T_M = \frac{1}{2L_R+2}. \label{TR_condition} \end{equation}
        Define 
        \[Y = \{u\in C([0,T_M],C_0(\Omega)):\|u(t)\|_\infty\le R,\ \forall t\in[0,T_M] \}\]
        equipped with the distance generated by the norm of $C([0,T_M],C_0(\Omega))$, that is,
        \[d(u,v) = \max_{t\in[0,T_M]}\|u(t)-v(t)\|_\infty,\ \text{ for }u,v\in Y.\]
        Then, $(Y,d)$ is a complete metric space, since $C([0,T_M],C_0(\Omega))$ is a Banach space. 
        Since $Q(t)$, generated by $-\epsilon^2\flp$, is a contraction semigroup and $\Nn$ is locally Lipschitz, the following fixed-point map is well-defined on $Y$ for sufficiently small $T_M$.
        For all $u\in Y$, we define $(\Phi u)\in C([0,T_M],C_0(\Omega))$  by
        \begin{equation}
            (\Phi u)(t) = Q(t)u_0 + \int^t_0 Q(t-s) \Nn(u(s))ds,\ \forall t\in[0, T_M].
        \end{equation}
        Then we find
        \[\begin{aligned}
            \|(\Phi u)(t)\|_\infty&\leq \|Q(t)u_0\|_\infty+\int^t_0\|Q(t-s)\Nn(u(s))\|_\infty ds\leq \|u_0\|_\infty + \int^t_0\|\Nn(u(s))\|_\infty ds\\
            &\le \|u_0\|_\infty + \int^t_0 \|\Nn(0)\|_\infty + M L_R ds\le M + t\frac{M+\|\Nn(0)\|_\infty}{T_M}\le R.
        \end{aligned}\]
        Consequently, we have $\Phi: Y\to Y$. Furthermore, it follows that
        \[\|(\Phi u)(t)-(\Phi v)(t)\|_\infty\le L_R\int^t_0\|u(s)-v(s)\|_\infty ds\le T_M L_R d(u,v)\le \frac{1}{2} d(u,v).\]
        Thus, $\Phi$ is a contraction in $Y$ with Lipschitz constant $\frac12$, and so $\Phi$ has a unique fixed point $u\in Y$ by the Banach fixed-point Theorem. Let $T=T_M$ we finish the proof.
    \end{proof}
\end{theorem}

This is a standard application of semigroup methods in $C_0(\Omega)$, which treats the problem as an abstract semilinear evolution equation in a Banach space. For later use, we recall the following general result due to Cazenave \cite{existence_mild},  which describes the maximal interval of existence and the blow-up alternative in this abstract setting.

\begin{proposition}\label{global_mild}
   Let $Y$ be a Banach space. There exists a function $T: Y \to (0, \infty]$ with the following
properties: for all $u_0 \in Y$, there exists $u \in C([0, T(u_0)), Y)$ such that for all
$0 < T < T(u_0)$, u is the unique solution of \eqref{mild_sol} in $C([0, T], Y)$. In addition,
\begin{equation}
    2L(\|\Nn(0)\|_Y + 2\|u(t)\|_Y) \ge\frac{1}{T(u_0)-t}-2,~~\forall t \in [0, T(u_0)).\label{10}
\end{equation}
In particular, we have (i) $T(u_0) = \infty$ or (ii) $T(u_0)<\infty$ and $\lim_{t\to T(u_0)}\|u(t)\|_Y=\infty$.
\end{proposition}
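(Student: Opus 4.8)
The plan is to build the maximal solution by patching together the local solutions supplied by Theorem~\ref{local_mild_exist} and then to read off the quantitative bound \eqref{10} directly from the explicit local existence time $T_R$; the blow-up dichotomy will follow from \eqref{10} as a corollary. First I would secure \emph{global} uniqueness on any common interval, since this is what makes the definition of $T(u_0)$ unambiguous and also what licenses the gluing. If $u,v\in C([0,T],Y)$ both solve \eqref{mild_sol} with $u(0)=v(0)=u_0$, then by continuity they are bounded by some $M$ on $[0,T]$; subtracting the two Duhamel formulas, using the contractivity $\|Q(t-s)\|\le 1$ and the Lipschitz constant $L_M$ on the ball $B_M$, one gets
\[
\|u(t)-v(t)\|_Y\le L_M\int_0^t\|u(s)-v(s)\|_Y\,ds ,
\]
and Gronwall's inequality forces $u\equiv v$.

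With uniqueness in hand I would define $T(u_0)=\sup\{T>0:\eqref{mild_sol}\text{ admits a solution in }C([0,T],Y)\}$, which is strictly positive by Theorem~\ref{local_mild_exist}. Uniqueness guarantees that any two such solutions agree on the overlap of their intervals, so they assemble into a single maximal solution $u\in C([0,T(u_0)),Y)$. The heart of the argument is a \emph{restart} step: fixing $t\in[0,T(u_0))$, I would apply Theorem~\ref{local_mild_exist} with initial datum $u(t)$ and radius $R=\|u(t)\|_Y$, obtaining a solution on a time interval of length at least $T_R=\bigl[2L(2R+\|\Nn(0)\|_Y)+2\bigr]^{-1}$, and then verify via the semigroup law $Q(t'-s)=Q(t'-t)Q(t-s)$ and a splitting of the Duhamel integral at $s=t$ that the concatenation of $u$ with this restarted trajectory again satisfies \eqref{mild_sol} on the longer interval. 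By maximality this yields $T(u_0)\ge t+T_{\|u(t)\|_Y}$, i.e.
\[
T(u_0)-t\ \ge\ \frac{1}{2L\bigl(2\|u(t)\|_Y+\|\Nn(0)\|_Y\bigr)+2}.
\]
Taking reciprocals of both (positive) sides and rearranging reproduces \eqref{10} exactly.

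Finally the dichotomy is immediate from \eqref{10}. If $T(u_0)=\infty$ we are in case (i). If $T(u_0)<\infty$, then as $t\to T(u_0)^-$ the quantity $\tfrac{1}{T(u_0)-t}$ diverges, so the right-hand side of \eqref{10} forces $2L(\|\Nn(0)\|_Y+2\|u(t)\|_Y)\to\infty$, whence $\|u(t)\|_Y\to\infty$, which is case (ii).

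I expect the main obstacle to be the concatenation/verification step, since gluing two mild solutions into one requires rewriting the Duhamel integral over $[0,t']$ as the sum of a piece over $[0,t]$ (which, after applying $Q(t'-t)$, reconstructs $Q(t')u_0$ plus its source term) and a piece over $[t,t']$ (which matches the restarted Duhamel formula), and this manipulation leans on the semigroup property together with continuity in time. A secondary, more bookkeeping-type difficulty is the dependence of the Lipschitz constant on the restart radius: $T_R$ is governed by $L=L_K$ with $K=2R+\|\Nn(0)\|_Y$ growing with $R$, so to keep \eqref{10} in the stated closed form one must fix $L$ as the Lipschitz constant on the ball actually containing the relevant trajectory and invoke monotonicity of $R\mapsto L_R$; once this is pinned down, the blow-up conclusion in the strong $\lim$ form (rather than merely $\limsup$) follows without further work.
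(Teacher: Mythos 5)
Your proposal is correct, and it is essentially the canonical argument: the paper itself supplies no proof of Proposition~\ref{global_mild} (it is recalled from Cazenave's book), and your route — Gronwall uniqueness, defining $T(u_0)$ as a supremum and gluing by uniqueness, then restarting Theorem~\ref{local_mild_exist} at time $t$ with radius $R=\|u(t)\|_Y$ to get $T(u_0)-t\ge T_{\|u(t)\|_Y}$ and hence \eqref{10}, with the blow-up alternative read off from \eqref{10} — is exactly the proof in that reference. Your closing remarks correctly identify the only delicate points (the semigroup-property computation behind concatenation, and reading $L(\cdot)$ in \eqref{10} as the monotone radius-dependent Lipschitz constant, whose local finiteness is what upgrades the conclusion to a genuine limit), so nothing is missing.
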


 If we further assume that $u\in C([0,T],\Hh(\Omega))\cap C^1([0,T],C_0(\Omega))$, then it can be checked that \eqref{mild_sol} solves \eqref{ori_eq}. Therefore, we call the solution written in \eqref{mild_sol} a mild solution.

\begin{theorem}\label{global existence}
    Let $u_0\in C_0(\Omega)$ and $u\in C([0,T],C_0(\Omega))$. Assume that $\Nn:C_0(\Omega)\to C_0(\Omega)$ is locally Lipschitz continuous. Then $u$ solves \eqref{ori_eq} iff $u$ satisfies \eqref{mild_sol}. 
    Furthermore, if there exist $m,K>0$ such that $u\Nn(u)\le K u^2$ for all $|u|\ge m$, then the weak solution $u$ of \eqref{mild_sol} exists for $\forall u_0\in C_0(\Omega)$ with $t\in(0,\infty)$.
    \begin{proof}
($\Rightarrow$) Assume \(u\) satisfies \eqref{ori_eq}, then for fixed \(0<t<T\), set
\[
\psi(s)=Q(t-s)\,u(s),\quad 0\le s\le t.
\]
Since \(Q(t) = e^{-\epsilon^2\flp t}\) is analytic and \(u\in C^{1}((0,T],C_0(\Omega))\cap C([0,T],\Hh(\Omega))\), then 
\(\psi\in C^{1}([0,t],\Hh(\Omega))\) and
\[
\psi'(s)
=\epsilon^2\flp Q(t-s)u(s)
+Q(t-s)\bigl[\partial_{s}u(s)\bigr]
=Q(t-s)\,\Nn\bigl(u(s)\bigr).
\]
Integrating from \(0\) to \(t\) and using \(Q(0)=I\) yields
\[
u(t)-Q(t)u_{0}
=\int_{0}^{t}Q(t-s)\,\Nn\bigl(u(s)\bigr)\,ds.
\]
($\Leftarrow$) 
Assume \(u\) satisfies \eqref{mild_sol}. Since $\Delta$ is a generator of a analytic semigroup on $C_0(\Omega)$, the subordination theorem implies that $Q(t)$ is a contraction analytic semigroup on $C_0(\Omega)$ \cite{sectorial_and_analytic,pazy_ana_semigroups} and we have the following estimation:

\[
\|\epsilon^2\flp Q(s)\|_{C_0(\Omega)\to C_0(\Omega)}
\le \frac{C}{s},
\qquad s>0.
\]
Therefore, for $t>0$ and $h>0$,
\[
\begin{aligned}
\|Q(t+h)-Q(t)\|_{C_0(\Omega)\to C_0(\Omega)}
&\le\epsilon^2\int_t^{t+h}
\|\flp Q(s)\|_{C_0(\Omega)\to C_0(\Omega)}\,ds \le
\epsilon^2\int_t^{t+h}\frac{C}{s}\,ds \\
&= \epsilon^2\log (1+h/t)\le  \epsilon^2 C_\theta t^{-\theta}h^\theta,
\end{aligned}
\]
where we used $\log(1+h/t)\le C_\theta (h/t)^\theta$ and $C_\theta$ is a constant with $0<\theta<1$.

 Let $f(t)=\Nn(u(t))$, then from the local Lipschitzness of $\Nn$ we conclude that $f\in C([0,T],C_0(\Omega))$.
Fix $\theta$ and $\delta \in(0, T)$ and denote that $M:=\sup_{t\in[0,T]}\|f(t)\|_\infty$. For $t \in[\delta, T]$ and $h>0$ with $t+h \leq T$, we have:
\[\begin{aligned}
    \|u(t+h)-u(t)\|_\infty& \le\|(Q(t+h)-Q(t)) u_0\|_\infty + \|\int_0^t(Q(t+h-s)-Q(t-s)) \Nn(u(s)) d s\|_\infty\\
&+\|\int_t^{t+h} Q(t+h-s) \Nn(u(s)) d s\|_\infty\\
&\le \epsilon^2 C_\theta h^\theta t^{-\theta}\left\|u_0\right\|_\infty + \epsilon^2\int_0^t C_\theta h^\theta(t-s)^{-\theta}\|f(s)\|_\infty d s  +  \int_t^{t+h}\|f(s)\|_\infty d s\\
&\le \epsilon^2 C_\theta \delta^{-\theta} h^\theta\left\|u_0\right\|_\infty  + \epsilon^2\frac{C_\theta M}{1-\theta} h^\theta t^{1-\theta}  + Mh.
\end{aligned}\]
Thus, for a sufficient small $h\ll1$, there exists a constant $C=C\left(\epsilon^2,\theta, \delta, T, M, u_0\right)$ with
\[
\|u(t+h)-u(t)\|_\infty \leq C h^\theta, \quad t \in[\delta, T].
\]
This gives $u \in C^{0, \theta}([\delta, T] , C_0(\Omega))$. By local Lipschitzness of $\Nn$, there exists $R=\sup_{t\in[\delta,T]}\|u(t)\|_\infty$
such that for all $s,t\in [\delta,T]$,
\begin{equation}
    \|f(t)-f(s)\|_\infty =\|\Nn(u(t))-\Nn(u(s))\|_\infty \leq L_R\|u(t)-u(s)\|_\infty \leq L_R C|t-s|^\theta,\label{bound_Nn}
\end{equation}
where $L_R$ the Lipschitz constant. Hence $\Nn(u(\cdot)) \in C^{0, \theta}((0, T] , C_0(\Omega))$.

Denote that $W(t):=\int_0^t Q(t-s) \Nn(u(s)) d s$, so we have
\begin{equation}
    -\epsilon^2\flp W(t)=-\epsilon^2\int_0^t \flp Q(t-s)(\Nn(u(s))-\Nn(u(t))) d s - \epsilon^2\Nn(u(t))\int_0^t \flp Q(t-s) d s \label{dominated}
\end{equation}
The second term lies in $C_0(\Omega)$ since $\epsilon^2\int_0^t \flp Q(t-s) d s=I-Q(t)$. For the first term, we have 
$$
\|-\epsilon^2\flp Q(t-s)(\Nn(u(s))-\Nn(u(t)))\|_\infty \leq C_1(t-s)^{-1} \cdot C_2|t-s|^\theta=C_1 C_2(t-s)^{\theta-1},\quad t \in[\delta, T],
$$
which is integrable near $s=t$ since $\theta \in(0,1)$. Hence $W \in \Hh(\Omega)$, and therefore
\[
u(t)=Q(t) u_0+W(t) \in \Hh(\Omega), \quad t \in(0, T].
\]

Moreover, using dominated convergence in \eqref{dominated} (the same bound $(t-s)^{\theta-1}$ is integrable and $\Nn(u(s)) \rightarrow \Nn(u(t))$ as $s \rightarrow t)$, we get
$$
t \mapsto u(t) \in C((0, T] , \Hh(\Omega)).
$$

To show that $W\in C^1([\delta,T],C_0(\Omega))$ and 
\[W'(t) = -\epsilon^2\int^t_0 \flp Q(t-s)\Nn(u(s))ds + \Nn(u(t)),\]
we start with the existence of 
\begin{equation}
    \frac{W(t+h)-W(t)}{h}=\int_0^t \frac{Q(t+h-s)-Q(t-s)}{h} \Nn(u(s)) d s+\frac{1}{h} \int_t^{t+h} Q(t+h-s) \Nn(u(s)) d s.\label{8}
\end{equation}
Since $\frac{d}{dt}Q(t) = -\epsilon^2\flp Q(t)$, then by changing variable $\tau=t-s$, we write \eqref{8} as:
\[\begin{aligned}
\frac{W(t+h)-W(t)}{h}&= \int_0^t\left(\int_0^1 -\epsilon^2\flp Q(\tau+\eta h) d \eta\right)(f(t)-f(t-\tau)) d \tau \\
   & - \left(\int_0^t \int_0^1 -\epsilon^2\flp Q(\tau+\eta h) d \eta d \tau\right) f(t)
    +\frac{1}{h} \int_t^{t+h} Q(t+h-s) f(s) d s.
\end{aligned}\]
For the first integral, we have $\left\|\int_0^1 -\epsilon^2\flp Q(\tau+\eta h) d \eta\right\|_{C_0(\Omega)\to C_0(\Omega)} \leq C \tau^{-1}$, then by the dominated convergence theorem and \eqref{bound_Nn}, it tends to $-\epsilon^2\int_0^t \flp Q(t-s)(f(s)-f(t)) d s$ as $h \downarrow 0$. For the second and the third integral:
\[
\int_0^t \int_0^1 -\epsilon^2\flp Q(\tau+\eta h) d \eta d \tau \underset{h \downarrow 0}{\longrightarrow} -\int_0^t \epsilon^2\flp Q(\tau) d \tau= Q(t) - I,
\]
\[\frac{1}{h} \int_t^{t+h} Q(t+h-s) f(s) d s  = \frac{1}{h}\int^h_0Q(r)f(t+h-r)dr\underset{h \downarrow 0}{\longrightarrow} f(t),\] 
by strong continuity of $Q$ and $f$ at $t$. Thus
\[
W'(t)= -\epsilon^2\int_0^t \flp Q(t-s)f(s)\,ds + f(t).
\]
Continuity \(t\mapsto W'(t)\) on \([\delta,T]\) follows from the same dominated-convergence argument, so, \(W\in C^1([\delta,T],C_0(\Omega))\) gives $u \in C^1([\delta, T] , C_0(\Omega))$.

\textbf{Global existence.}
Taking the $L^2$-inner product of \eqref{ori_eq} gives 
\[
\frac12\frac{d}{dt}\|u(t)\|_2^2 + \epsilon^2\langle \flp u(t),u(t)\rangle = \int_\Omega \Nn(u(t,x))\,u(t,x)dx
\]
for $t\in(0,T]$. Split the domain into $\{|u|< m\}$ and $\{|u|\ge m\}$ and from  
\[
M:=\sup_{|u|< m}\,\bigl|u \Nn(u)\bigr|<\infty.
\]
By the hypothesis $u \Nn(u)\le K u^2$ for $|u|\ge m$,
\[
\int_\Omega \Nn(u)u dx \le K\|u\|_2^2 + M\,|\Omega|.
\]
Then from Lemma \ref{lemma_spectrum_split}, we find
\[
\frac{d}{dt}\|u(t)\|_2^2 + 2\epsilon^2\lambda_1^{\alpha}\|u(t)\|_2^2\le 2K\|u(t)\|_2^2 + 2M|\Omega|,
\]
and the Grönwall inequality yields, for all $t\in[0,T)$,
\[
\|u(t)\|_2^2
\le
e^{2(K-\epsilon^2\lambda_1^\alpha)t}\|u_0\|_2^2
+
2M|\Omega|
\int_0^t e^{2(K-\epsilon^2\lambda_1^\alpha)(t-s)}\,ds .
\]
Therefore no $L^2$-norm blow-up can occur in finite time and thus the solution $u$ exists globally in a weak sense based on Proposition \ref{global_mild}.  
\end{proof}
\end{theorem}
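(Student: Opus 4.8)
The plan is to handle the two assertions in turn: first the equivalence between the strong formulation \eqref{ori_eq} and the mild formula \eqref{mild_sol}, and then the global a priori bound under the one-sided quadratic condition.

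For the forward implication ($\Rightarrow$), assuming $u$ is regular enough to differentiate (namely $u\in C^1((0,T],C_0(\Omega))\cap C([0,T],\Hh(\Omega))$), I would fix $t$ and introduce the auxiliary path $\psi(s)=Q(t-s)u(s)$ on $[0,t]$. Differentiating and using $\frac{d}{ds}Q(t-s)=\epsilon^2\flp Q(t-s)$ together with the equation $\partial_s u=-\epsilon^2\flp u+\Nn(u)$ collapses the two generator terms, leaving $\psi'(s)=Q(t-s)\Nn(u(s))$. Integrating from $0$ to $t$ and using $Q(0)=I$ reproduces \eqref{mild_sol} exactly; the only subtlety is to justify differentiating under the semigroup, which is legitimate precisely because $Q$ is analytic and $u(s)\in\dom(\flp)$ for $s>0$.

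The backward implication ($\Leftarrow$) is the technical heart. Here I would show that the Duhamel term $W(t)=\int_0^t Q(t-s)\Nn(u(s))\,ds$ is continuously differentiable into $C_0(\Omega)$, lands in $\dom(\flp)=\Hh(\Omega)$, and satisfies $W'(t)=-\epsilon^2\int_0^t\flp Q(t-s)\Nn(u(s))\,ds+\Nn(u(t))$. The enabling estimate is the analytic smoothing bound $\|\flp Q(s)\|\le C/s$. First I would use it to prove that $t\mapsto u(t)$ is locally Hölder continuous, by splitting $u(t+h)-u(t)$ into the three standard Duhamel increments and controlling each; the Hölder exponent $\theta\in(0,1)$ comes from integrating the singular kernel against $h^\theta(t-s)^{-\theta}$. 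Local Lipschitzness of $\Nn$ then transfers this Hölder regularity to $s\mapsto\Nn(u(s))$, which is exactly what tames the singular kernel: the naive bound $\|\flp Q(t-s)\|\,\|\Nn(u(s))-\Nn(u(t))\|$ improves to $C(t-s)^{\theta-1}$, integrable near $s=t$. Membership in $\Hh(\Omega)$ and the differentiation formula then follow by dominated convergence.

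For global existence, I would pair \eqref{ori_eq} with $u$ in $L^2$ to obtain $\tfrac12\frac{d}{dt}\|u\|^2+\epsilon^2(\flp u,u)=\int_\Omega\Nn(u)u\,dx$. Splitting the integral over $\{|u|<m\}$ and $\{|u|\ge m\}$, the hypothesis $u\Nn(u)\le Cu^2$ controls the large-$u$ region while the small-$u$ region contributes at most $M|\Omega|$ with $M=\sup_{|u|<m}|u\Nn(u)|$. Bounding $(\flp u,u)\ge\lambda_1^\alpha\|u\|^2$ via Corollary \ref{lemma_spectrum_split} yields a linear differential inequality, and Grönwall produces an a priori $L^2$ bound on every finite interval, so no $L^2$ blow-up occurs. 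The main obstacle is that the blow-up alternative in Proposition \ref{global_mild} is stated in the $C_0(\Omega)$ sup-norm, whereas the a priori estimate controls only $\|u\|_{L^2}$; bridging this gap rigorously requires either an additional smoothing bootstrap from $L^2$ to $L^\infty$ through the analytic semigroup, or interpreting the extended solution in the weak $L^2$ sense, which is the reading adopted here.
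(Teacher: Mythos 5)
Your proposal matches the paper's proof essentially step for step: the same auxiliary path $\psi(s)=Q(t-s)u(s)$ for the forward implication, the same H\"older-continuity bootstrap via the smoothing bound $\|\flp Q(s)\|\le C/s$ followed by dominated convergence for the backward implication, and the same split $L^2$ energy estimate with Gr\"onwall for global existence. Your closing caveat --- that the a priori bound is in $L^2$ while Proposition~\ref{global_mild} is phrased in the $C_0(\Omega)$ norm --- is precisely the point the paper itself passes over by concluding only that the solution ``exists globally in a weak sense,'' so your reading coincides with the paper's intended one.
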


\begin{remark}
It is observed that $\|u\|_2$ is bounded by $M$ and $\Omega$ if $\epsilon^2\lambda_1^\alpha\ge K$. In particular, if $\epsilon^2\lambda_1^\alpha> K$ then for any $u\in C_0(\Omega)$ solves \eqref{ori_eq}, we have $\sup_{t\ge0}\|u(t)\|_\infty<\infty$ following a similar analysis in \cite{existence_mild}.
\end{remark}
\begin{remark}
    For the case $\alpha\in(\frac{1}{2},1)$, we have $\dom(\flp) = H^{2\alpha}(\Omega)\cap H^1_0(\Omega)$. Then $\|Q(t-s)\Nn(u(s))\|_2$ is bounded by
    \(\|Q(t-s)\Nn(u(s))\|_{H^1}\le C~(1+(t-s)^{-\frac{1}{2\alpha}})\in L^1(0,t)\). The results of ordinary semilinear PDEs could be directly applied to this problem, see \cite{existence_mild}, which gives $u\in C((0,T],H^{2\alpha}(\Omega)\cap H^1_0(\Omega))\cap C^1((0,T],C_0(\Omega))$.
\end{remark}

\section{Bistable reaction--nonlocal diffusion equation}

In this section, we apply the well-posedness theory developed above to a scalar reaction--nonlocal diffusion equation with a bistable Nagumo/Allen--Cahn nonlinearity. Such nonlinearities are standard in models of phase separation and bistable interface dynamics, related space-fractional Allen--Cahn equations have been used to study phase separation and traveling waves \cite{sohaib2024space,chan2017traveling}. Here we use this scalar model as a prototype on bounded domains with spectral fractional diffusion and inhomogeneous boundary conditions. Specifically, we consider the following RNDE:

\begin{equation}
\left\{\begin{aligned}
      &  \partial_t u = -\epsilon^2\flpg u -  \frac{d}{du}F(u), \ (x,t)\in\Omega\times(0,T],\\
      &   u|_{\partial\Omega} = u_-,
\end{aligned}\right.\label{bistable}
\end{equation}
where $F(u)$ is the potential and $\Omega\subset\RR^2$. For the bistable structure, a typical choice is to take the function $F$ such that $\frac{d}{du}F(u)$ has three real zeros, for example, $ \frac{d}{du}F(u) = \frac{1}{\delta} (u-u_-)(u-u_m)(u-u_+)$ with $u_-<u_m<u_+$, where two stable zeros are $u_-$ and $u_+$ and one unstable zero is $u_m$, $\delta$ here is a constant.  Two classical examples are 
\begin{itemize}
  \item the \emph{Nagumo nonlinearity}
  \[
  \frac{d}{du}F(u) = \frac{1}{\delta}u(u-a)(u-1),\qquad\delta>0, a\in(0,1),
  \]
  \item the \emph{Allen--Cahn nonlinearity}
  \[
  \frac{d}{du}F(u) = u(1-u^2),\qquad\delta = -1.
  \]
\end{itemize}

Since $ \frac{d}{du}F(u)$ is a cubic polynomial, it satisfies the growth and local Lipschitz conditions imposed in Section 3, so that all the local well--posedness and regularity results for mild solutions apply to \eqref{bistable}. 

\subsection{Energy functional}
Using relation \eqref{theorem44_18}, we introduce the shifted variable $\tilde{u}=u-u_-$. For simplicity, we still denote $\tilde{u}$ by $u$. With this change of variables, the original RNDE \eqref{bistable} can be rewritten in a homogeneous form.
\begin{equation}
\left\{\begin{aligned}
      &  \partial_t u = -\epsilon^2\flp u -  \Nn(u), \ (x,t)\in\Omega\times(0,T],\\
      &   u|_{\partial\Omega} = 0,
\end{aligned}\right.\label{nagumo}
\end{equation}
with $\Nn(u) = u\,(u+u_- - u_m)(u+u_- - u_+)$. We define the energy functional as
\[
E[u]=\frac{\epsilon^2}{2}\int_{\Omega}\bigl|(-\Delta)^\frac{\alpha}{2}u\bigr|^{2}\,dx
     +\int_{\Omega}\int^u_0\Nn(s)\,ds\,dx.
\]
By applying the calculus of variations we have
\[
     \Bigl\langle\frac{\delta E}{\delta u},\varphi\Bigr\rangle
      =\int_\Omega \bigl(\epsilon^2(-\Delta)^\frac{\alpha}{2} u\,(-\Delta)^\frac{\alpha}{2}\varphi+\Nn(u)\varphi\bigr)\,dx
      =\int_\Omega \bigl(\epsilon^2\flp u + \Nn(u)\bigr)\varphi\,dx,
\]
and thus,
\[
\partial_t u=-\frac{\delta E}{\delta u}.
\]

For a weak solution satisfying the spectral definition of the Laplacian, one shows by testing with $\partial_t u$ and integrating by parts that
\begin{equation}
    \frac{d}{dt}E[u(t)] = \int_{\Omega} (-\Delta)^\frac{\alpha}{2}u\, (-\Delta)^\frac{\alpha}{2}\partial_t u \,dx + \int_{\Omega} \Nn(u)\,\partial_t u\,dx
= -\int_{\Omega} \bigl(\partial_t u\bigr)^2\,dx \le 0.\label{15}
\end{equation}
Hence the energy decays in time:
\begin{equation}
    E[u(t_2)] + \int_{t_1}^{t_2} \|\partial_s u(s)\|_2^2\,ds
= E[u(t_1)], \quad \forall\,0\le t_1< t_2.\label{16}
\end{equation}
Since $\displaystyle\int^u_0\Nn(s)\,ds$ has a lower bound, let $C_b := \inf_{r\in\mathbb{R}}\int_0^r \Nn(s)\,ds > -\infty,$ then
\begin{equation}
    E[u(t)] \ge \frac{\epsilon^2}{2} \int_{\Omega} \bigl|(-\Delta)^\frac{\alpha}{2} u\bigr|^2\,dx +C_b|\Omega|
    \ge \frac{\epsilon^2\lambda_1^\alpha}{2}\int_{\Omega} | u|^2\,dx +C_b|\Omega|.\label{17}
\end{equation}
This implies that the energy-space norm of the weak solution remains uniformly bounded for all time. More precisely, by the equivalence between the $\mathbb{H}^\alpha$-norm and the $H^\alpha$-norm, we obtain
\[
\|u(t)\|_{H^{\alpha}(\Omega)}^2 \le c_1\frac{2+2\lambda^{-\alpha}_1}{\epsilon^2}(E[u(t)]-C_b|\Omega|) \le c_1\frac{2+2\lambda^{-\alpha}_1}{\epsilon^2}(E[u(0)]-C_b|\Omega|), \quad \forall\,t\ge0,
\]
for some constant $c_1>0$. Thus the energy functional $E[u(t)]$ provides both the monotonic decay of the total energy and uniform $H^\alpha$-bounds on weak solutions, i.e.,\[\sup_{t\ge0}\|u(t)\|_{\mathbb{H}^\alpha(\Omega)}\le\sup_{t\ge0}\|u(t)\|_{H^\alpha(\Omega)}<\infty.\]
If we further assume that $u\in C\bigl((0,\infty),\mathbb{H}^{\alpha}(\Omega)\bigr)$, then the following result shows that any $\omega$-limit point of $u(t)$ in $L^2(\Omega)$ is a stationary solution.

\begin{proposition}\label{omega_set}
    For any initial condition $u_0 \in C_0(\Omega)$, define the $\omega$-limit set as
    \[
      \omega(u_0) = \{y\in L^2(\Omega): \exists\, t_n\to\infty,\ u(t_n)\to y \text{ in } L^2(\Omega)\}.
    \]
    Then 
    \begin{itemize}
      \item[(i)] $\omega(u_0) \ne \varnothing$;
      \item[(ii)] any $y\in \omega(u_0)$ is a stationary solution, i.e.,$-\epsilon^2\flp y -  \Nn(y) = 0 \quad \text{in }\mathbb{H}^{-\alpha}(\Omega).$
    \end{itemize}
    \begin{proof}
    Since $E[u(t)]$ has a lower bound, \eqref{15} implies that $\lim_{t\to\infty}E[u(t)] =: E^\infty$ exists. By taking $t_1=0$ and letting $t_2\to\infty$ in \eqref{16}, we conclude that $\int^\infty_0\|\partial_t u(t)\|_2^2\,dt <\infty$.
    Therefore, there exists a sequence $\tau_n\to\infty $ such that $\|\partial_t u(\tau_n)\|_2\to 0$.

    Since $\sup_{t\ge0}\|u(t)\|_{\mathbb{H}^\alpha(\Omega)}<\infty$ and the embedding $\mathbb{H}^{\alpha}(\Omega)\hookrightarrow L^2(\Omega)$ is compact, we obtain that  $\{u(t):t\ge0\}\text{ is relatively compact in } L^2(\Omega)$.
    Then, for any sequence $\{t_n\}$, the sequence $\{u(t_n)\}$ admits a strongly convergent subsequence in $L^2(\Omega)$. In particular, there exist $y\in L^2(\Omega)$ and a subsequence $\{\tau_{n_k}\}\subset\{\tau_n\}$ such that $\|u(\tau_{n_k})-y\|_2\to 0$ as $\tau_{n_k}\to\infty$. Thus $y\in \omega(u_0)$, which proves (i).

    For any $y\in\omega(u_0)$, there exists a sequence $\{t_n\}$ such that $t_n\to\infty$ and $u(t_n)\to y$ in $L^2(\Omega)$. Letting $n\to \infty$, and from \eqref{16} with $t_1=t_n$ and $t_2=t_n+1$ we obtain
    \[
      \int^{t_n+1}_{t_n}\|\partial_t u(t)\|_2^2\, dt \to 0.
    \]
    Hence, by the mean-value theorem, for each $n$ there exists $s_n\in[t_n,t_n+1]$ such that 
    \[
      \|\partial_t u(s_n)\|_2^2\le\int^{t_{n}+1}_{t_n}\|\partial_t u(t)\|_2^2\, dt \to 0,
    \]
    and 
    \[
      \|u(s_n)-u(t_n)\|_2
      \le\int^{s_n}_{t_n}\|\partial_t u(t)\|_2\,dt
      \le\Bigl(\int^{t_{n}+1}_{t_n}\|\partial_t u(t)\|_2^2\, dt\Bigr)^{1/2}\to 0.
    \]
    This shows that $u(s_n)\to y$ in $L^2(\Omega)$. Moreover, the sequence $\{u(s_n)\}$ is bounded in $\mathbb{H}^\alpha(\Omega)$, so, up to a further subsequence (still denoted by $\{s_n\}$), we have $u(s_n)\rightharpoonup y \quad\text{weakly in }\mathbb{H}^\alpha(\Omega)$, and in particular $y\in\mathbb{H}^\alpha(\Omega)$.

    Now, for any $\phi\in\mathbb{H}^\alpha(\Omega)$, we have the weak form of \eqref{nagumo} at $t=s_n$:
    \[
      (\partial_t u(s_n),\phi) 
      + \epsilon^2\bigl((-\Delta)^{\alpha/2} u(s_n),(-\Delta)^{\alpha/2}\phi\bigr) 
      + (\Nn(u(s_n)),\phi)=0.
    \]
    Using $\|\partial_t u(s_n)\|_2\to0$ and the weak convergence $u(s_n)\rightharpoonup y$ in $\mathbb{H}^\alpha(\Omega)$, we can pass to the limit in the bilinear form involving $(-\Delta)^{\alpha/2}$. Moreover, since $\Nn$ is locally Lipschitz and $u(s_n)\to y$ in $L^2(\Omega)$, then we have $\Nn(u(s_n))\to \Nn(y)$ in $L^2(\Omega)$. 
    \[
      \epsilon^2\bigl((-\Delta)^{\alpha/2} y,(-\Delta)^{\alpha/2}\phi\bigr) + (\Nn(y),\phi)=0 \quad \text{for any }\phi\in \mathbb{H}^{\alpha}(\Omega),
    \]
   by letting $n\to \infty$.
    \end{proof}
\end{proposition}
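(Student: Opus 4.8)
The plan is to treat \eqref{nagumo} as a gradient flow and exploit the Lyapunov structure of the dissipation identity \eqref{16} together with the compactness supplied by the uniform $\mathbb{H}^\alpha$-bound. For part (i), I would first note that \eqref{15} makes $E[u(t)]$ nonincreasing and \eqref{17} bounds it below, so $E^\infty := \lim_{t\to\infty}E[u(t)]$ exists; setting $t_1=0$ and letting $t_2\to\infty$ in \eqref{16} then gives $\int_0^\infty\|\partial_t u(t)\|^2\,dt<\infty$. Combining $\sup_{t\ge0}\|u(t)\|_{\mathbb{H}^\alpha(\Omega)}<\infty$ with the compact embedding $\mathbb{H}^\alpha(\Omega)\hookrightarrow L^2(\Omega)$ shows the orbit $\{u(t):t\ge0\}$ is relatively compact in $L^2(\Omega)$, so any sequence $t_n\to\infty$ admits a strongly $L^2$-convergent subsequence and $\omega(u_0)\neq\varnothing$.

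For part (ii), I fix $y\in\omega(u_0)$ with $t_n\to\infty$ and $u(t_n)\to y$ in $L^2$. Since $\int_0^\infty\|\partial_t u\|^2<\infty$, the tails satisfy $\int_{t_n}^{t_n+1}\|\partial_t u\|^2\,dt\to0$, so a mean-value argument on $[t_n,t_n+1]$ selects times $s_n\in[t_n,t_n+1]$ with $\|\partial_t u(s_n)\|\to0$. Because the interval has unit length, Cauchy--Schwarz gives $\|u(s_n)-u(t_n)\|\le\bigl(\int_{t_n}^{t_n+1}\|\partial_t u\|^2\,dt\bigr)^{1/2}\to0$, hence $u(s_n)\to y$ in $L^2$ as well. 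The sequence $\{u(s_n)\}$ is bounded in $\mathbb{H}^\alpha(\Omega)$, so a further subsequence satisfies $u(s_n)\rightharpoonup y$ weakly in $\mathbb{H}^\alpha(\Omega)$ (the weak limit must coincide with the strong $L^2$ limit), which in particular yields $y\in\mathbb{H}^\alpha(\Omega)$.

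The final step is to pass to the limit in the weak form of \eqref{nagumo} at $t=s_n$, tested against arbitrary $\phi\in\mathbb{H}^\alpha(\Omega)$: the term $(\partial_t u(s_n),\phi)$ vanishes since $\|\partial_t u(s_n)\|\to0$, and the Dirichlet form $\bigl((-\Delta)^{\alpha/2}u(s_n),(-\Delta)^{\alpha/2}\phi\bigr)$ converges to $\bigl((-\Delta)^{\alpha/2}y,(-\Delta)^{\alpha/2}\phi\bigr)$ directly from the weak $\mathbb{H}^\alpha$-convergence. I expect the main obstacle to be the nonlinear term, namely establishing $\Nn(u(s_n))\to\Nn(y)$ in $L^2(\Omega)$: since $\Nn$ is cubic, $L^2$-convergence of $u(s_n)$ does not by itself control $\Nn(u(s_n))$, and in $\RR^2$ the fractional Sobolev embedding of the $\mathbb{H}^\alpha$-bound is only borderline sufficient. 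The clean remedy is a uniform $L^\infty$ bound, which the invariant-range property established later in this section supplies; granting such uniform integrability, the local Lipschitz continuity of $\Nn$ upgrades $L^2$-convergence to strong convergence of $\Nn(u(s_n))$, and letting $n\to\infty$ produces $\epsilon^2\bigl((-\Delta)^{\alpha/2}y,(-\Delta)^{\alpha/2}\phi\bigr)+(\Nn(y),\phi)=0$ for every $\phi\in\mathbb{H}^\alpha(\Omega)$, which is exactly the weak stationary equation $-\epsilon^2\flp y-\Nn(y)=0$ in $\mathbb{H}^{-\alpha}(\Omega)$.
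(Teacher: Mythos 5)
Your argument is correct and follows the same route as the paper's own proof: energy monotonicity \eqref{15} with the lower bound \eqref{17} gives $\int_0^\infty\|\partial_t u(t)\|^2\,dt<\infty$; the uniform $\mathbb{H}^\alpha(\Omega)$ bound plus the compact embedding into $L^2(\Omega)$ gives (i); and the intermediate times $s_n\in[t_n,t_n+1]$ with $\|\partial_t u(s_n)\|\to0$, $u(s_n)\to y$ strongly in $L^2(\Omega)$ and weakly in $\mathbb{H}^\alpha(\Omega)$ let you pass to the limit in the weak form, exactly as in the paper. The one place you depart is the nonlinear term, and your caution there is warranted: the paper disposes of it in one line, asserting that local Lipschitz continuity of $\Nn$ together with $u(s_n)\to y$ in $L^2(\Omega)$ already yields $\Nn(u(s_n))\to\Nn(y)$ in $L^2(\Omega)$. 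For the cubic superposition operator this inference is not valid as stated: the estimate $|f(a)-f(b)|\le L_R|a-b|$ holds only on bounded subsets of $\RR$, so converting it into an $L^2$ estimate requires a uniform $L^\infty$ (or at least $L^p$, $p\ge6$) bound on $\{u(s_n)\}$, which strong $L^2$ convergence alone does not supply. Your remedy --- invoking the invariant-range property of Theorem~\ref{bound_bistable} to get $\sup_n\|u(s_n)\|_{L^\infty}\le u_+-u_-$ and then applying the pointwise Lipschitz bound --- does close the gap (and is not circular, since that theorem is independent of Proposition~\ref{omega_set}), but note that it silently adds the hypothesis $0\le u_0\le u_+-u_-$, which is absent from the statement. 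Alternative repairs without that restriction: for $\alpha\ge 2/3$ in dimension two, the embedding $\mathbb{H}^\alpha(\Omega)\hookrightarrow L^{2/(1-\alpha)}(\Omega)\subset L^6(\Omega)$ together with a.e.\ convergence of a subsequence gives weak $L^2$ convergence of $\Nn(u(s_n))$, which suffices for the pairing with $\phi$; or one can use the remark following Theorem~\ref{global existence}, which provides $\sup_{t\ge0}\|u(t)\|_\infty<\infty$ when $\epsilon^2\lambda_1^\alpha>K$. In short, your write-up is more careful than the paper's precisely at the step where the paper's own proof is incomplete.
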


Specifically, $C_b=0$ when $u_m\ge \frac{u_-+u_+}{2}$, then from \eqref{17} the coercivity estimate is given by
\[
\|u(t)\|_{\mathbb{H}^{\alpha}(\Omega)}^2 \le c_1\,\frac{2+2\lambda^{-\alpha}_1}{\epsilon^2}\,E[u(0)] \quad \text{for } t\in(0,\infty).
\]
If we increase the regularity of $u(t)$, for example $u(t)\in D((-\Delta)^\alpha)=\Hh(\Omega)$, then the stationary equation in Proposition~\ref{omega_set} actually holds in $L^2(\Omega)$ rather than only in $\mathbb{H}^{-\alpha}(\Omega)$.

\subsection{Boundedness of solution}
Let $P=\{(x, t) \mid x \in \Omega, 0<t \leq T\}, \Gamma$ is the parabolic boundary of $P$, which is given by $\Gamma=\partial \Omega \times(0, T) \bigcup \Omega \times\{t=0\}$. We denote by $C^{1,1}(P)$ the set of functions which are continuously differentiable with respect to $x$ and continuously differentiable with respect to $t$.

\begin{lemma}\label{maximum}
    For a function $u(x)\in \Hh(\Omega)$ and $\flp u\in C(\Omega)$ with $\alpha\in(0,1)$ . If $x^*$ is a global maximum of $u(x)$, then $\flp u(x^*)\ge0$. If $x^*$ is a global minimum of $u(x)$, then $\flp u(x^*)\le0$.
\end{lemma}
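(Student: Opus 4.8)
The plan is to use the heat-semigroup representation of $(-\Delta)^\alpha$ rather than the eigenfunction expansion, because the semigroup formula makes the sign structure transparent. Recall from the preliminaries that for $u\in\Hh(\Omega)$ with $(-\Delta)^\alpha u\in C(\Omega)$ we have the pointwise identity
\[
(-\Delta)^\alpha u(x)=\frac{1}{\Gamma(-\alpha)}\int_0^\infty\bigl(e^{t\Delta}u(x)-u(x)\bigr)\,\frac{dt}{t^{1+\alpha}},
\]
where $e^{t\Delta}$ is the Dirichlet heat semigroup on $\Omega$. The key analytic facts I would invoke are that the Dirichlet heat semigroup is positivity preserving and $L^\infty$-contractive (sub-Markovian): its kernel $p_t(x,y)$ is nonnegative and satisfies $\int_\Omega p_t(x,y)\,dy\le 1$ for all $x$ and $t>0$. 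These two properties are precisely the ones highlighted in the paragraph introducing the semigroup representation, so they are available to me.

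First I would fix a global maximum point $x^*$ of $u$, so that $u(x^*)\ge u(y)$ for every $y\in\Omega$. The central estimate is to bound the heat flow at $x^*$ from above by $u(x^*)$: writing
\[
e^{t\Delta}u(x^*)=\int_\Omega p_t(x^*,y)\,u(y)\,dy,
\]
I would use $p_t(x^*,y)\ge0$ together with $u(y)\le u(x^*)$ to obtain
\[
e^{t\Delta}u(x^*)\le u(x^*)\int_\Omega p_t(x^*,y)\,dy\le u(x^*),
\]
where the final inequality is the sub-Markovian bound $\int_\Omega p_t(x^*,y)\,dy\le1$ (here I use that $u(x^*)\ge0$ at a global maximum when the boundary values are zero; if $u(x^*)<0$ the problem is vacuous for a nonnegative right-hand side, but I would handle signs carefully by splitting the mass bound). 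Consequently $e^{t\Delta}u(x^*)-u(x^*)\le 0$ for every $t>0$.

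Now the sign of $\Gamma(-\alpha)$ decides the conclusion. Since $\alpha\in(0,1)$, the value $\Gamma(-\alpha)$ is negative, so $1/\Gamma(-\alpha)<0$. Because the integrand $\bigl(e^{t\Delta}u(x^*)-u(x^*)\bigr)t^{-1-\alpha}$ is pointwise nonpositive, the integral is nonpositive, and multiplying by the negative constant $1/\Gamma(-\alpha)$ flips the sign, giving $(-\Delta)^\alpha u(x^*)\ge0$. The minimum statement follows by applying the maximum case to $-u$, whose global minimum at $x^*$ becomes a global maximum of $-u$, and using linearity of the operator.

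The main obstacle I anticipate is justifying the sub-Markovian mass bound $\int_\Omega p_t(x^*,y)\,dy\le1$ in the correct form and reconciling it with the sign of $u(x^*)$: for the Dirichlet (killed) semigroup the total mass is strictly less than one because of absorption at $\partial\Omega$, which is exactly what one wants, but the inequality $u(y)\le u(x^*)$ only controls the heat flow after multiplying by a nonnegative kernel, and one must ensure $u(x^*)$ has the right sign for the mass-deficit term to push in the favorable direction. I would resolve this by noting that for $u\in\Hh(\Omega)$ one has $u=0$ on $\partial\Omega$, so a global maximum value $u(x^*)$ is automatically $\ge0$ (and a global minimum value is $\le0$), which makes the mass deficit $1-\int_\Omega p_t(x^*,y)\,dy\ge0$ contribute with the correct sign. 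A secondary technical point is the integrability of the representation near $t=0$ and $t=\infty$, which is guaranteed by $(-\Delta)^\alpha u\in C(\Omega)$ and the regularity $u\in\Hh(\Omega)$, so interchanging the sign analysis with the integral is legitimate.
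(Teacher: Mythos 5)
Your proposal is correct and follows essentially the same route as the paper: the heat-semigroup representation of $\flp$, positivity preservation together with the sub-Markovian ($L^\infty$-contractive) property of the Dirichlet heat semigroup, the sign $\Gamma(-\alpha)<0$, and reduction of the minimum case to the maximum case via $-u$. Your write-up is in fact more careful than the paper's one-line argument, since you make explicit the point the paper glosses over when it writes $(e^{t\Delta}u)(x)\le\|u\|_{L^\infty(\Omega)}=u(x^*)$ --- namely that the mass deficit of the killed semigroup pushes in the favorable direction only because $u(x^*)\ge 0$ at a global maximum of a function vanishing on $\partial\Omega$.
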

\begin{proof}
    Since that $e^{t\Delta}$ is a positive-preserved and $L^\infty$-contractive semigroup. Then for the maximum $x^*$, it gives
    \[(e^{t\Delta}u)(x)\le\|u\|_\infty=u(x^*),\ \forall x\in\Omega.\]
    Therefore, $\flp u(x^*)\ge0$. And let $v(x) = -u(x)$ we can similarly prove $\flp u(x^*)\le0$.
\end{proof}

\begin{lemma}[Weak Maximum Principle]\label{weak_maximum}
Assume that $k: P \to \mathbb{R}$ is a bounded continuous function and there exists a constant $k_0 > 0$ such that
    \[
    k(x, t) \ge k_0 > 0 \quad \text{for all}\quad (x, t) \in P.
    \]
Suppose $u \in C^{1,1}(P) \cap C(\overline{P})\cap C([0,T],\Hh(\Omega))$ satisfies the differential inequality
\begin{equation}\label{weak_maximum_1}
    k(x,t)\partial_t u + \mathcal{L} u + c(u)u \geq 0 \quad \text{in } P,
\end{equation}
and the boundary condition
\[
\min_{\Gamma} u(x,t) \ge 0,
\]
where $\mathcal{L}:=\epsilon^2\flp +\nabla$ and $c(u)\ge-c_0$ with a constant $c_0\in \RR^+$. Then
\[
\min_{\overline{P}} u(x, t) \geq 0.
\]     
\begin{proof}
Choose
\[
  \lambda:=\frac{c_0}{k_0},
  \qquad w(x,t):=e^{-\lambda t}u(x,t).
\]
Substituting into \eqref{weak_maximum_1} and dividing by $e^{\lambda t}>0$ yields
\begin{equation}
  k\,\partial_t w + \mathcal{L}w + a(x,t,w)w \;\ge\; 0 \label{new_weak_maximum}
  \quad\text{in }P,
\end{equation}
where $a(x,t,w)=\lambda k + c(e^{\lambda t}w)$, by $k\ge k_0$ and $c(w)\ge -c_0$ we have $a(x,t,z)\ge0$, for every $(x,t)\in P$ and every real $w$.
Moreover, on $\Gamma$ we have $w=e^{-\lambda t}u\ge 0$.

Now we claim that \eqref{new_weak_maximum} plus $w|_{\partial\overline{P}}\ge 0$ implies $w\ge 0$ on $\overline P$.
Assume by contradiction that $\min_{\overline P} w<0\le\min_{\partial\overline P} w$.
Define that 
\[
  w_\varepsilon^2(x,t):=w(x,t)+\varepsilon^2 t,
\]
where $0<\varepsilon^2\ll 1$.

It is clear to see that that $w_\varepsilon^2\ge w\ge 0$ on $\partial\overline{P}$. By assumption, there exists a point $(x^*,t^*)\in P$ and a $\delta>0$ such that $\min_{\overline{P}}w\le-\delta<0$. Then for $\forall \delta>0$ we choose $\varepsilon^2<\frac{\delta}{T}$ such that
\[w_\varepsilon^2(x^*,t^*) = w(x^*,t^*) + \varepsilon^2 t^*<0\]

Hence $\min_{\overline{P}}w_\varepsilon^2<0$ is attained at an interior point $(x_\varepsilon^2,t_\varepsilon^2)\in P$. Therefore, by minimality $\partial_t(w_\varepsilon^2)(x_\varepsilon^2,t_\varepsilon^2)\le0$ and the Lemma \ref{maximum},
\[\nabla w_\varepsilon^2(x_\varepsilon^2,t_\varepsilon^2)=0,\ \flp w_\varepsilon^2(x_\varepsilon^2,t_\varepsilon^2)\le0,\  \partial_t(w_\varepsilon^2)(x_\varepsilon^2,t_\varepsilon^2)\le0.\]
This is gives
\begin{equation}
    k(x,t)\partial_t(w_\varepsilon^2)+\mathcal{L}w_\varepsilon^2+a(x,t,w)w^2_\varepsilon\le0\ \ \text{at }(x_\varepsilon^2,t_\varepsilon^2).\label{contradicts}
\end{equation}
On the other hand,
\[
  k\partial_t(w_\varepsilon^2) + \mathcal{L}w_\varepsilon^2 + a\,w_\varepsilon^2
  = \big(k \partial_tw + \mathcal{L}w + a\,w\big) + \varepsilon^2 k + \varepsilon^2 t\,a,
\]
and using \eqref{new_weak_maximum} together with $k\ge k_0>0$, $a\ge 0$, $t\ge 0$, we get
\begin{equation}\label{neg-side}
  k\partial_t(w_\varepsilon^2) + \mathcal{L}w_\varepsilon^2 + a\,w_\varepsilon^2
  \;\ge\; \varepsilon^2 k(x_\varepsilon^2,t_\varepsilon^2)
  \;\ge\; \varepsilon^2 k_0 > 0
  \quad\text{at }(x_\varepsilon^2,t_\varepsilon^2),
\end{equation}
which contradicts \eqref{contradicts}. Therefore $\inf_{\overline P}w\ge 0$, i.e. $w\ge 0$ on $\overline P$.
Since $u=e^{\lambda t}w$ with $e^{\lambda t}>0$,
we conclude $u\ge 0$ in $\overline P$.
\end{proof}
\end{lemma}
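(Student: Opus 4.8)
The plan is to remove the indefinite zeroth-order coefficient $c(u)$ by an exponential rescaling in time, reducing the claim to the special case of a \emph{nonnegative} reaction coefficient, and then to run a contradiction argument at an interior minimum using the fractional maximum principle of Lemma~\ref{maximum}. The only genuinely nonlocal ingredient is the sign of $\flp$ at an extremum; everything else mirrors the classical parabolic weak maximum principle.

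First I would set $\lambda:=c_0/k_0$ and $w(x,t):=e^{-\lambda t}u(x,t)$. Because $\mathcal{L}=\epsilon^2\flp+\nabla$ acts only in the spatial variables while $e^{\lambda t}$ is independent of $x$, one has $\mathcal{L}(e^{\lambda t}w)=e^{\lambda t}\mathcal{L}w$, so substituting $u=e^{\lambda t}w$ into the hypothesis and dividing by $e^{\lambda t}>0$ gives
\[
k\,\partial_t w + \mathcal{L}w + a(x,t,w)\,w \;\ge\; 0 \quad\text{in }P,\qquad a:=\lambda k + c(e^{\lambda t}w).
\]
The choice of $\lambda$ is exactly what makes $a$ nonnegative: from $k\ge k_0$ we get $\lambda k\ge c_0$, and $c\ge -c_0$ then forces $a\ge 0$ pointwise. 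Moreover $w=e^{-\lambda t}u\ge 0$ on $\Gamma$, so it suffices to prove that the transformed inequality together with nonnegative parabolic-boundary data forces $w\ge 0$ on $\overline P$.

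For this reduced statement I would argue by contradiction, assuming $\min_{\overline P}w=-\delta<0$. Since $w\ge0$ on $\Gamma$, the negative minimum is attained at an interior point. To obtain a strict inequality I introduce the perturbation $w_\varepsilon:=w+\varepsilon t$ with $0<\varepsilon<\delta/T$, which still has a strictly negative minimum, necessarily attained at some interior $(x_\varepsilon,t_\varepsilon)\in P$. At that point $x_\varepsilon$ is a global spatial minimum of $w_\varepsilon(\cdot,t_\varepsilon)$ over $\overline\Omega$, so Lemma~\ref{maximum} yields $\flp w_\varepsilon(x_\varepsilon,t_\varepsilon)\le 0$; in addition $\nabla w_\varepsilon=0$ and $\partial_t w_\varepsilon\le0$ there (the time derivative is nonpositive whether the minimum is interior in $t$ or at $t=T$), while $a\,w_\varepsilon<0$ since $a\ge0$ and $w_\varepsilon<0$. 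Hence $k\,\partial_t w_\varepsilon+\mathcal{L}w_\varepsilon+a\,w_\varepsilon\le 0$. On the other hand, since $w_\varepsilon=w+\varepsilon t$,
\[
k\,\partial_t w_\varepsilon+\mathcal{L}w_\varepsilon+a\,w_\varepsilon
=\bigl(k\,\partial_t w+\mathcal{L}w+a\,w\bigr)+\varepsilon k+\varepsilon t\,a
\;\ge\;\varepsilon k_0>0,
\]
a contradiction. Therefore $w\ge0$ on $\overline P$, and undoing the rescaling gives $u=e^{\lambda t}w\ge0$.

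I expect the main obstacle to be the nonlocal term: unlike the Laplacian, $\flp$ cannot be controlled by a purely local Taylor expansion at the minimizer, so the classical second-derivative test is unavailable, and the argument therefore hinges entirely on Lemma~\ref{maximum}, whose validity rests on the positivity preservation and $L^\infty$-contractivity of the Dirichlet heat semigroup $e^{t\Delta}$. Two points require care in making this rigorous: the regularity assumption $u\in C([0,T],\Hh(\Omega))$ must guarantee that $\flp w_\varepsilon(\cdot,t_\varepsilon)$ is a well-defined continuous function so that Lemma~\ref{maximum} applies at the chosen time slice; and one must verify that the perturbation $\varepsilon t$ does not destroy the interiority of the minimum, which is precisely why $\varepsilon$ is taken smaller than $\delta/T$.
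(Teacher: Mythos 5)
Your proposal is correct and follows essentially the same route as the paper's own proof: the exponential rescaling $w=e^{-\lambda t}u$ with $\lambda=c_0/k_0$ to make the zeroth-order coefficient nonnegative, the perturbation $w+\varepsilon t$ with $\varepsilon<\delta/T$ to force a strictly negative interior minimum, and the contradiction at that minimum using Lemma~\ref{maximum} for the sign of $\flp$ together with $\partial_t\le 0$, $\nabla=0$, and $a\,w_\varepsilon<0$. Your additional remarks (one-sided time derivative at $t=T$, continuity of $\flp w_\varepsilon$ needed to invoke Lemma~\ref{maximum}) are points the paper glosses over, but they refine rather than alter the argument.
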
 
\begin{remark}
This lemma introduces no new hypotheses. It restates the nonlinear growth condition in Theorem \ref{global existence} in terms of $c(u)$ by writing $\Nn(u)=-c(u)u$ and $C=-c_0$, and it upgrades the $L^2$ estimate there to a uniform $L^\infty$ bound.
\end{remark}

Directly applying Lemma \ref{weak_maximum} yields the following theorem on the boundedness of the solution.
\begin{theorem}\label{bound_bistable}
    The solution of the RNDE \eqref{nagumo} satisfies $0 \leq u(x,t) \leq u_+-u_-$ in $\bar{P}$ for $0 \leq u_0(x) \leq u_+-u_-$ with $u_0(x):=u(x,0)$.
    \begin{proof}
It is straightforward to deduce from Lemma \ref{weak_maximum} that $u(x,t)\ge0$ whenever $u_0(x)\ge0$.  We here show that $u(x,t)\le u_+-u_-$. Firstly, we take auxiliary function $v(x, t)=u_+-u_- - u(x, t)$, then the relation \eqref{theorem44_18} gives $v(x, t)$ satisfying the following equation:
\begin{equation}
   \left\{ \begin{aligned}
    \partial_t v + \epsilon^2& \flpg v - v(u_+-u_--v)\,(u_+ - u_m - v)  =0, & & \text { in } \Omega \times(0, T], \\
v(x, t) & =u_+-u_-, & & \text { on } \partial \Omega \times(0, T], \\
v(x, 0) & =u_+-u_- -u_0(x), & & \text { on } \Omega.
\end{aligned}\right.
\end{equation}
Use the relation \eqref{theorem44_18}, it is clear to see that $\flpg v(x^*)\ge0~(\le0)$ when $x^*$ is the global maximum (minimum) of $v(x)$ by using Lemma \ref{maximum}. From $u_0(x) \leq u_+ - u_-$ we obtain $v(x, t) \geq 0$ on $\Gamma$. Assume that $\min_{\overline{P}}v<0$, clearly,  this can only be attained in $P$. Define  
\[t^* = \min_{x\in{\Omega}}\{t\in(0,T):v(x,t)<0\}\]
which is also the first time $u(x,t)>u_+-u_-$. Denote $x^*$ is the maximal point of $u(x,t^*)$. Then, by definition, 
\[\partial_t v(x^*,t^*)\ge 0,~~\flpg v(x^*,t^*)\ge 0,\]
it gives
\[    0 \ge\partial_t v \ne -\epsilon^2 \flpg v + v(u_+-u_--v)\,(u_+ - u_m - v) < 0\text{ at }(x^*,t^*).\]
This contradicts the assumption. Thus, we obtain $u(x, t) \le u_+$ in $\overline{P}$.
\end{proof}
\end{theorem}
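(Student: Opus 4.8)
The plan is to prove the two one-sided bounds separately, in each case rewriting the reaction term in the form $c(\cdot)\,(\cdot)$ with a coefficient that is bounded below, and then invoking the weak maximum principle of Lemma~\ref{weak_maximum}.

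For the lower bound $u\ge 0$, note that $u$ itself carries the homogeneous Dirichlet data, so $u\in\Hh(\Omega)$ and no lifting is needed. Factor the nonlinearity as $\Nn(u)=c(u)\,u$ with $c(u)=(u+u_--u_m)(u+u_--u_+)$; this is an upward-opening parabola in $u$, hence $c(u)\ge -c_0$ for some constant $c_0>0$. Equation \eqref{nagumo} then reads $\partial_t u+\epsilon^2\flp u+c(u)\,u=0$, which in particular satisfies the differential inequality of Lemma~\ref{weak_maximum} with $k\equiv 1$ and no first-order term. Since $u_0\ge 0$ and $u|_{\partial\Omega}=0$ give $\min_\Gamma u\ge 0$, Lemma~\ref{weak_maximum} yields $u\ge 0$ on $\overline P$.

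For the upper bound I pass to the shifted variable $v:=(u_+-u_-)-u$, whose Dirichlet data is the nonzero constant $u_+-u_-$. The lifting identity \eqref{theorem44_18} with $g=u_+-u_-$ is legitimate because $v-(u_+-u_-)=-u\in\Hh(\Omega)$, and it gives $\flpg v=\flp(-u)=-\flp u$; hence $v$ solves
\[
\partial_t v+\epsilon^2\flpg v+\hat c(v)\,v=0,\qquad \hat c(v)=(u_+-u_--v)(u_+-u_m-v).
\]
Again $\hat c$ is an upward-opening parabola in $v$ (leading coefficient $+1$) and therefore bounded below, while the data satisfy $v|_{\partial\Omega}=u_+-u_->0$ and $v(\cdot,0)=(u_+-u_-)-u_0\ge 0$, so $\min_\Gamma v\ge 0$. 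An application of the maximum-principle argument to $v$ then yields $v\ge 0$, i.e.\ $u\le u_+-u_-$, and since the bounds are independent of the horizon $T$ they hold on all of $\overline P$.

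The one genuinely new point, and the step I expect to be the main obstacle, is that Lemma~\ref{weak_maximum} is stated for the homogeneous operator $\flp$ acting on $\Hh(\Omega)$, whereas the natural comparison function $v$ has nonzero boundary data, so $\flp v$ is not directly available. The remedy is to rerun the contradiction argument of Lemma~\ref{weak_maximum} verbatim with $\flp$ replaced by $\flpg$: the only property of the operator used there is its sign at an interior extremum, and by \eqref{theorem44_18} one has $\flpg v=\flp\bigl(v-(u_+-u_-)\bigr)$ with $v-(u_+-u_-)=-u\in\Hh(\Omega)$, so Lemma~\ref{maximum} applies to $v-(u_+-u_-)$ and gives $\flpg v(x^\ast)\le 0$ at an interior minimum $x^\ast$ of $v$. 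With this sign in hand, the perturbation $v+\varepsilon^2 t$ and the computation \eqref{neg-side}--\eqref{contradicts} carry over unchanged, delivering $v\ge 0$ and completing the proof. (One should also record the correct sign of $\hat c(v)$ above, since it is what guarantees the lower bound on the effective reaction coefficient needed to invoke the principle.)
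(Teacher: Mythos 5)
Your proof is correct, and its skeleton matches the paper's: the same comparison function $v=(u_+-u_-)-u$, the same use of the lifting identity \eqref{theorem44_18}, and Lemma~\ref{maximum} applied to $v-(u_+-u_-)=-u\in\Hh(\Omega)$ to obtain the sign of $\flpg v$ at an interior minimum. The genuine difference is how the contradiction is closed, and your route is the more rigorous one. The paper argues directly at the first crossing time $t^*$; but at a touching point $v(x^*,t^*)=0$ the cubic term vanishes, so one only obtains the non-strict pair $\partial_t v(x^*,t^*)\le 0$ (first crossing from above) and $\partial_t v(x^*,t^*)=-\epsilon^2\flpg v(x^*,t^*)\ge 0$, which is compatible with both quantities being zero; the paper's displayed signs are moreover garbled, and its $v$-equation carries a spurious minus sign in front of the cubic term (your $+\hat c(v)\,v$ with $\hat c(v)=(u_+-u_--v)(u_+-u_m-v)$ is the correct form). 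You instead transplant the $\varepsilon$-perturbation argument of Lemma~\ref{weak_maximum} to $\flpg$ --- in effect proving Corollary~\ref{in_lap_max_principle} first and then applying it --- and the perturbation $v+\varepsilon^2 t$ supplies exactly the strict inequality \eqref{neg-side} that the touching-point argument lacks; the paper itself concedes this ordering when it remarks that the theorem ``inherently establishes'' the inhomogeneous maximum principle. One caution about your phrase ``carries over unchanged'': if, as in the paper's proof of Lemma~\ref{weak_maximum}, the operator is applied to the perturbed function, the extra term $\varepsilon^2 t\,\epsilon^2\flp 1$ appears and is \emph{not} zero for the spectral operator on a bounded domain; it is harmless because $\flp 1\ge 0$ by the sub-Markov property of the Dirichlet heat semigroup, or it can be avoided entirely by using the perturbation only to locate the interior minimum and to extract $\partial_t v\le-\varepsilon^2$ there, while evaluating the unperturbed equation for $v$ itself.
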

Returning to the original bistable RNDE \eqref{bistable}, we conclude that the solution remains bounded by the two stable steady states $u_-$ and $u_+$, provided the initial condition lies within this range. Furthermore, it is worth noting that the proof of Theorem \ref{bound_bistable} inherently establishes a generalized weak maximum principle. Consequently, we can extend Lemma \ref{weak_maximum} to the inhomogeneous operator $\flpg$:
\begin{corollary}\label{in_lap_max_principle}
    Suppose $u \in C^{1,1}(P) \cap C(\overline{P})\cap C([0,T],\Hh(\Omega))$ satisfies the differential inequality
\[    k(x,t)\partial_t u + \flpg u + c(u)u \geq 0 \quad \text{in } P,\]
and 
\(\min_{\Gamma} u(x,t) \ge 0,\)
where $c(u)\ge-c_0$ with a constant $c_0\in \RR^+$. Then
\(\min_{\overline{P}} u(x, t) \geq 0.
\)
\end{corollary}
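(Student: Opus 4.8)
The plan is to argue by contradiction, following the template of Lemma~\ref{weak_maximum} but with the inhomogeneous operator $\flpg$ in place of $\mathcal{L}$; the one genuinely new point is the behaviour of $\flpg$ at a spatial extremum and under an auxiliary rescaling. Write $g$ for the (constant) boundary value implicit in $\flpg$, so that $\min_\Gamma u\ge 0$ forces $g\ge 0$. First I would remove the possibly negative zeroth-order coefficient: since $c(u)\ge -c_0$ and $k\ge k_0>0$, set $\lambda:=c_0/k_0$ and $w:=e^{-\lambda t}u$. Multiplying the differential inequality by $e^{-\lambda t}>0$ and invoking \eqref{theorem44_18} to write $\flpg u=\flp(u-g)$, one checks that $e^{-\lambda t}\flp(u-g)=\flp\bigl(w-e^{-\lambda t}g\bigr)$, valid because $e^{-\lambda t}g$ is constant in space and $u-g$ vanishes on $\partial\Omega$. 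This turns the hypothesis into
\[
k\,\partial_t w+\flp\bigl(w-e^{-\lambda t}g\bigr)+a(x,t)\,w\ \ge\ 0\ \text{ in }P,\qquad a:=k\lambda+c(u)\ge 0,
\]
and since $e^{-\lambda t}>0$ the sign conditions transfer: $\min_\Gamma w\ge 0$ and $w\ge 0\iff u\ge 0$.

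The second step records the extremum sign. If $x^\ast$ is a global minimum of $w(\cdot,t)$ on $\overline\Omega$, then $w(\cdot,t)-e^{-\lambda t}g$ attains its global minimum at the same $x^\ast$ because the subtracted term is constant in space, so Lemma~\ref{maximum} gives $\flp\bigl(w-e^{-\lambda t}g\bigr)(x^\ast)\le 0$. Now suppose $\min_{\overline P}w<0$; since $\min_\Gamma w\ge 0$ this minimum is attained at an interior space point and at some $t\in(0,T]$. To create the strict inequality needed for a contradiction I would perturb in time, $w_\eta:=w+\eta t$ with $\eta>0$ so small that $w_\eta$ still has a negative minimum, necessarily at some $(x_\eta,t_\eta)$ with $x_\eta\in\Omega$ and $t_\eta\in(0,T]$. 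There the minimality conditions read $\partial_t w_\eta\le 0$ and $x_\eta$ is a global spatial minimum of $w(\cdot,t_\eta)$, hence $\partial_t w=\partial_t w_\eta-\eta\le -\eta$, $\flp(w-e^{-\lambda t_\eta}g)(x_\eta)\le 0$, and $a\,w\le 0$ (as $w(x_\eta,t_\eta)<0$). Substituting these three bounds into the inequality evaluated at $(x_\eta,t_\eta)$ gives $0\le k\,\partial_t w+\flp(w-e^{-\lambda t_\eta}g)+a\,w\le -k_0\eta<0$, a contradiction. Letting $\eta\downarrow 0$ then forces $\min_{\overline P}w\ge 0$, i.e. $u\ge 0$ on $\overline P$; this parallels the argument already carried out for $\flpg$ in Theorem~\ref{bound_bistable}.

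The step I expect to be the main obstacle is making the perturbation compatible with the nonlocal inhomogeneous operator. In the local case one would add $\flp(\eta t)=0$ to the operator and read off the correction $+k\eta$, but for the spectral fractional Laplacian $\flp$ applied to a nonzero spatial constant does not vanish (indeed $\eta t$ is not in $\Hh(\Omega)$), so that cancellation is unavailable. The clean fix is to inject the strictness through the time derivative instead: since $\partial_t w=\partial_t w_\eta-\eta\le -\eta$, the term $k\,\partial_t w\le -k_0\eta$ already supplies the strict negativity, and one never needs to apply $\flp$ to the perturbation—Lemma~\ref{maximum} is applied only to $w-e^{-\lambda t_\eta}g\in\Hh(\Omega)$. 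The remaining verifications (that the exponential rescaling preserves the spatially-constant boundary data, and that $x_\eta$ is genuinely interior so Lemma~\ref{maximum} applies) are routine.
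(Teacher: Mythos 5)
Your proof is correct, and it is in fact more careful than what the paper itself provides. The paper gives no standalone proof of this corollary: it merely remarks that the argument of Theorem~\ref{bound_bistable} (lifting via \eqref{theorem44_18}, Lemma~\ref{maximum} at a spatial extremum, and a crossing-time contradiction) ``inherently establishes'' the inhomogeneous principle, with the zeroth-order term handled as in Lemma~\ref{weak_maximum}. Your skeleton is the same --- rescale $w=e^{-\lambda t}u$, reduce $\flpg$ to $\flp\bigl(w-e^{-\lambda t}g\bigr)$ via \eqref{theorem44_18}, apply Lemma~\ref{maximum} at an interior minimum, contradict --- but your handling of the perturbation is genuinely different and repairs a real gap. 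In the paper's proof of Lemma~\ref{weak_maximum}, the perturbed function $w+\varepsilon^2 t$ is pushed through the operator and the identity $\mathcal{L}(w+\varepsilon^2t)=\mathcal{L}w$ is used implicitly; for the spectral fractional Laplacian this is false, since $\flp$ does not annihilate nonzero spatial constants (they are not Dirichlet eigenfunctions, and for $\alpha\ge\tfrac14$ they do not even belong to $\mathrm{Dom}(\flp)$). Your version never applies $\flp$ to the perturbation at all: Lemma~\ref{maximum} is invoked only for $w(\cdot,t_\eta)-e^{-\lambda t_\eta}g=e^{-\lambda t_\eta}\bigl(u(\cdot,t_\eta)-g\bigr)$, which is legitimately in $\Hh(\Omega)$, and the strict negativity needed for the contradiction is injected through $\partial_t w\le-\eta$ together with $k\ge k_0>0$. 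This same mechanism also supplies the strictness that is missing from the first-crossing argument in Theorem~\ref{bound_bistable}, whose displayed inequality contains sign typos. Two cosmetic blemishes: the parenthetical claim that $\eta t\notin\Hh(\Omega)$ fails for $\alpha<\tfrac14$, where $\mathbb{H}^{2\alpha}(\Omega)=H^{2\alpha}(\Omega)$ does contain constants (though the point that matters --- $\flp$ of a nonzero constant is nonzero --- still holds there); and you tacitly inherit the hypotheses $k\ge k_0>0$ and $u|_{\partial\Omega}=g$ (equivalently $u-g\in\Hh(\Omega)$) from the surrounding context, exactly as the corollary's own statement does.
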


\subsection{Numerical simulations}\label{subsec:numerics_bistable}

 In this part, we perform numerical experiments for the 2D reaction--nonlocal diffusion equation to illustrate how the fractional order $\alpha$ affects propagation and to verify the boundedness property observed in the analysis. We employ a sine pseudospectral discretization (DST-I) on a uniform $N\times N$ grid with $N=1024$.
This choice is natural for homogeneous Dirichlet boundary conditions and diagonalizes the spectral fractional Laplacian.
More precisely, writing the discrete sine transform coefficients $\widehat{u}_{k m}$, the operator $(-\Delta)^\alpha$ acts as a multiplier in spectral space:
\[
\widehat{\,(-\Delta)^\alpha u\,}_{k m} = \lambda_{k m}^{\alpha}\,\widehat{u}_{k m},
\qquad
\lambda_{k m}=\Bigl(\frac{k \pi}{L}\Bigr)^2+\Bigl(\frac{m \pi}{L}\Bigr)^2,\quad 1\le k, m\le N-1.
\]
Time integration is performed by a fourth-order exponential time--differencing Runge--Kutta method (ETDRK4), which is well suited for stiff diffusion operators.
Implementation details, including the evaluation of the ETD coefficients and the spectral treatment of $(-\Delta)^\alpha$, are provided in Appendix~A.

\paragraph{Model and parameters.}
We consider the shifted fractional Nagumo equation in the homogeneous Dirichlet setting,
\begin{equation}\label{eq:numerics_nagumo}
\partial_t u = -\epsilon^2(-\Delta)^\alpha u - \Nn(u), 
\qquad (x,t)\in \Omega\times(0,T],
\qquad u|_{\partial\Omega}=0,
\end{equation}
with $u_-=0$, $u_m=a$, $u_+=1$ and
$\Nn(u)=\delta^{-1}u(u-a)(u-1)$.
Unless otherwise stated, we use the following parameters:
\[
\Omega=(0,L)^2,\quad L=2,\quad
a=0.35,\quad \epsilon^2=5\times10^{-3},\quad \delta=10^{-2},
\quad T=5.
\]
The initial condition $u_0(x,y)$ is a localized perturbation centered at $(1,1)$. More precisely, it is zero everywhere except in a small disk
$D_N:=\{(x,y):(x-1)^2+(y-1)^2\le 0.02^2\}$, where it takes the value $0.5$, that is,
\[
u_0(x,y)=
\left\{
\begin{aligned}
&0.5, \quad (x,y)\in D_N,\\
&0, \quad (x,y)\in \Omega\backslash D_N.
\end{aligned}
\right.
\]
In practice, we use a smooth approximation of this profile so that $u_0\in C_0(\Omega)$.

Figure~\ref{fig:nagumo_alpha} shows the solution of \eqref{eq:numerics_nagumo} at the final time $T=5$ for $\alpha = 0.65, 0.75, 0.85, 0.95$. Increasing $\alpha$ visibly produces faster spreading: the region in the $(x,y)$-plane where $u(\cdot, T)$ is appreciably positive becomes larger, and the profile becomes smoother. This is consistent with the spectral scaling $\lambda_{km}^{\alpha}$, which enhances the damping of high-frequency modes as $\alpha$ increases. Besides, the computed solutions remain within the interval $[0,1]$ for all tested $\alpha$. In particular, although the solution becomes more spatially spread out as $\alpha$ increases, the amplitude does not overshoot the stable states. This numerically reflects the invariant-region property proved in Theorem~\ref{bound_bistable} for the initial condition satisfying $0\le u_0\le 1$.

In the simulations, after an initial transient the evolution slows down, and the solution at each spatial grid point shows a clear tendency to settle: values that are initially close to $0$ increase over time and gradually approach the stable level near $1$ as the activation region expands.
To further quantify this behavior, we measure how the fractional order $\alpha$ influences the spatial spreading of $u$.

\begin{figure}[htbp]
\centering
\begin{minipage}[t]{0.23\textwidth}
    \centering
    \includegraphics[width=\linewidth]{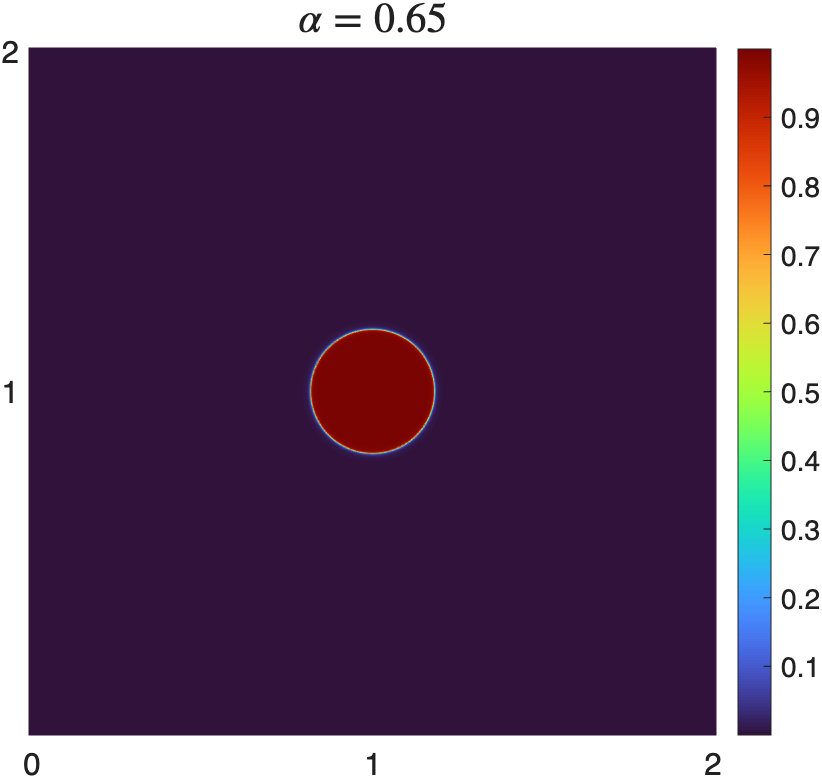}
\end{minipage}
\hfill
\begin{minipage}[t]{0.23\textwidth}
    \centering
    \includegraphics[width=\linewidth]{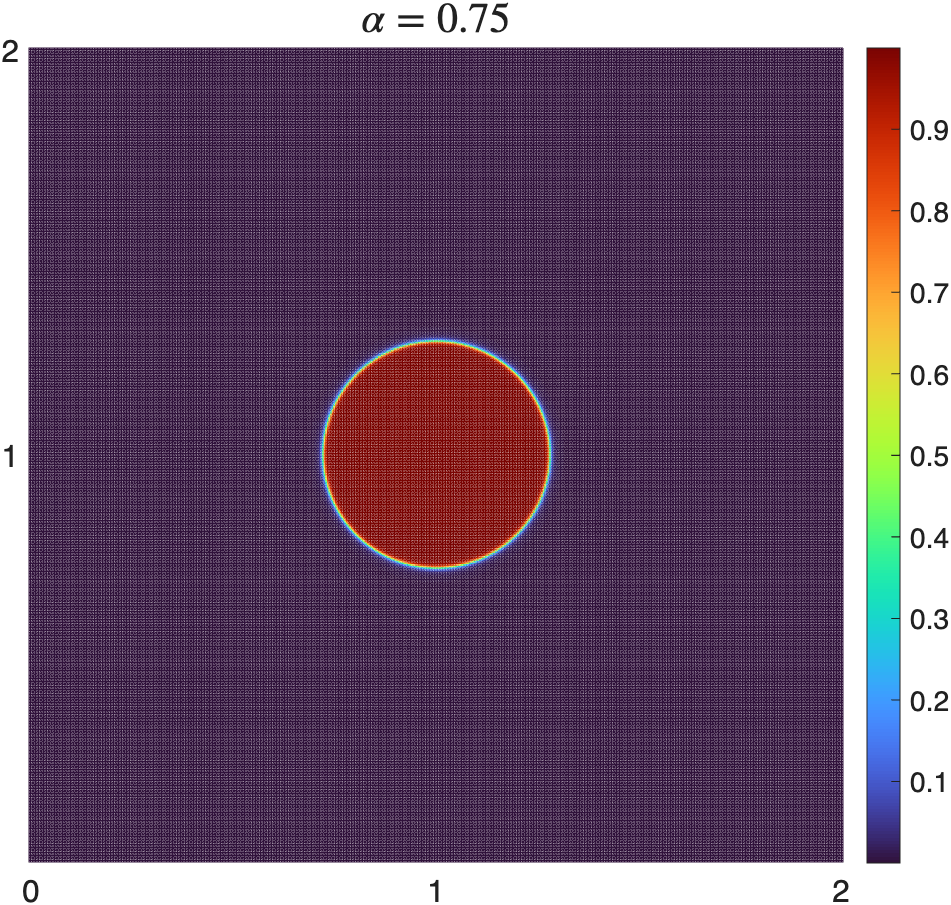}
\end{minipage}
\hfill
\begin{minipage}[t]{0.23\textwidth}
    \centering
    \includegraphics[width=\linewidth]{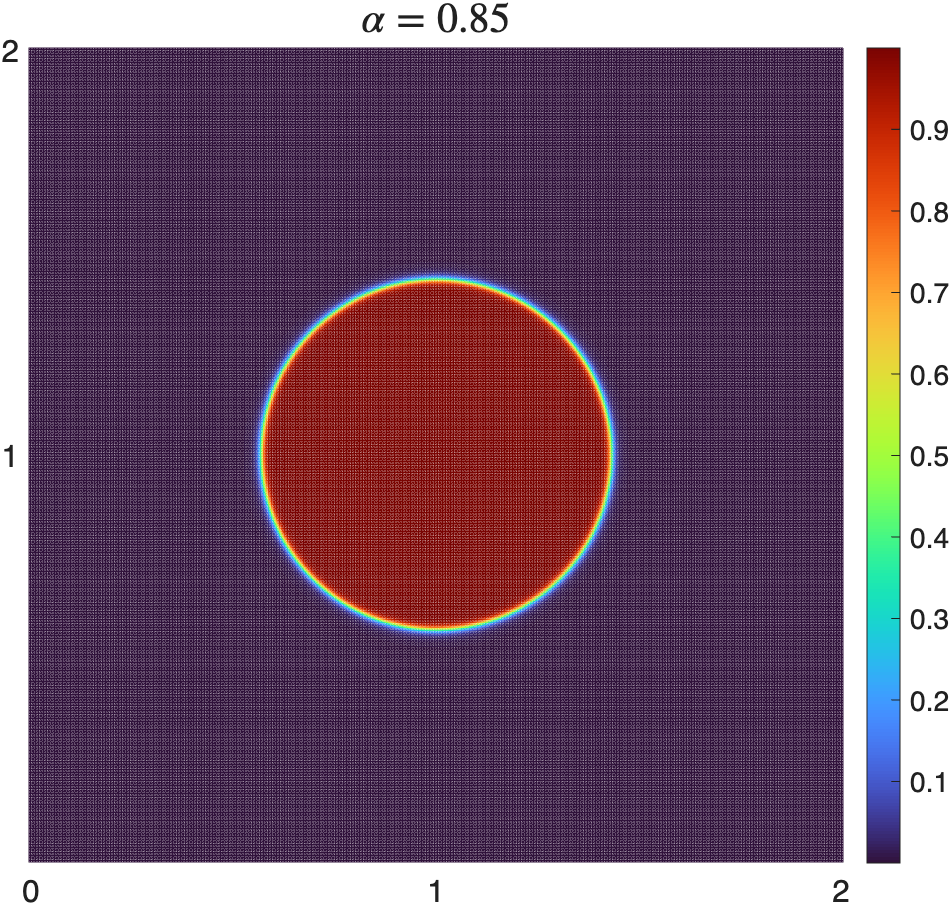}
\end{minipage}
\hfill
\begin{minipage}[t]{0.23\textwidth}
    \centering
    \includegraphics[width=\linewidth]{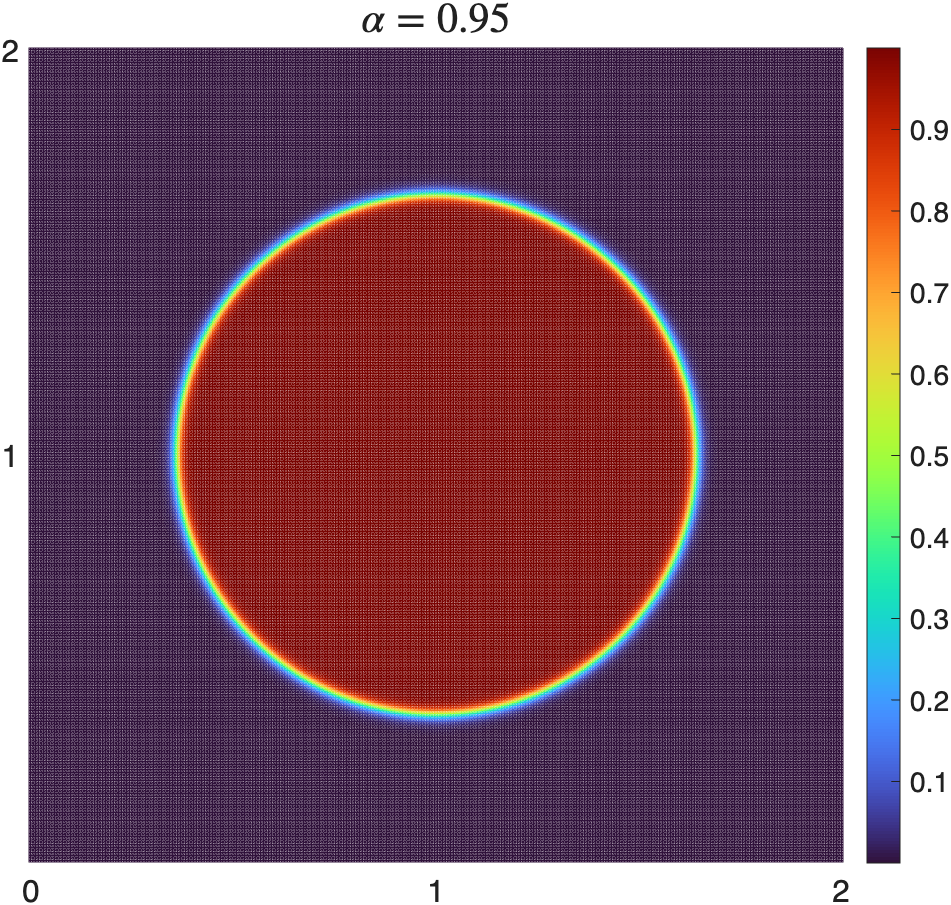}
\end{minipage}
\caption{Numerical solutions of the fractional Nagumo model \eqref{nagumo} on a $1024^2$ grid at $T=5$ for $\alpha=0.65,0.75,0.85,0.95$, where $a=0.35$, $\epsilon^2=5\times 10^{-3}$, and $\delta=10^{-2}$.}
\label{fig:nagumo_alpha}
\end{figure}

The long-time behavior is also compatible with the gradient-flow picture developed above. The energy dissipation identity \eqref{16} suggests relaxation toward stationary states, and Proposition~\ref{omega_set} characterizes $\omega$-limit points as stationary solutions. In the simulations, after an initial transient the evolution slows down and exhibits invasion-type dynamics: points not yet reached by the front stay near the stable state $u=0$, while activated points evolve toward the other stable state $u=1$. To quantify this behavior, we measure how the fractional order $\alpha$ influences the spatial spreading of $u$.

For a fixed threshold $\theta\in(0,1)$, we define the superlevel-set area
\[
A_\theta(t):=\bigl|\{(x_i,y_j)\in\Omega:\ u(x_i,y_j,t)\ge \theta\}\bigr|.
\]

A larger $A_\theta(t)$ means that the solution occupies a larger part of the domain at level $\theta$, i.e., it is more spread out. 
Figure \ref{nagumo_speed} plots $A_{0.5}(t)$ for several values of $\alpha=0.65,0.75,0.85,0.95$ and shows a clear monotone trend: larger $\alpha$ yields a faster increase of $A_{0.5}(t)$, hence a faster spreading rate. Moreover, for all $\alpha$, the curves become smoother at later times and their slopes gradually level off, indicating a nearly constant growth rate.

\begin{figure}
    \centering
    \includegraphics[width=0.3\linewidth]{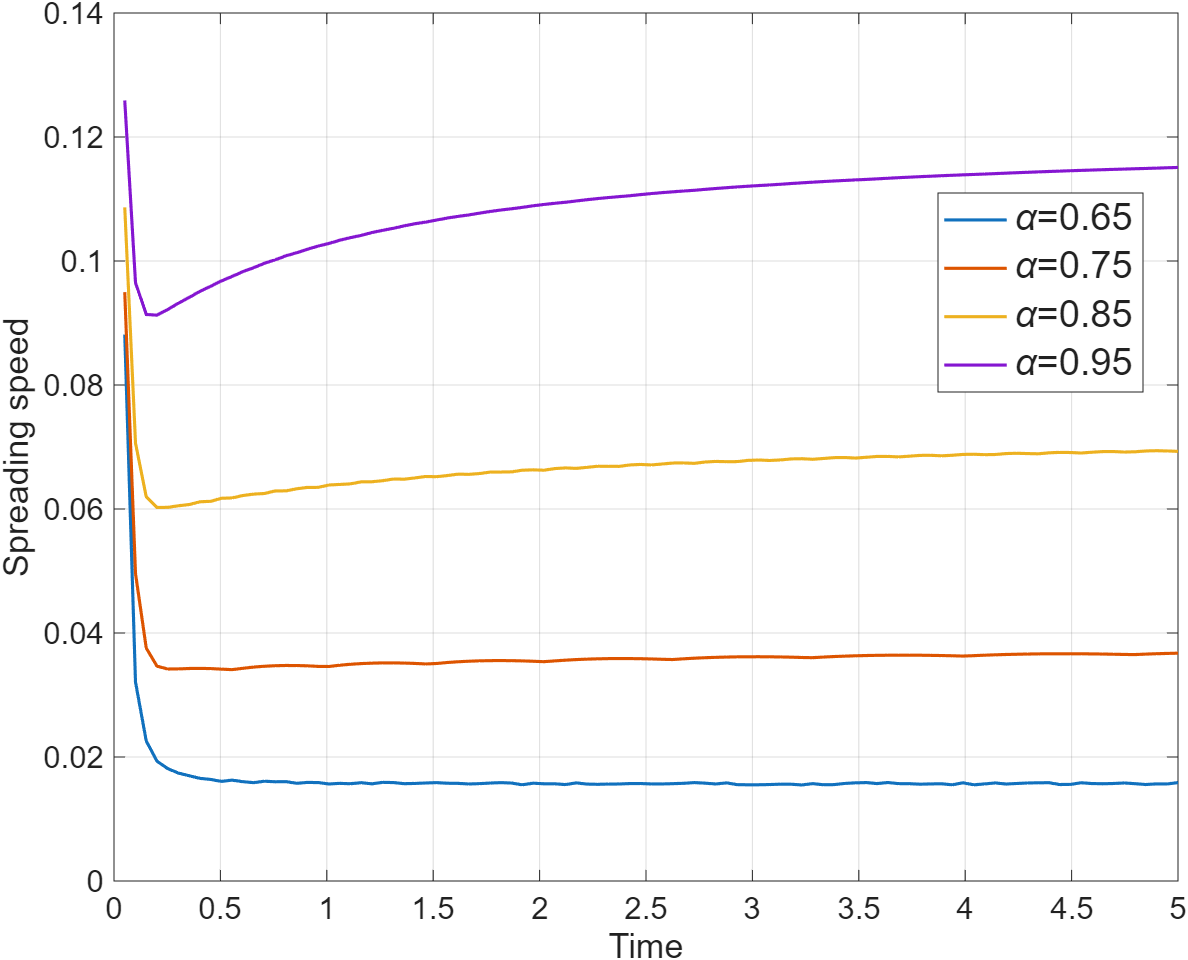}
    \caption{Spreading speed of the fractional Nagumo model \eqref{nagumo} with $\alpha=0.65,0.75,0.85,0.95$, where $a=0.35$, $\epsilon^2=5\times 10^{-3}$, and $\delta=10^{-2}$ are used.  }
    \label{nagumo_speed}
\end{figure}

\section{The fractional Gray-Scott system}

After completing the study of a single nonlocal reaction--diffusion equation, we proceed to coupled nonlocal systems. 
In particular, we consider the Gray–Scott model, which serves as a classical prototype in the theory of pattern formation. Mathematically, this system provides a canonical example of an activator–inhibitor mechanism that exhibits rich spatiotemporal dynamics, including diffusion-driven instabilities and Turing-type patterns. 
In recent years, its space-fractional extension has attracted considerable attention: existing works address well-posedness, discretization, and simulations of fractional Gray--Scott dynamics and demonstrate that the fractional order can substantially modify both instability thresholds and pattern morphologies \cite{yuan2024adaptive,wang2019fractional}. 
Related studies also investigate nonlocal (kernel-based) Gray--Scott models, establishing well-posedness and analyzing diffusive limits toward the classical system \cite{laurenccot2023nonlocal}. In contrast to the integral-type fractional Laplacian used in these works, we here focus on the spectral fractional Laplacian on bounded domains with inhomogeneous Dirichlet boundary conditions. Benefiting from the relation \eqref{theorem44_18} in Section~2.3, we convert it into a homogeneous Dirichlet boundary system. This yields a clean spectral multiplier representation, which enables a unified analysis based on the semigroup approach and leads to an efficient spectral discretization (e.g., DST-I).
Moreover, the analysis developed in this section can be adapted to related RNDEs—such as the Schnakenberg, Gierer–Meinhardt, or Brusselator models—which share the same spectral multiplier representation but differ in their specific nonlinearities.

Consider the following fractional Gray–Scott system:
\begin{equation}\label{ori_gs}
    \left\{\begin{aligned}
    &\partial_t u=-r_u\flpg u-u v^2+F (1-u),  \quad (x, t) \in \Omega \times[0, T],  \\ 
    & \partial_t v=-r_v(-\Delta)^\beta v+u v^2-(F+\kappa) v, \quad (x, t) \in \Omega \times[0, T], \\ 
    & u(x,0)=u_0(x), \quad v(x,0)=v_0(x), \quad x \in \Omega, \\ 
    & u = 1, \quad  v =0, \quad (x, t) \in \partial \Omega \times[0, T],
\end{aligned}\right.
\end{equation}
where $\Omega\subset\RR^2$ is a bounded domain, $r_u$, $r_v$, $F$, $\kappa$ are positive constants. According to the relation \eqref{theorem44_18}, the fractional Gray–Scott system \eqref{ori_gs} can be reduced in a homogeneous RNDE form.
\begin{equation}\label{gs}
    \left\{\begin{aligned}
    &\partial_t u=-r_u(-\Delta)^\alpha u-(u+1) v^2 - F u,  \quad (x, t) \in \Omega \times[0, T],  \\ 
    & \partial_t v=-r_v(-\Delta)^\beta v+(u+1) v^2-(F+\kappa) v, \quad (x, t) \in \Omega \times[0, T], \\ 
    & u(x,0)=u_0(x) - 1, \quad v(x,0)=v_0(x), \quad x \in \Omega, \\ 
    & u = 0, \quad  v =0, \quad (x, t) \in \partial \Omega \times[0, T].
\end{aligned}\right.
\end{equation}

This transformation enforces a consistent and continuous boundary condition up to \(\partial\Omega\). It also allows us to directly use the positivity (and related energy) properties of the homogeneous fractional diffusion operator $\flp$, and it simplifies the numerical discretization by avoiding extra boundary-correction terms. Therefore, in the sequel we restrict our analysis and computations to this homogeneous system.

\subsection{Local boundedness}

\begin{theorem}\label{bound_gs}
    For every positive initial condition $u_0,v_0\in C_0(\Omega)$, there is a unique solution 
    \[u,v\in C([0,T],C_0(\Omega))\]
    to \eqref{gs} and satisfies
    \[-1 \le u(x,t) \le e^{-F t}(1- \|u_0\|_\infty)~\text{ and }~v\ge0\]
if $u_0(x)\le 1$ for $(x,t)\in\Omega\times[0,T)$.
\begin{proof}
The reaction term $\Nn(u,v) = \big(-(u+1)v^2 - Fu, (u+1)v^2 - (F+k)v\big)^T$ has a continuous Jacobian 
\[J(u, v) = \begin{pmatrix} -(v^2 + F) & -2(u+1)v \\ v^2 & 2(u+1)v - (F+\kappa) \end{pmatrix}\] 
consisting of polynomial entries. Thus, on any compact set $D \subset \mathbb{R}^2$ with $|u| \le R$ and $|v| \le R$, the entries are bounded by $R$, $F$, and $\kappa$, and by the Mean Value Theorem 
$\Nn$ is locally Lipschitz. Then, the local existence and uniqueness are directly from Proposition \ref{global_mild} and Theorem \ref{global existence}, i.e., $u,v\in C((0,T),\Hh(\Omega))$. Therefore, the solution $(\widetilde{u},v)$ of \eqref{ori_gs} is also existed.  Since $\partial_t \widetilde{u} + r_u(-\Delta)_g^\alpha \widetilde{u} + \widetilde{u} (v^2+1)=F>0$, we have $\widetilde{u}\ge0$ for $(x,t)\in\Omega\times[0,T]$ from Corollary \ref{in_lap_max_principle}, that is $u\ge-1$ in the RNDE system \eqref{gs}.

To show the boundedness of $u$, we notice that 
\[\begin{aligned}
    u(t)&=e^{-t(r_u\flp
+F)}(u_0 - 1)
  - \int_0^t e^{-(t-s)(r_u\flp
+F)}\bigl[(1+u(s))v(s)^2\bigr]\,ds\\
&\le e^{-t(r_u\flp
+F)}(u_0 - 1)\\
&\le e^{-tF}(u_0 - 1).
\end{aligned}\]
Together with $u\ge -1$, this gives $\|u(t)\|_\infty\le 1$. 

Since $F + \kappa - (u+1)v \ge F + \kappa$, we again apply Lemma \ref{weak_maximum} the $v$-equation and conclude that $v(t) \ge 0$ on $[0,T]$.
\end{proof}
\end{theorem}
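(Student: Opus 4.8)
The plan is to treat the homogeneous system~\eqref{gs} as a single abstract semilinear equation on the product space $C_0(\Omega)\times C_0(\Omega)$ and then to peel off the three sign/size constraints in an order in which each one supplies exactly what the next needs. For existence and uniqueness I would first observe that the diagonal linear part $\mathrm{diag}\bigl(-r_u\flp,\,-r_v(-\Delta)^\beta\bigr)$ generates an analytic contraction semigroup on $C_0(\Omega)\times C_0(\Omega)$, since each spectral fractional Laplacian is $m$-accretive (Section~3); the two orders $\alpha,\beta$ only change the multipliers, not this structure. The reaction $\Nn(u,v)=\bigl(-(u+1)v^2-Fu,\ (u+1)v^2-(F+\kappa)v\bigr)$ is polynomial with $\Nn(0,0)=0$, hence locally Lipschitz on bounded sets, so the Banach fixed-point argument of Theorem~\ref{local_mild_exist} (now read in the product space), Proposition~\ref{global_mild} and Theorem~\ref{global existence} give a unique mild solution on a maximal interval and upgrade each component, for positive times, to $C((0,T],\Hh(\Omega))\cap C^1((0,T],C_0(\Omega))$ (with $2\beta$ in place of $2\alpha$ for $v$). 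This regularity is precisely what licenses the pointwise maximum-principle arguments that follow.

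Next I would establish $u\ge-1$ by returning to the unshifted variable $\widetilde u=u+1$, which solves $\partial_t\widetilde u+r_u\flpg\widetilde u+(v^2+F)\widetilde u=F\ge0$ with $\widetilde u|_{\partial\Omega}=1$ and $\widetilde u(\cdot,0)=u_0\ge0$. After dividing by $r_u$ to match the normalization, the zeroth-order coefficient $(v^2+F)/r_u$ is nonnegative (so bounded below) and $\widetilde u\ge0$ on the parabolic boundary $\Gamma$; Corollary~\ref{in_lap_max_principle} then applies verbatim and yields $\widetilde u\ge0$, i.e.\ $u\ge-1$ on $\overline P$.

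With $u\ge-1$ secured, the upper bound follows from the scalar Duhamel representation of the shifted variable using the generator $-(r_u\flp+F)$,
\[
u(t)=e^{-t(r_u\flp+F)}(u_0-1)-\int_0^t e^{-(t-s)(r_u\flp+F)}\,(1+u(s))\,v(s)^2\,ds.
\]
Because $1+u\ge0$ and $v^2\ge0$, the integrand is nonnegative, and the heat-subordinated semigroup is positivity preserving, so the whole integral is nonnegative and may be discarded with the correct sign: $u(t)\le e^{-t(r_u\flp+F)}(u_0-1)$. Factoring $e^{-t(r_u\flp+F)}=e^{-tF}e^{-t r_u\flp}$ and combining positivity with the $L^\infty$-contractivity of $e^{-t r_u\flp}$ shows that, for $0\le u_0\le1$, this first term is nonpositive with modulus at most $e^{-tF}\|1-u_0\|_\infty\le e^{-tF}$. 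This gives the stated decay upper bound, and combining $u\le0$ with the lower bound $u\ge-1$ from the previous step yields the uniform estimate $\|u(\cdot,t)\|_\infty\le1$.

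Finally, for $v\ge0$ I would recast the $v$-equation as $\partial_t v+r_v(-\Delta)^\beta v+c(v)\,v=0$ with $c(v)=(F+\kappa)-(u+1)v$ and invoke the weak maximum principle, namely the order-$\beta$ analogue of Lemma~\ref{weak_maximum}, whose pointwise ingredient Lemma~\ref{maximum} is valid for every fractional order in $(0,1)$. The data $v_0\ge0$ and $v|_{\partial\Omega}=0$ give $v\ge0$ on $\Gamma$. The step I expect to be the main obstacle is verifying the hypothesis $c(v)\ge-c_0$, since $c$ contains the solution-dependent factor $(u+1)v$ and so looks circular; I would break the apparent circularity by using the already-proved bound $0\le u+1\le1$ together with the finite local sup-norm $\|v\|_{L^\infty(\overline P)}$ that the $C([0,T],C_0(\Omega))$ mild solution already possesses on the fixed interval $[0,T]$, fixing a finite $c_0=\|v\|_{L^\infty(\overline P)}-(F+\kappa)$. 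A secondary technicality is checking that the semigroup regularity $\bigl(C^{1,1}(P)\cap C(\overline P)\cap C([0,T],\Hh(\Omega))\bigr)$ meets the hypotheses of Lemma~\ref{weak_maximum} and that its perturbation-and-contradiction argument is unaffected by the nonautonomous coefficient; both are routine once the sign constraint $u\ge-1$ is in hand.
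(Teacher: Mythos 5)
Your proposal is correct and follows essentially the same route as the paper: product-space semigroup well-posedness, the unshifted variable together with Corollary \ref{in_lap_max_principle} for $u\ge-1$, the Duhamel formula for the generator $-(r_u\flp+F)$ plus positivity preservation of the semigroup for the upper bound, and the weak maximum principle (Lemma \ref{weak_maximum}) for $v\ge0$. If anything, you are more careful than the paper at two points: you justify the hypothesis $c(v)\ge -c_0$ via the finite sup-norm of $(u+1)v$ on $\overline P$ (the paper's claim $F+\kappa-(u+1)v\ge F+\kappa$ presupposes $(u+1)v\le 0$, i.e.\ the very sign of $v$ being proved), and you deduce only $u(t)\le 0$ from positivity of $e^{-t r_u\flp}$ applied to $u_0-1\le 0$, rather than the paper's pointwise step $e^{-t(r_u\flp+F)}(u_0-1)\le e^{-tF}(u_0-1)$, which fails near $\partial\Omega$ because the Dirichlet semigroup pulls negative data up toward zero there.
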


Following Theorem \ref{bound_gs}, we obtain a straightforward conclusion: there exists an invariant set for this G-S system \eqref{gs}.

\begin{corollary}[Invariant set]
    Let the initial condition $u_0,v_0\in C_0(\Omega)$. If $\|u_0\|_\infty \le 1$ and $\|v_0\|_\infty \le \frac{F+\kappa}{2-\|u_0\|_\infty}$, then the solution $(u, v)$ exists globally and satisfies
    \[\|u(t)\|_\infty\le 1,~~\|v(t)\|_\infty\le\frac{F+\kappa}{2-\|u_0\|_\infty} ~\text{ for any }t\in(0,\infty).\]

\begin{proof}
Set $M = \frac{F+\kappa}{2-\|u_0\|_\infty}$.
Assume, to the contrary, that $\sup_{\Omega\times[0,T]} v>M$.
Because $v_0\le M$ by hypothesis, we define the first crossing time
\[
t_*:=\inf\Big\{t\in(0,T]:\ \max_{\overline\Omega} v(\cdot,t)>M\Big\}.
\]
By continuity of $t\mapsto v(\cdot,t)$ in $C_0(\Omega)$, we have
\(
\max_{\overline\Omega} v(\cdot,t)\le M
\) for all $t\in[0,t_*]$.
Hence, there exists $x_*\in\overline\Omega$ such that
\[
v(x_*,t_*)=M=\max_{\overline\Omega} v(\cdot,t_*).
\]
Since $v(\cdot,t_*)\in C_0(\Omega)$ and $v=0$ on $\partial\Omega$, then $x_*\in\Omega$. By Lemma \ref{maximum}, we have $\flp v(x_*,t_*)\ge 0$.

For every $t\in[0,t_*]$ we have $v(x_*,t)\le \max_{\overline\Omega}v(\cdot,t)\le M=v(x_*,t_*)$.
Therefore, the function $t\mapsto v(x_*,t)$ attains its maximum over $[0,t_*]$ at $t=t_*$.
Since $v$ is differentiable in time, we conclude that
\begin{equation}\label{eq:dtv_nonneg}
\partial_t v(x_*,t_*)\ge 0.
\end{equation}

Evaluating the $v$-equation at $(x_*,t_*)$ gives
\[
\partial_t v(x_*,t_*)
=-r_v(-\Delta)^\beta v(x_*,t_*)+(u(x_*,t_*)+1)M^2-(F+\kappa)M.
\]
Using \ $-r_v\flp v(x_*,t_*)\le 0$ and from Theorem \ref{bound_gs}, we obtain
\begin{align*}
\partial_t v(x_*,t_*)
&\le (u(x_*,t_*)+1)M^2-M(F+\kappa) \\
&\le\bigl(u(x_*,t_*)+1-1\bigr)M^2
=u(x_*,t_*)\,M^2\le 0,
\end{align*}
which contradicts \eqref{eq:dtv_nonneg}. Consequently,
\begin{equation}
    \|u(t)\|_\infty\le 1,~~\|v(t)\|_\infty\le\frac{F+\kappa}{2-\|u_0\|_\infty}\label{28}
\end{equation}
for all $t\in[0,T]$. By Proposition \ref{global_mild}, the solution \eqref{28} can be extended globally in time.
\end{proof}
\end{corollary}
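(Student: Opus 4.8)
The plan is to lean on the two pointwise facts already established in Theorem~\ref{bound_gs} and promote them to a genuine invariant set via a first-crossing-time (barrier) argument for $v$. The bound on $u$ comes for free: Theorem~\ref{bound_gs} gives $-1\le u(x,t)\le e^{-Ft}(1-\|u_0\|_\infty)\le 1$, hence $\|u\|_\infty\le1$, and the same theorem gives $v\ge0$. Thus the only substantive task is the upper barrier $\|v(t)\|_\infty\le M$ with $M:=\frac{F+\kappa}{2-\|u_0\|_\infty}$ (well-defined and positive since $\|u_0\|_\infty\le1$). Once this holds on each finite $[0,T]$, the pair $(u,v)$ stays in a fixed bounded subset of $C_0(\Omega)\times C_0(\Omega)$, so the blow-up alternative of Proposition~\ref{global_mild} forbids finite-time blow-up and yields global existence.

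For the barrier I would first record the rearranged definition of $M$, namely the identity $(F+\kappa)M=(2-\|u_0\|_\infty)M^2$, which is exactly what makes $M$ the critical height. Arguing by contradiction, suppose $v>M$ somewhere on $\Omega\times(0,T]$. Since $v_0\le M$ and $t\mapsto v(\cdot,t)$ is continuous into $C_0(\Omega)$, set $t_*:=\inf\{t:\ \max_{\overline\Omega}v(\cdot,t)>M\}$ and choose $x_*$ with $v(x_*,t_*)=M=\max_{\overline\Omega}v(\cdot,t_*)$. Because $v=0$ on $\partial\Omega$ and $M>0$, the maximizer is interior, $x_*\in\Omega$, so Lemma~\ref{maximum} (applied with $\beta$ in place of $\alpha$) gives $(-\Delta)^\beta v(x_*,t_*)\ge0$; simultaneously $t\mapsto v(x_*,t)$ is maximized over $[0,t_*]$ at $t_*$, giving $\partial_t v(x_*,t_*)\ge0$.

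Substituting into the $v$-equation at $(x_*,t_*)$ and dropping the favorable term $-r_v(-\Delta)^\beta v\le0$ gives $\partial_t v(x_*,t_*)\le (u(x_*,t_*)+1)M^2-(F+\kappa)M$. The decisive input is the \emph{refined} bound from Theorem~\ref{bound_gs}: from $u(x_*,t_*)\le e^{-Ft_*}(1-\|u_0\|_\infty)\le 1-\|u_0\|_\infty$ we obtain $u(x_*,t_*)+1\le 2-\|u_0\|_\infty$, and then the identity collapses the right-hand side to $\partial_t v(x_*,t_*)\le(2-\|u_0\|_\infty)M^2-(F+\kappa)M=0$, contradicting $\partial_t v(x_*,t_*)\ge0$. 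It is worth stressing that one must use $u+1\le 2-\|u_0\|_\infty$ here rather than merely $u\le1$; the crude bound $u+1\le2$ is not sharp enough to reach the critical level $M$.

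The step I expect to be delicate is the \emph{strictness} of this contradiction: at the exact level $v=M$ the reaction velocity $M[(u+1)M-(F+\kappa)]$ is only $\le0$, and the maximum conditions are non-strict, so the bare argument yields $\partial_t v(x_*,t_*)=0$ rather than an outright contradiction (this already occurs when $u(x_*,t_*)=0$, admissible when $\|u_0\|_\infty=1$). Moreover the region $\{v\le M\}$ is only tangentially invariant, since for $v$ slightly above $M$ the quadratic term is destabilizing, so a linear-in-$t$ perturbation will not close the argument. I would instead compare $v$ with the exponential barrier $M_\eta(t):=M+\eta e^{Kt}$. Using $F+\kappa=(2-\|u_0\|_\infty)M$ and $u+1\le 2-\|u_0\|_\infty$, the reaction at the level $v=M_\eta$ is bounded by $(2-\|u_0\|_\infty)M_\eta(M_\eta-M)=O(\eta)$, whereas $M_\eta'(t)=\eta K e^{Kt}$; choosing $K$ larger than $F+\kappa$ (and $\eta$ small) makes $M_\eta$ a strict supersolution on $[0,T]$, so no touching can occur and $v\le M_\eta$ throughout. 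Letting $\eta\downarrow0$ gives $\|v(t)\|_\infty\le M$ on every $[0,T]$, and global existence follows from Proposition~\ref{global_mild}.
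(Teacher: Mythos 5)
Your proposal is correct, and its skeleton is the same as the paper's: quote Theorem~\ref{bound_gs} for $\|u\|_\infty\le 1$ and $v\ge 0$, run a first-crossing argument for $v$ at an interior maximum point (Lemma~\ref{maximum} giving $(-\Delta)^\beta v(x_*,t_*)\ge 0$, monotonicity in time giving $\partial_t v(x_*,t_*)\ge 0$), and invoke Proposition~\ref{global_mild} for global extension. However, you go beyond the paper in two places, and both improvements address real weaknesses in the paper's own proof. First, your algebra at the touching point is the correct one: you use $u+1\le 2-\|u_0\|_\infty$ (which follows from the stated bound $u\le e^{-Ft}(1-\|u_0\|_\infty)$) together with the identity $(F+\kappa)M=(2-\|u_0\|_\infty)M^2$ to conclude $\partial_t v(x_*,t_*)\le 0$. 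The paper instead estimates $-M(F+\kappa)\le -M^2$ and ends with $u(x_*,t_*)M^2\le 0$, a step that requires $u(x_*,t_*)\le 0$ --- which does not follow from Theorem~\ref{bound_gs} as stated when $\|u_0\|_\infty<1$; your observation that the crude bound $u+1\le 2$ is insufficient is also accurate. Second, you correctly diagnose that the bare argument only produces $\partial_t v(x_*,t_*)\ge 0$ and $\partial_t v(x_*,t_*)\le 0$ simultaneously, which is \emph{not} a contradiction (both may vanish); the paper declares a contradiction at exactly this point, so its proof has a genuine strictness gap. Your exponential barrier $M_\eta(t)=M+\eta e^{Kt}$ closes it cleanly: with $K>F+\kappa$ and $\eta$ small enough that $(2-\|u_0\|_\infty)\bigl(M+\eta e^{KT}\bigr)<K$, any touching point would force both $\partial_t v\ge \eta K e^{Kt}$ and, from the equation, $\partial_t v\le (2-\|u_0\|_\infty)M_\eta\,\eta e^{Kt}<\eta K e^{Kt}$, a strict contradiction; since $v=0<M_\eta$ on $\partial\Omega$ and $v_0\le M<M_\eta(0)$, no touching occurs, and letting $\eta\downarrow 0$ recovers $v\le M$ on $[0,T]$. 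In short, your proof is the paper's argument made rigorous at the cost of one short perturbation step, and it is the version that should be preferred.
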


For the case $r_v\ge r_u$ and $\beta\ge\alpha$, one formally has
\[u+v\le e^{-r_u\flp t}(u_0+v_0)-\int^t_0 e^{-r_u\flp (t-s)}Fu(s)ds,~~\forall t\in(0,T).\]
Combined with \eqref{28}, this gives an upper bound for the $v$-component. However, we normally assume the condition $r_u > r_v$, where $r_u$ and $r_v$ represent the diffusion coefficients of the reactant $u$ and the autocatalyst $v$, respectively. Under this condition, the reactant $u$ diffuses rapidly from the surrounding environment to ``feed'' the local production of $v$. Simultaneously, the slow diffusion of $v$ allows it to remain localized and maintain a high concentration. This mechanism creates a local positive feedback loop that facilitates the growth of spatial patterns. Conversely, if $r_v \ge r_u$, the qualitative dynamics of the system change significantly. In this regime, $v$ diffuses too quickly to accumulate into localized peaks. Furthermore, because the diffusion of $u$ is relatively slow, it cannot replenish the depleted regions effectively. Consequently, complex spatial patterns typically fail to emerge, and the system instead evolves toward a spatially homogeneous state. The same modeling consideration applies to the fractional orders $\alpha$ and $\beta$, and in the pattern-forming regimes below we typically take $\alpha\ge\beta$. This motivates the following interior estimate.

\begin{corollary}\label{wellposedness_gs}
Denote $(\lambda_1^\alpha, \phi_1)$ as the first eigenpair of the fractional Laplacian $(-\Delta)^\alpha$ with Dirichlet-0 boundary condition, i.e.,
\[
\flp\phi_1 = \lambda_1^\alpha \phi_1, \quad \phi_1|_{\partial\Omega} = 0,
\]
where $\phi_1$ is $L^2$-normalized. If $u_0 \in L^2(\Omega)$ with $\|u_0\|_\infty\le 1$, then the following holds:

\medskip
\noindent{\rm (i)} For any $t \in [0,T]$,
\begin{equation}
\int_0^t e^{-r_u\lambda_1^\alpha(t-s)} \left( \int_\Omega v(x,s)^2 \phi_1(x) \, dx \right) ds \le \int_\Omega \phi_1(x) \, dx + e^{-r_u\lambda_1^\alpha t} \int_\Omega u_0(x) \phi_1(x) \, dx. 
\end{equation}

\noindent{\rm (ii)} For any $\delta > 0$, define the interior subdomain $\Omega_\delta := \{x \in \Omega : \mathrm{dist}(x, \partial\Omega) \ge \delta\}$ and $M_\delta := \inf_{x \in \Omega_\delta} \phi_1(x) > 0$. Then
\begin{equation}
\int_0^T \int_{\Omega_\delta} v(x,s)^2 \, dx \, ds \le \frac{1}{M_\delta} \left( e^{r_u\lambda_1^\alpha T}|\Omega|^{1/2} + \|u_0\|_2 \right). 
\end{equation}
\begin{proof}
    Since $-1 \le u(x,t) \le e^{-F t}(1- \|u_0\|_\infty)$ from Theorem \ref{bound_gs}, we have
\begin{equation}
   \int^t_0 e^{-r_u\flp (t-s)}(u(\cdot,s)+1)v(\cdot,s)^2 ds \le e^{-r_u\flp t}u_0(x) + 1, ~~\forall t\in[0,T],~x\in\Omega.\label{29}
\end{equation}
Multiplying both sides of \eqref{29} by $\phi_1(x)$ and integrating over $x \in \Omega$, we apply Fubini's theorem to obtain:
\[
\int_0^t \int_\Omega \phi_1(x) (e^{-r_u\flp (t-s)}v(\cdot,s)^2)(x) \, dx \, ds \le \int_\Omega \phi_1(x) \, dx + \int_\Omega \phi_1(x) (e^{-r_u\flp t}u_0)(x) \, dx.
\]
Since $e^{-r_u\flp t}$ is self-adjoint and $e^{-r_u\flp t}\phi_1 = e^{-r_u\lambda_1^\alpha t} \phi_1$, the left-hand side simplifies to:
\[
\int_0^t e^{-r_u\lambda_1^\alpha(t-s)} \left(\int_\Omega v(x,s)^2 \phi_1(x) \, dx \right) ds.
\]
The second term on the right-hand side is treated similarly:
\[
\int_\Omega \phi_1 e^{-r_u\flp t}u_0 \, dx = \int_\Omega u_0 e^{-r_u\flp t}\phi_1 \, dx = e^{-r_u\lambda_1^\alpha t} \int_\Omega u_0 \phi_1 \, dx.
\]
This yields (i). In particular, taking $t = T$ and using the fact that $e^{-\lambda_1^\alpha(T-s)} \ge e^{-\lambda_1^\alpha T}$ for $s \in [0,T]$, we obtain
\begin{equation}
\int_0^T \int_\Omega v(x,s)^2 \phi_1(x) \, dx \, ds \le e^{r_u\lambda_1^\alpha T} \int_\Omega \phi_1(x) \, dx + \int_\Omega u_0(x) \phi_1(x) \, dx.\label{30}
\end{equation}

Finally, since \(\phi_1\in C(\overline{\Omega})\) and \(\Omega_\delta\) is a compact subset of \(\Omega\), the minimum of \(\phi_1\) on \(\Omega_\delta\) is attained: \(M_\delta=\min_{\Omega_\delta}\phi_1\). Moreover, \(\phi_1>0\) in \(\Omega\) by the strong maximum principle, hence \(M_\delta>0\) and
\[
\int_{\Omega_\delta} v^2 \, dx \le \frac{1}{M_\delta} \int_{\Omega_\delta} v^2 \phi_1 \, dx \le \frac{1}{M_\delta} \int_\Omega v^2 \phi_1 \, dx.
\]
Since $\int_\Omega \phi_1(x) dx \le |\Omega|^{1/2}$ by the Cauchy–Schwarz inequality, integrating over $s \in [0,T]$ and applying the estimate from \eqref{30} yields (ii).
\end{proof}
\end{corollary}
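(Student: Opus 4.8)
The plan is to reduce both parts to a single pointwise (in $x$) inequality for the $u$-component and then to test against the first Dirichlet eigenfunction $\phi_1$, so that the spectral fractional Laplacian is diagonalized.

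First I would establish the pointwise estimate
\[
\int_0^t e^{-r_u\flp (t-s)}\big(u(\cdot,s)+1\big)\,v(\cdot,s)^2\,ds \;\le\; e^{-r_u\flp t}u_0 + 1,\qquad x\in\Omega,\ t\in[0,T].
\]
I would obtain it by writing the $u$-equation of \eqref{gs} in Duhamel form and exploiting the sign structure supplied by Theorem~\ref{bound_gs}: the production $(u+1)v^2$ is nonnegative because $u(\cdot,s)\ge -1$ and $v(\cdot,s)\ge 0$, so since the spectral heat semigroup $e^{-r_u\flp\tau}$ is positivity preserving, the (nonnegative) Duhamel integral is controlled by the propagated initial data together with the bound $-u(t)\le 1$. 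I expect this to be the crux: it is exactly where positivity preservation of $e^{-r_u\flp\tau}$ and the invariant region from Theorem~\ref{bound_gs} are consumed, and the careful bookkeeping of the linear damping and the constant shift also happens here.

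For part (i) I would multiply the pointwise estimate by $\phi_1\ge 0$, integrate over $\Omega$, and apply Fubini to exchange the $s$- and $x$-integrals. The decisive simplification is that $e^{-r_u\flp\tau}$ is self-adjoint on $L^2(\Omega)$ with $\phi_1$ an eigenfunction, so for any $w$ one has $\int_\Omega\big(e^{-r_u\flp\tau}w\big)\phi_1\,dx=\int_\Omega w\,\big(e^{-r_u\flp\tau}\phi_1\big)\,dx=e^{-r_u\lambda_1^\alpha\tau}\int_\Omega w\,\phi_1\,dx$. Applying this to the production term on the left and to $u_0$ on the right converts the left-hand side into the weighted time-convolution in (i), produces the term $e^{-r_u\lambda_1^\alpha t}\int_\Omega u_0\phi_1$, and turns the constant $1$ into $\int_\Omega\phi_1$; this is exactly (i).

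For part (ii) I would set $t=T$ in (i) and use $e^{-r_u\lambda_1^\alpha(T-s)}\ge e^{-r_u\lambda_1^\alpha T}$ to pull the time factor out of the convolution, yielding a uniform bound on $\int_0^T\!\int_\Omega v^2\phi_1\,dx\,ds$. The interior localization then rests on strict positivity of $\phi_1$: by the strong maximum principle $\phi_1>0$ in $\Omega$, and since $\phi_1$ is continuous and $\Omega_\delta$ is compact, $M_\delta=\min_{\Omega_\delta}\phi_1>0$, whence $\int_{\Omega_\delta}v^2\le M_\delta^{-1}\int_\Omega v^2\phi_1$. Concluding with the Cauchy–Schwarz bounds $\int_\Omega\phi_1\le|\Omega|^{1/2}$ and $\int_\Omega u_0\phi_1\le\|u_0\|$ (using $\|\phi_1\|=1$) gives (ii). Beyond the positivity-preservation step behind the pointwise estimate and the strong maximum principle ensuring $M_\delta>0$, the remaining steps are routine.
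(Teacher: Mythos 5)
Your proposal is correct and follows essentially the same route as the paper's own proof: the identical pointwise Duhamel estimate for $\int_0^t e^{-r_u\flp(t-s)}(u+1)v^2\,ds$, the same testing against $\phi_1$ with Fubini, self-adjointness, and the eigenfunction relation to get (i), and the same $t=T$ reduction, strong-maximum-principle lower bound $M_\delta>0$, and Cauchy--Schwarz step for (ii). The only (minor) difference is that you explicitly flag the bookkeeping of the $Fu$ damping term and the positivity-preservation of the semigroup in deriving the pointwise estimate, which the paper asserts directly from Theorem~\ref{bound_gs} without detail.
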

\begin{remark}
    We should note that the prior estimation \eqref{35} can be extended from $\|v(t)\|_{L^2(\Omega_\delta)}$ to $\|v(t)\|_{L^2(\Omega)}$ when Neumann or periodic boundary conditions are adopted.
\end{remark}

From Corollary \ref{wellposedness_gs}, for any interior subdomain $\Omega_\delta$ there exists $t^*\in(0,T)$ such that
\[\|v(t^*)\|^2_{L^2(\Omega_\delta)}\le \frac{1}{T}\int^T_0 \int_{\Omega_\delta} v(x,s)^2dxds\le\frac{1}{T M_\delta} \left( e^{r_u\lambda_1^\alpha T}|\Omega|^{1/2} + \|u_0\|_2 \right).\]
Since $v\in C^1((0,T),C_0(\Omega))$, let $K_T = \sup_{t\in(0,T)}\|\partial_t v(t)\|_\infty$, then we have 
\begin{equation}
v(\cdot,t)=v(\cdot,t^*)+\int_{t^*}^t \partial_s v(\cdot,s)\,ds
\qquad \text{in } C_0(\Omega).
\end{equation}
Taking the $L^2(\Omega_\delta)$ norm and applying Minkowski's inequality yield
\begin{equation}
\begin{aligned}
    \|v(t)\|_{L^2(\Omega_\delta)}
   & \le
\|v(t^*)\|_{L^2(\Omega_\delta)}
+\int_{t^*}^t \|\partial_s v(s)\|_{L^2(\Omega_\delta)}\,ds\\
&\le
\|v(t^*)\|_{L^2(\Omega_\delta)}
+
|\Omega_\delta|^{1/2}
\int_{t^*}^t \|\partial_t v(s)\|_{L^\infty(\Omega)}\,ds.
\end{aligned}
\end{equation}
Consequently, for all $t\in(0,T)$,
\begin{equation}
\|v(t)\|_{L^2(\Omega_\delta)}
\le
\frac{1}{\sqrt{T M_\delta}} \left( e^{r_u\lambda_1^\alpha T}|\Omega|^{1/2} + \|u_0\|_2 \right)^{1/2}
+ |\Omega_\delta|^{1/2} T\,K_T.\label{35}
\end{equation}

\subsection{Numerical simulations}

For the Gray-Scott system \eqref{gs}, we apply the same numerical method that is used in Section~4. We set $L = 1$ and the parameters are chosen as $r_u = 2 r_v = 1e-6$. The different feeding and killing rates, $(F,\kappa)$, used for generating different pattern formations are $(0.026,0.063)$ and $(0.03,0.058)$.  The initial condition $(u_0,v_0)$ is chosen as a localized perturbation centered in the domain:
it is zero everywhere except in a small $D_{GS}:=\{(x,y):|x-0.5|=|y-0.5|\le0.04\}$ region where it takes the value $(0.5,0.25)$, i.e., 
\[(u_0(x,y),v_0(x,y)) = \left\{\begin{aligned}
    & (0.5,0.25),  ~~(x,y)\in D_{GS},\\
    & (0,0), ~~(x,y)\in \Omega\backslash D_{GS}
\end{aligned}\right.\]
where the discontinuity is regularized by convolution with a mollifier, ensuring $u_0,v_0\in C_0(\Omega)$. 

Figure \ref{pattern_gs_1} and Figure \ref{pattern_gs_2} first examine the case $\alpha=\beta$ for two different sets of parameters to show how the fractional order affects the resulting patterns. As $\beta$ increases, the system exhibits more pronounced long-range propagation, leading to the formation of organized, large-scale structures such as rings and spots. Conversely, for smaller $\beta$, the solution remains localized and retains more high-frequency components, leading to more fine-scale structures. This scale dependence comes from spectral decay.  Because the $k$-th eigenmode decays as $e^{-r_u\lambda_k^{\beta}t}$ and $e^{-r_v\lambda_k^{\beta}t}$, a lower value of $\beta$ attenuates the damping of high-frequency components. As a result, high-frequency eigenmodes persist longer at smaller fractional orders, leading to more intricate spatial patterns and weaker smoothing at small scales than in the classical case.

\begin{figure}[htbp]
\centering
\begin{minipage}[t]{0.24\textwidth}
\centering
\includegraphics[width=\textwidth]{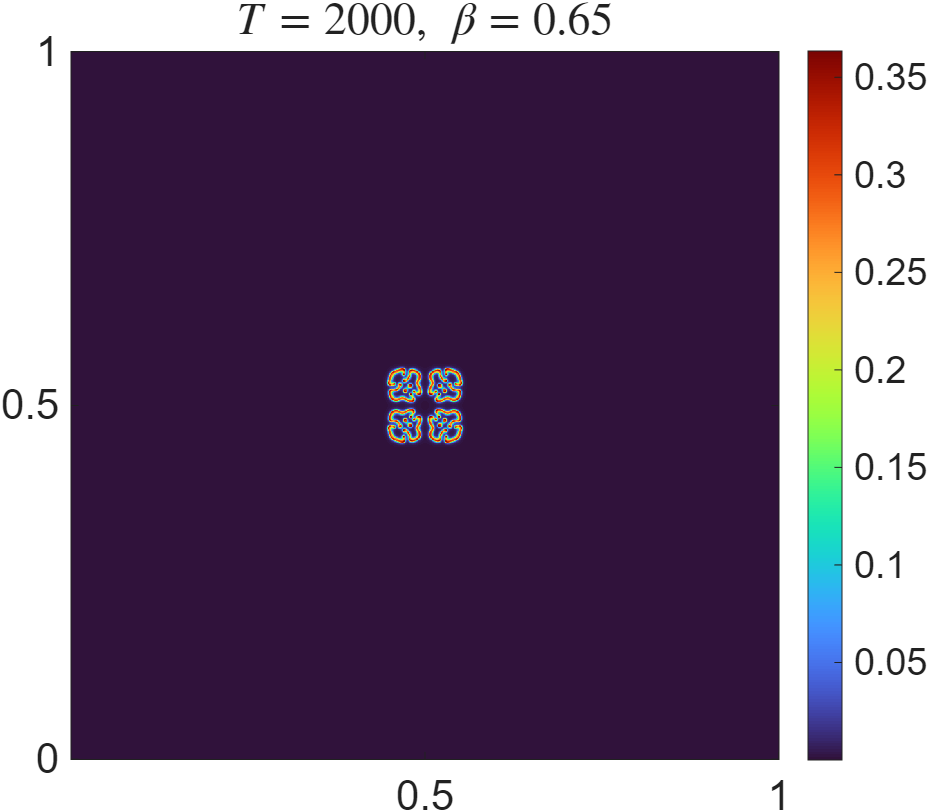}
\end{minipage}
\hfill 
\begin{minipage}[t]{0.24\textwidth}
\centering
\includegraphics[width=\textwidth]{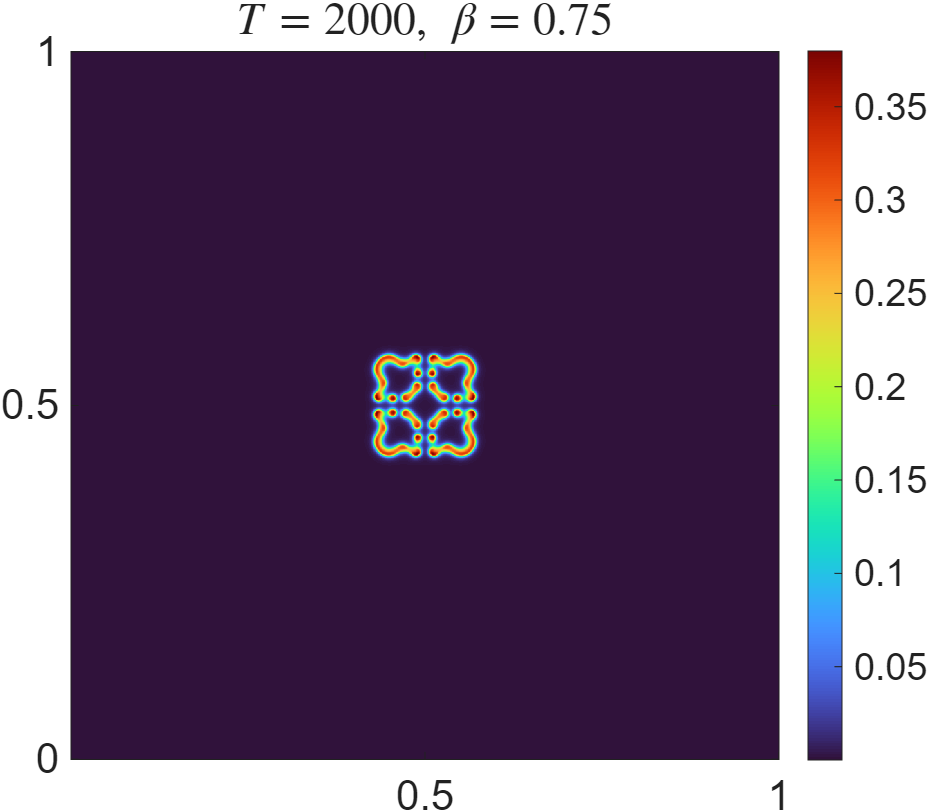}
\end{minipage}
\hfill
\begin{minipage}[t]{0.24\textwidth}
\centering
\includegraphics[width=\textwidth]{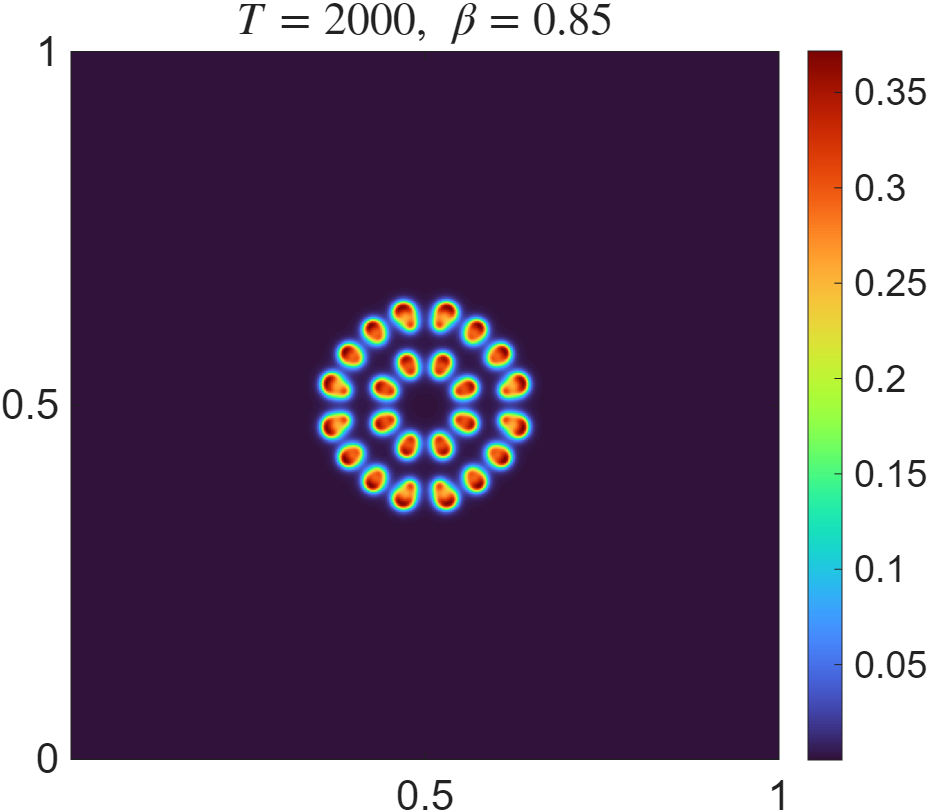}
\end{minipage}
\hfill
\begin{minipage}[t]{0.24\textwidth}
\centering
\includegraphics[width=\textwidth]{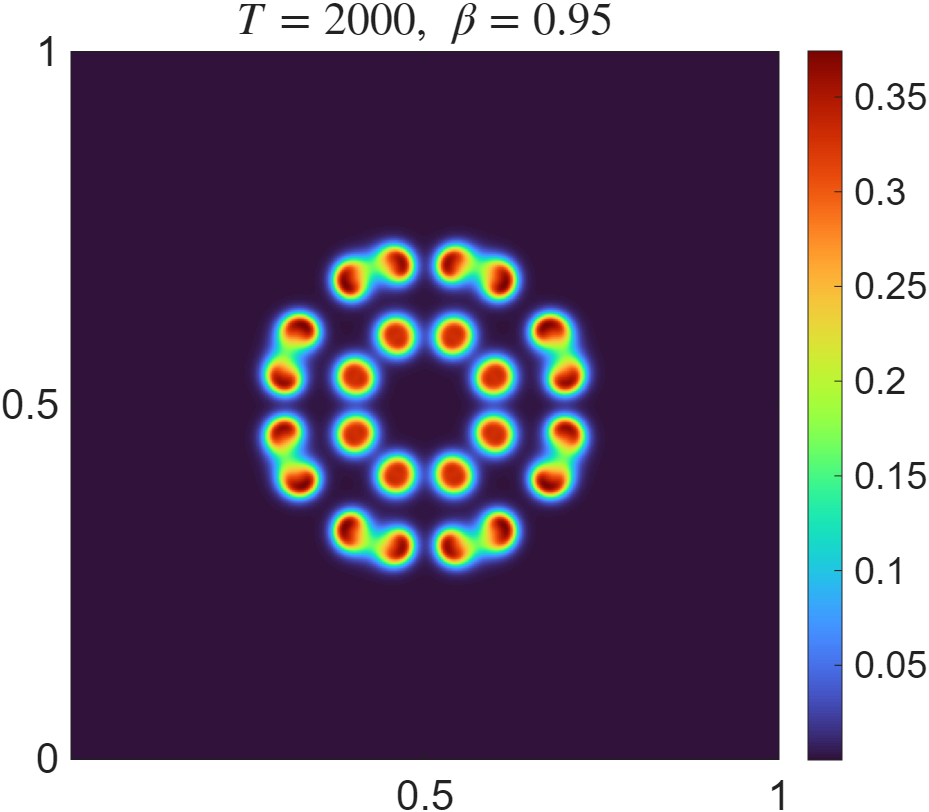}
\end{minipage}
\vspace{0.3cm} 

\begin{minipage}[t]{0.24\textwidth}
\centering
\includegraphics[width=\textwidth]{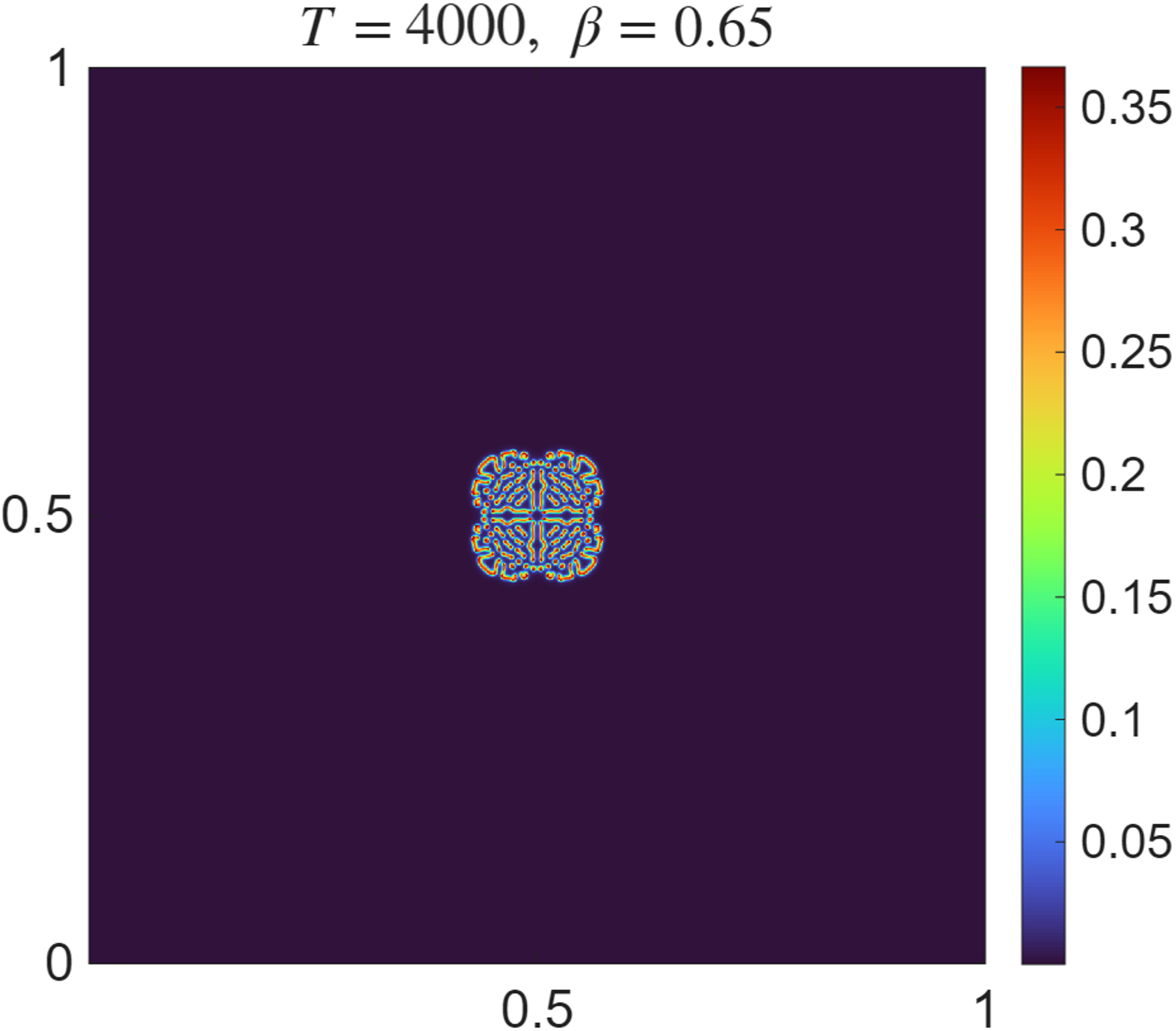}
\end{minipage}
\hfill
\begin{minipage}[t]{0.24\textwidth}
\centering
\includegraphics[width=\textwidth]{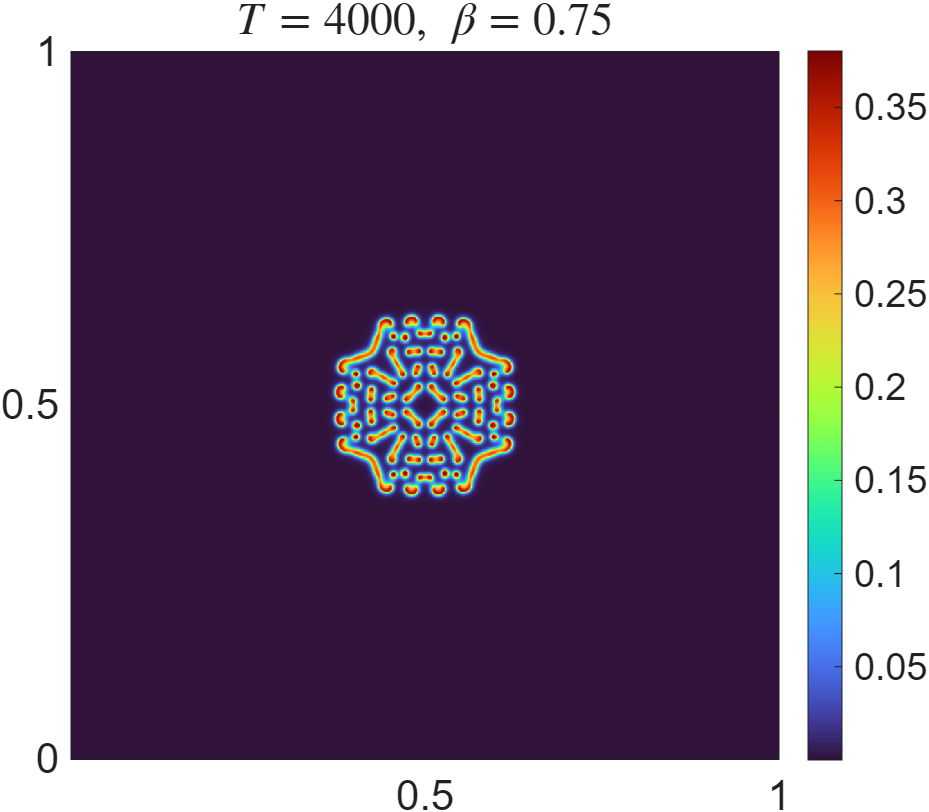}
\end{minipage}
\hfill
\begin{minipage}[t]{0.24\textwidth}
\centering
\includegraphics[width=\textwidth]{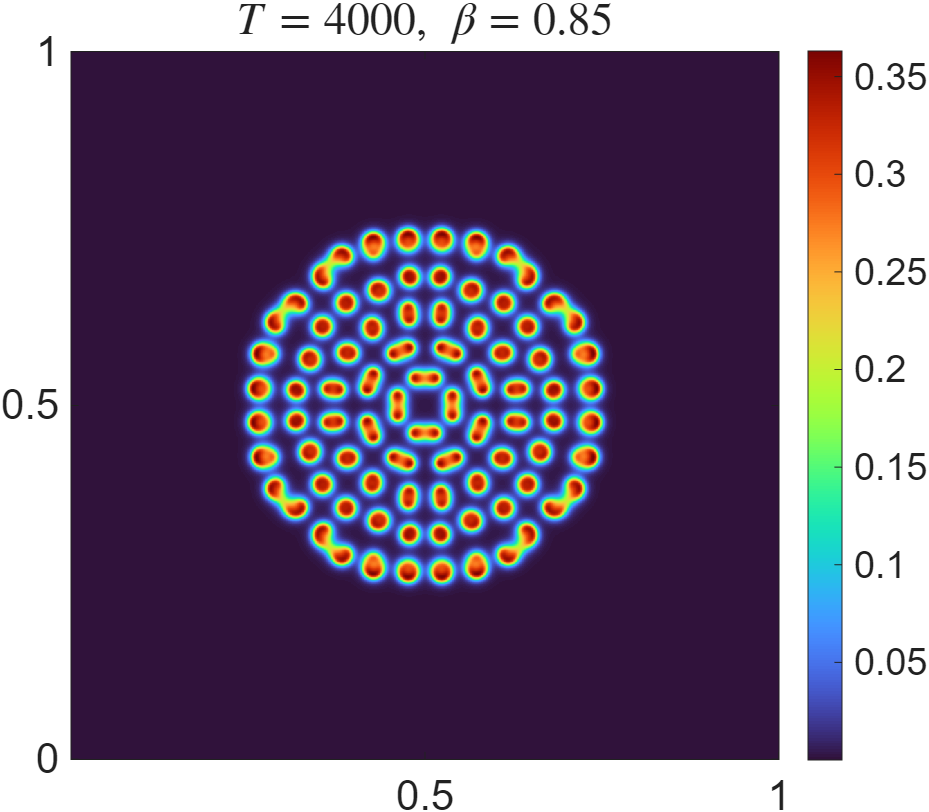}
\end{minipage}
\hfill
\begin{minipage}[t]{0.24\textwidth}
\centering
\includegraphics[width=\textwidth]{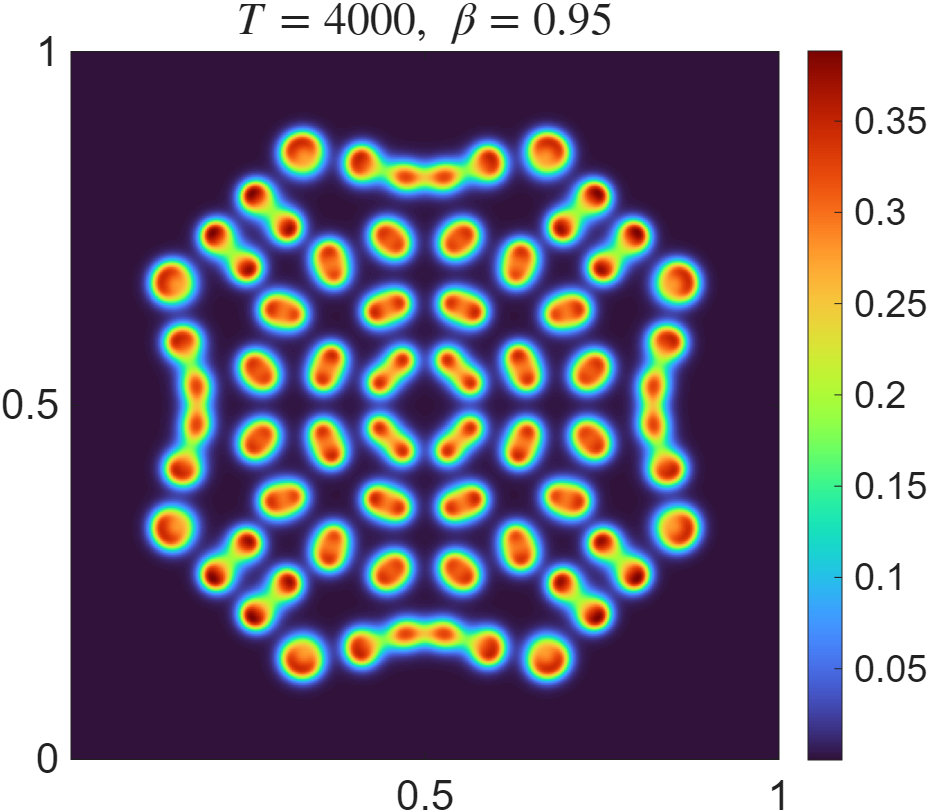}
\end{minipage}
\caption{Evolution of pattern formation in the fractional Gray-Scott model \eqref{gs}. The panels display the numerical solutions of the $v$-component for varying fractional orders $\beta \in \{0.65, 0.75, 0.85, 0.95\}$ at $T=2000$ (top) and $T=4000$ (bottom), with parameters $F = 0.026$ and $\kappa = 0.061$.}
\label{pattern_gs_1}
\end{figure}

\begin{figure}[htbp]
\centering
\begin{minipage}[t]{0.24\textwidth}
\centering
\includegraphics[width=\textwidth]{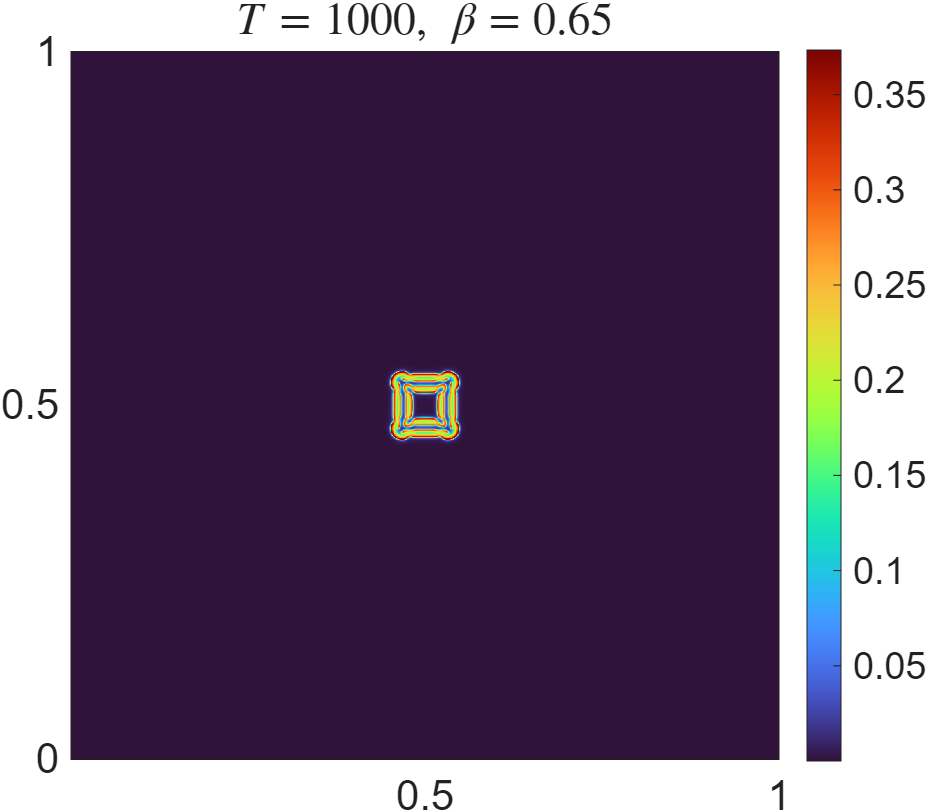}
\end{minipage}
\hfill 
\begin{minipage}[t]{0.24\textwidth}
\centering
\includegraphics[width=\textwidth]{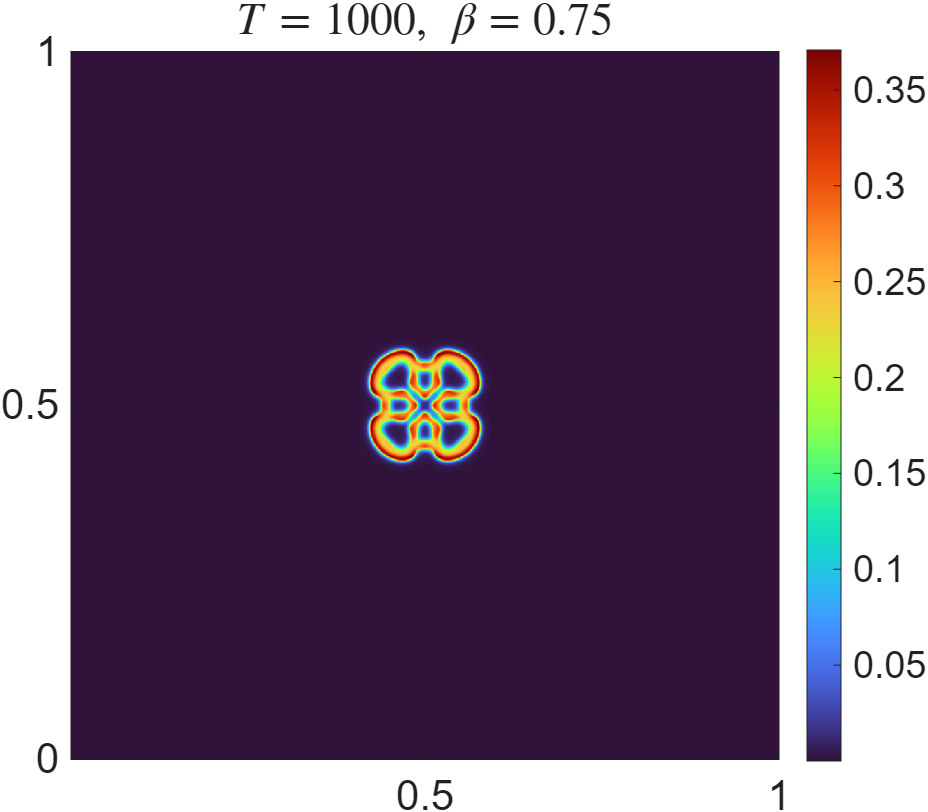}
\end{minipage}
\hfill
\begin{minipage}[t]{0.24\textwidth}
\centering
\includegraphics[width=\textwidth]{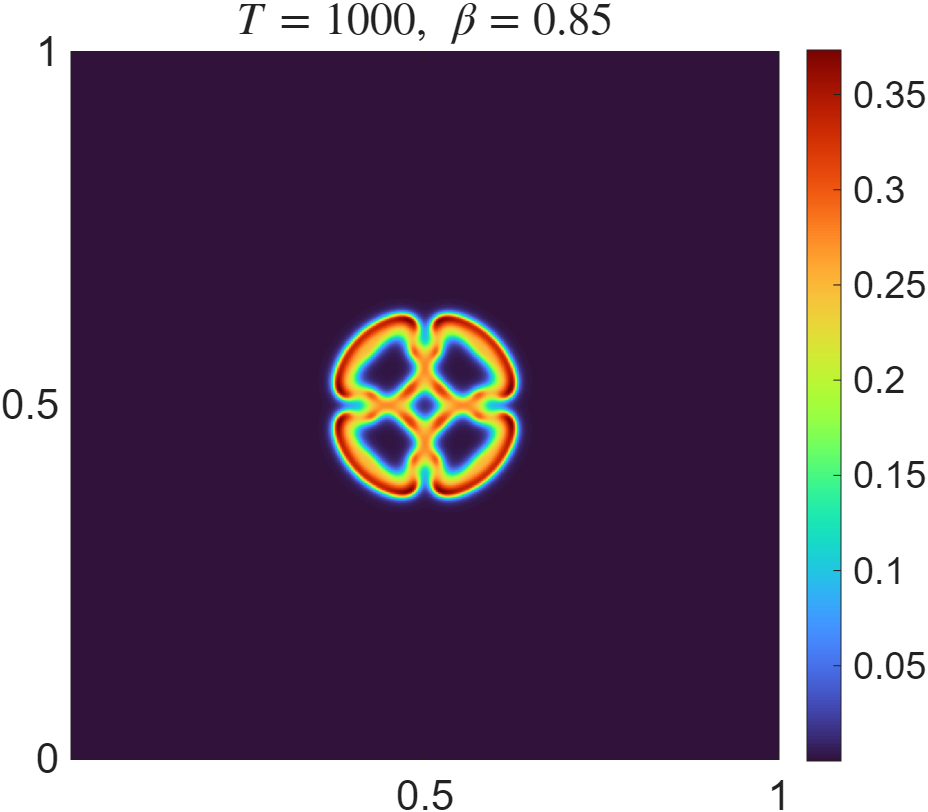}
\end{minipage}
\hfill
\begin{minipage}[t]{0.24\textwidth}
\centering
\includegraphics[width=\textwidth]{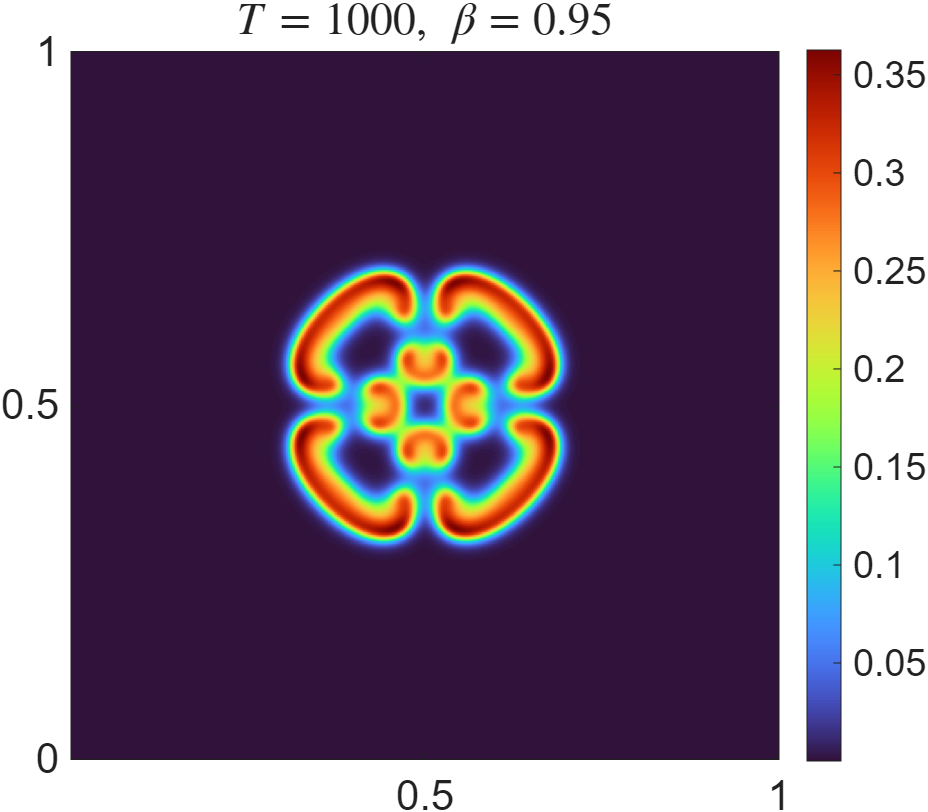}
\end{minipage}

\vspace{0.3cm} 

\begin{minipage}[t]{0.24\textwidth}
\centering
\includegraphics[width=\textwidth]{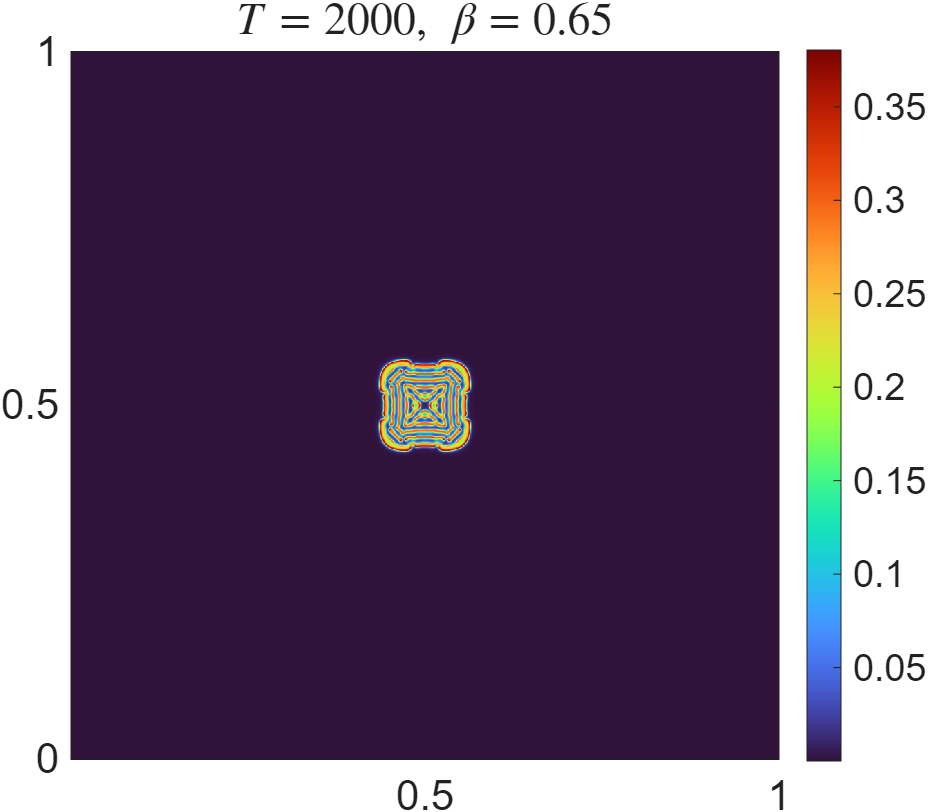}
\end{minipage}
\hfill
\begin{minipage}[t]{0.24\textwidth}
\centering
\includegraphics[width=\textwidth]{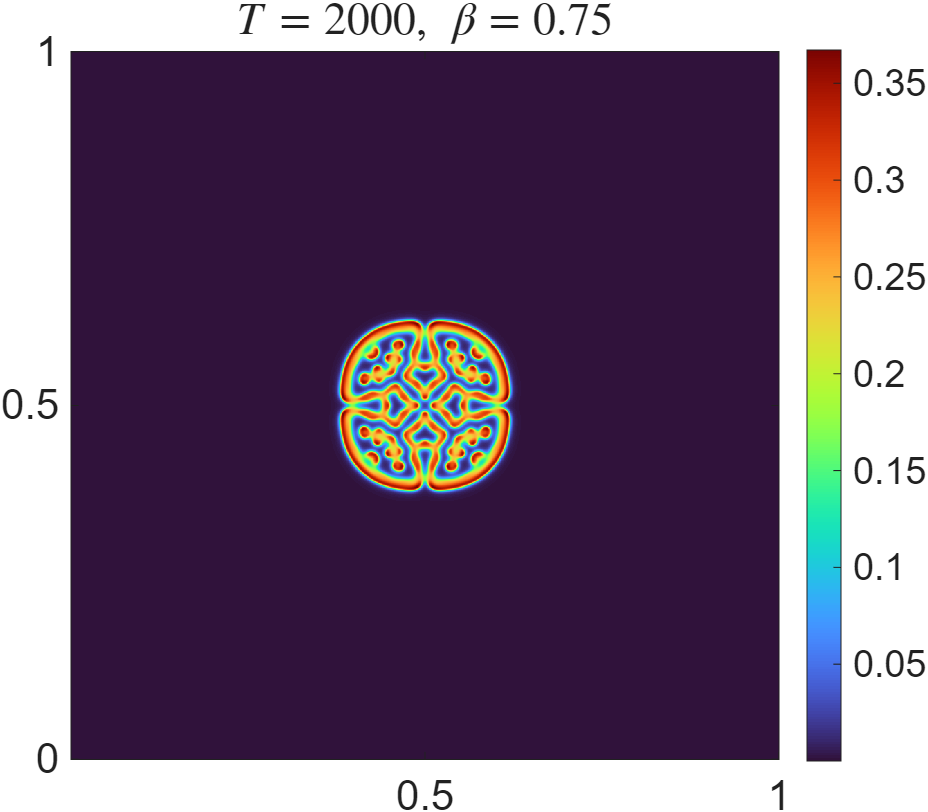}
\end{minipage}
\hfill
\begin{minipage}[t]{0.24\textwidth}
\centering
\includegraphics[width=\textwidth]{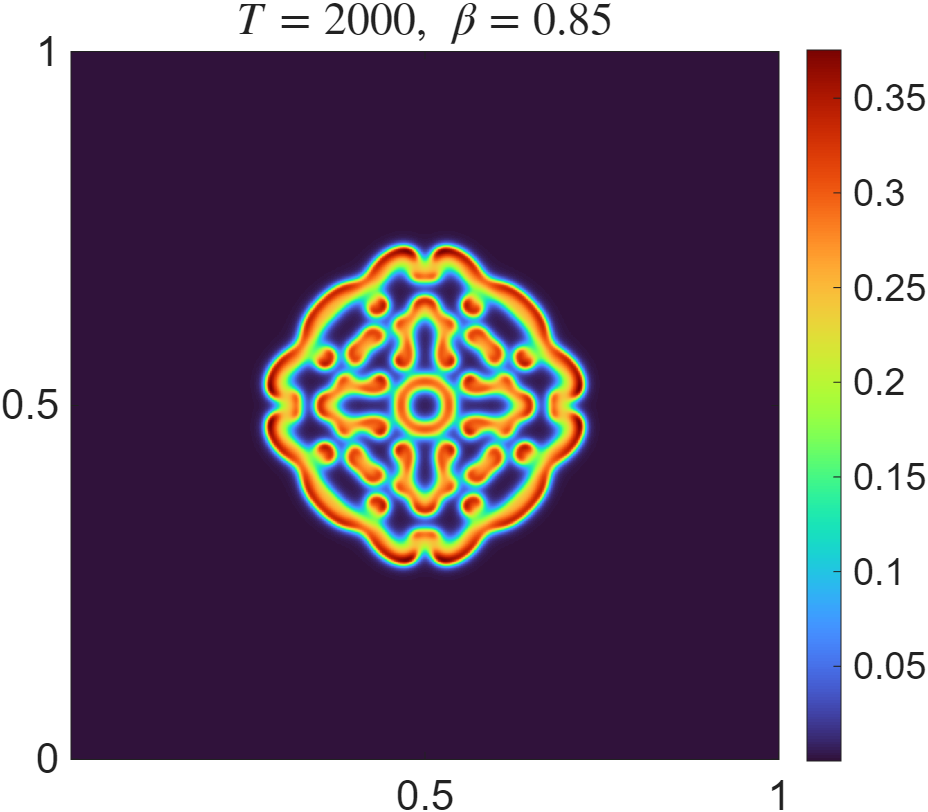}
\end{minipage}
\hfill
\begin{minipage}[t]{0.24\textwidth}
\centering
\includegraphics[width=\textwidth]{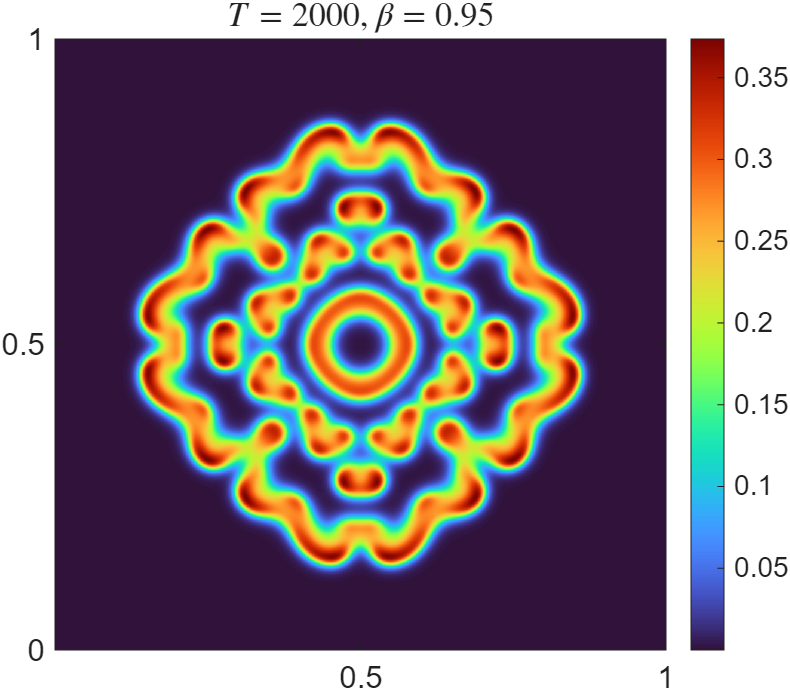}
\end{minipage}

\caption{Evolution of pattern formation in the fractional Gray-Scott model \eqref{gs}. The panels display the numerical solutions of the $v$-component for varying fractional orders $\beta \in \{0.65, 0.75, 0.85, 0.95\}$ at $T=1000$ (top) and $T=2000$ (bottom), with parameters $F = 0.03$ and $\kappa = 0.058$.}
\label{pattern_gs_2}
\end{figure}

\begin{figure}[htbp]
  \centering
\begin{minipage}[t]{0.25\textwidth}
    \includegraphics[scale=0.27]{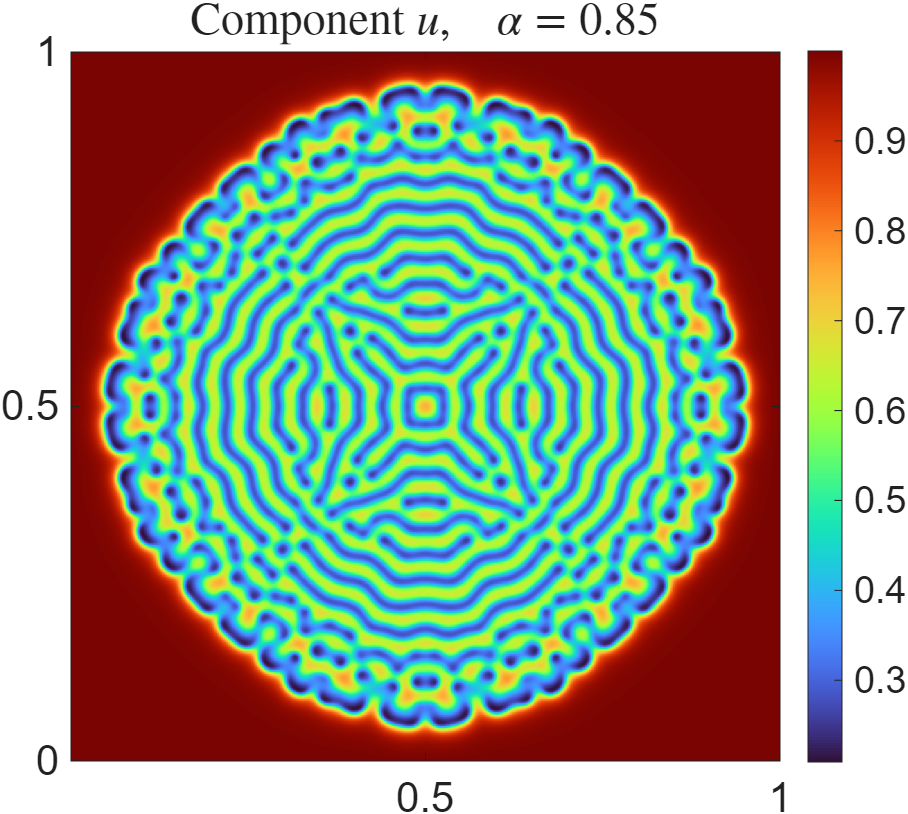}
\end{minipage}
  \centering
\begin{minipage}[t]{0.25\textwidth}
    \includegraphics[scale=0.27]{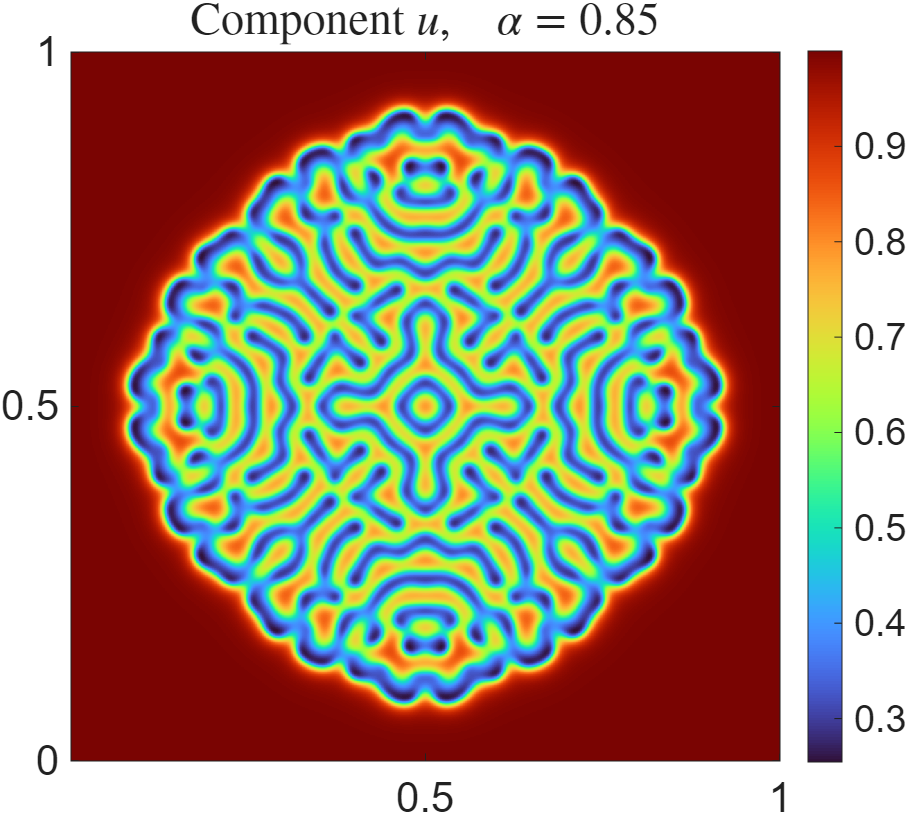}
\end{minipage}
  \centering
\begin{minipage}[t]{0.25\textwidth}
    \includegraphics[scale=0.27]{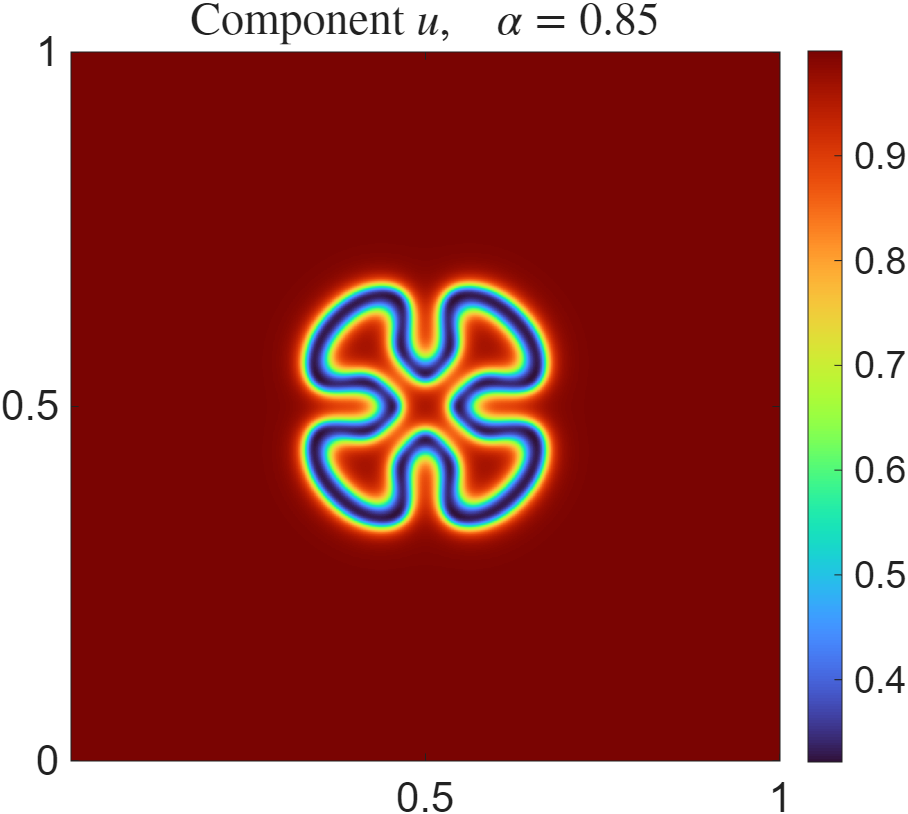}
\end{minipage}
  \centering
\begin{minipage}[t]{0.25\textwidth}
    \includegraphics[scale=0.27]{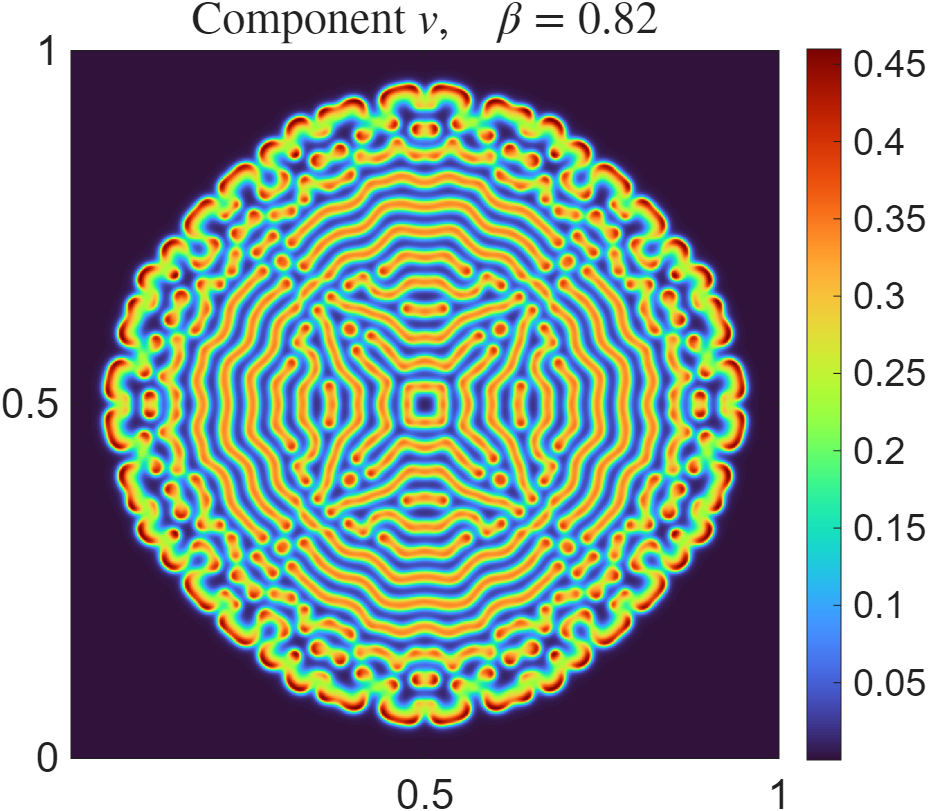}
\end{minipage}
  \centering
\begin{minipage}[t]{0.25\textwidth}
    \includegraphics[scale=0.27]{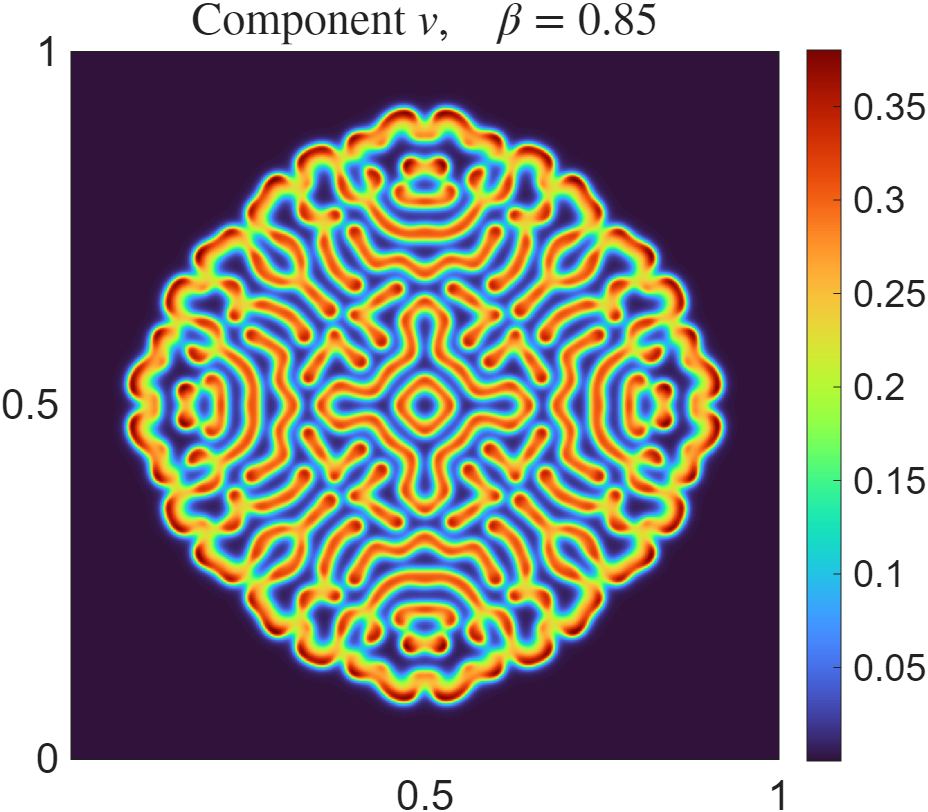}
\end{minipage}
  \centering
\begin{minipage}[t]{0.25\textwidth}
    \includegraphics[scale=0.27]{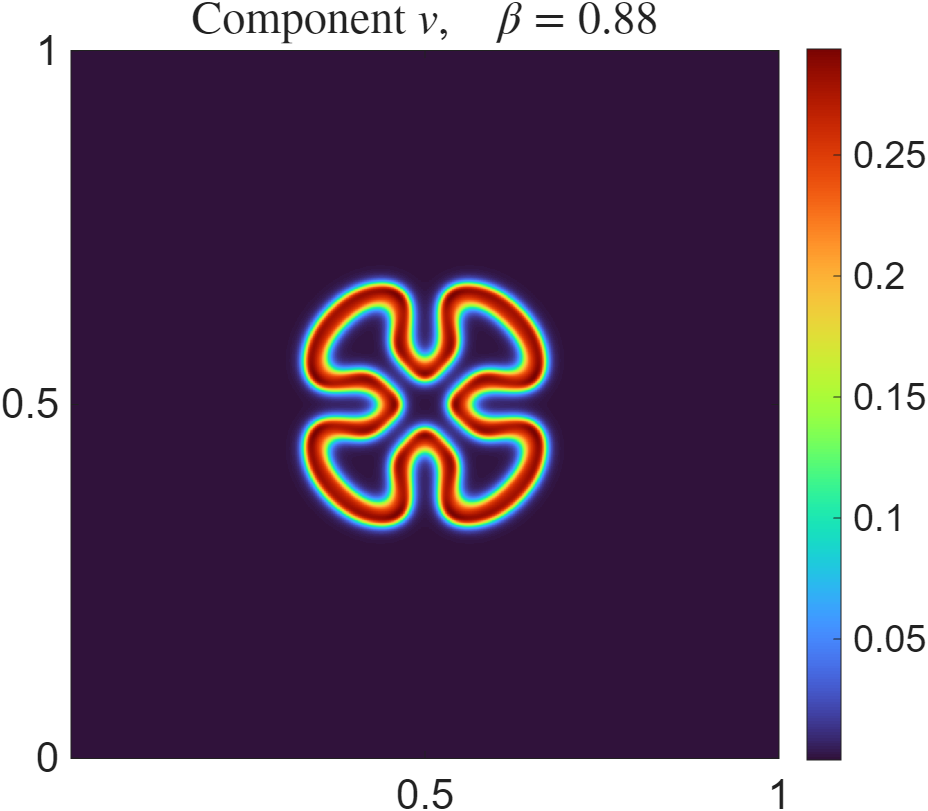}
\end{minipage}
\caption{Comparison of pattern formations for different $\beta \in \{0.82, 0.85, 0.88\}$ at $T=4000$. The numerical solutions for $u$ (top) and $v$ (bottom) with $F = 0.03$ and $\kappa=0.058$.}
\label{compare_gs_alpha_beta}
\end{figure}

Figure \ref{compare_gs_alpha_beta} compares the long-time spatial profiles produced by different fractional diffusion orders in the coupled system. Keeping \(\alpha\) fixed and increasing \(\beta\) strengthens the smoothing in the $v$-equation (each eigenmode decays like \(e^{-r_v\lambda_k^{\beta}t})\), so fine oscillations are damped more efficiently and the pattern scale becomes coarser: dense ring/labyrinth-like structures for smaller \(\beta\) change into fewer, more isolated ring-like spots for larger \(\beta\). This highlights that nontrivial patterns rely on differential diffusion between the two components, when the diffusion of $v$ becomes too strong relative to that of $u$, the coupling terms cannot sustain spatial modulation and the solution relaxes toward a spatially homogeneous equilibrium. This also explains why Gray–Scott studies often impose a clear diffusion contrast between the two components, and many classical parameter choices take \(D_u \gg D_v\) and $\alpha\ge\beta$ to promote robust localized structures and their replication-like dynamics in space \cite{wang2019fractional,yuan2024adaptive,zemskov2016diffusive}.

\section{Conclusions}
In this paper we present a unified analytical and computational framework for semilinear reaction--nonlocal diffusion equations driven by the spectral fractional Laplacian. We work in $C_0(\Omega)$ and use the semigroup $Q(t)=e^{-\epsilon^2(-\Delta)^\alpha t}$ together with the variation-of-constants formula to obtain well-posedness and instantaneous regularization. Moreover, under a quadratic-type control on the reaction term $\Nn$, we derive an $L^2$ a priori estimate that guarantees global well-posedness.

A key technical issue is the treatment of inhomogeneous Dirichlet conditions for spectral operators. Because prescribing a fixed nonzero trace yields an admissible class that is not a linear space, we review the inhomogeneous spectral fractional Laplacian and apply a lifting approach (shift) to reduce the problem to homogeneous boundary conditions. 

For the bistable RNDE \eqref{bistable}, we established an energy dissipation identity that yields monotone energy decay, a coercivity bound, and uniform-in-time control of weak solutions in $\mathbb H^{\alpha}(\Omega)$, combined with the compact embedding $\mathbb H^{\alpha}(\Omega)\hookrightarrow L^2(\Omega)$, it implies a nonempty $\omega$-limit set and identifies every $\omega$-limit point as a weak stationary solution. A fractional weak maximum principle further gives an invariant-range property between the two stable equilibria, and we also establish the corresponding weak maximum principle for the inhomogeneous operator $\flpg$. For the nonlocal Gray--Scott system \eqref{ori_gs}, we obtain positivity preservation for the components and an explicit global $L^\infty$ invariant region that prevents blow-up and yields global well-posedness. Moreover, an eigenfunction-weighted interior $L^2$ bound for $v$ tied to the first Dirichlet eigenpair quantifies how the $v$-component remains controlled away from the boundary.

On the computational side, the DST-I pseudospectral discretization exploits the Dirichlet eigenstructure and, when paired with ETDRK4, treats the stiff fractional diffusion through the semigroup while keeping the nonlinear part explicit. The resulting solver remains accurate over long horizons and reveals qualitative transitions as the fractional orders vary, consistent with the invariant/energy bounds and clarifying the modeling impact of using the spectral operator on bounded domains.

Future work will focus on extending the dynamical and analytical understanding of the nonlocal Gray--Scott system, as well as on developing efficient discretizations for the Dirichlet fractional Laplacian $\flp_D$. For the Gray--Scott model, we are currently investigating the steady-state structure and its bifurcations, with the aim of explaining pattern formation through the study of homogeneous equilibria stability and Turing-type instabilities associated with nonlocal diffusion, and of supporting the analysis through numerical continuation of stationary branches. We also plan to extend the interior weighted energy estimates for $(u,v)$ to the entire domain $\Omega$, aiming to establish global well-posedness and regularity in $L^2(\Omega)$ by imposing additional growth conditions on the reaction terms. In addition, we will study RNDEs driven by the Dirichlet (integral) fractional Laplacian $\flp_D$. By Proposition~\ref{compare_2_Laplacians}, for convex domains the eigenvalues of $\flp_D$ satisfy the two-sided bound \(\frac{1}{2}\lambda_k^\alpha \le \zeta_k \le \lambda_k^\alpha.\) This bound provides a concrete link between $\flp_D$ and the spectral fractional Laplacian, and it motivates transferring efficient spectral ideas developed here to the integral-operator setting, in particular the design of fast discretizations for $\flp_D$ that preserve the accuracy and long-time stability observed for DST-based spectral solvers.

\appendix
\section{Numerical method}
We briefly describe the sine pseudospectral discretization (DST--I) in space and the ETDRK4 time integrator used for the RNDE system defined in $\Omega=(0,L)^2$ and homogeneous Dirichlet boundary conditions are imposed.
\subsection{Spatial discretization (DST-I).}
Let the interior grid points be $x_i=\frac{iL}{N_x+1} (i=1,\dots,N_x)$ and $y_j=\frac{jL}{N_y+1} (j=1,\dots,N_y)$. With Dirichlet boundary conditions we expand in the tensor-product sine basis
$(\phi_k(x)=\sin(k\pi x/L)), (\psi_m(y)=\sin(m\pi y/L))$.
Then the inverse DST-I reads
\[
\widehat U=S_x^\top U S_y,\qquad
U=\frac{2}{N_x+1}\frac{2}{N_y+1} S_x\widehat US_y^\top.
\]
where $U = u_{ij}$ and $S_{x,ik}=\sin(\frac{\pi ik}{N_x+1}), S_{y,jm}=\sin(\frac{\pi jm}{N_y+1})$.
The Laplacian diagonalizes:
\[
\lambda_{k m}=\Big(\tfrac{k\pi}{L}\Big)^2+\Big(\tfrac{m\pi}{L}\Big)^2,\qquad
\flp \ \Longleftrightarrow\ \lambda_{k m}^{\alpha}.
\]
Define the modal linear symbols
\[
\mathcal{L}_{k m}=-\epsilon^2\,\lambda_{k m}^{\alpha}.
\]
Then we conclude the semi-discrete system of \eqref{ori_eq} as 
\[
\frac{d}{dt}\widehat U_{k m}= \mathcal{L}_{k m}\widehat U_{k m}+\widehat{\mathcal N(U)}_{k m},
\]
where nonlinear terms are evaluated pointwise in physical space and then transformed.

\subsection{Time stepping (ETDRK4)}After a sine pseudospectral (DST--I) discretization of $\Omega=(0,L)^2$ with homogeneous Dirichlet boundary conditions, 
the RNDE can be written as
\begin{equation}\label{eq:semi}
    \frac{d}{dt}\widehat U(t) = \mathcal{L}\,\widehat U(t) + \widehat{\Nn(U(t))}, \qquad \widehat U(0) = \widehat U_0,
\end{equation}
where $\widehat U=(\widehat u,\widehat v)^\top$.
The nonlinear reaction term $\widehat{\Nn(U)}$ is obtained by transforming the pointwise reaction
$\Nn(u)$ from the physical grid to spectral space via the DST--I transform. 
The exact solution of \eqref{eq:semi} satisfies the variation--of--constants formula
\begin{equation}\label{eq:voc}
    \widehat U(t) = e^{t\mathcal{L}} \widehat U_0 + \int_{0}^{t} e^{(t-\tau)\mathcal{L}}\,\widehat{\Nn\big(U(\tau)}\big) d\tau .
\end{equation}
Exponential integrators approximate only the nonlinear part in \eqref{eq:voc}, 
while the stiff linear operator $\mathcal{L}$ is treated exactly through the matrix exponential.

\paragraph{Formulation of the ETDRK4 scheme.\cite{etdrk4_0}}
Let $t_n$ denote the current time and $\Delta t>0$ the time step.
Define the operator functions
\[
\mathcal{E} = e^{\Delta t \mathcal{L}}, \qquad \mathcal{E}_{\frac 12} = e^{\frac{\Delta t}{2} \mathcal{L}},
\]
and the auxiliary $\Phi$--functions
\[
\Phi_1(\Delta t \mathcal{L}) = \mathcal{L}^{-1}(\mathcal{E} - I), \qquad
\Phi_1^{\frac 12}(\Delta t \mathcal{L}) = \mathcal{L}^{-1}(\mathcal{E}_{\frac 12} - I),
\]
\[
\Phi_2(\Delta t \mathcal{L}) = \mathcal{L}^{-2}(\mathcal{E} - I - \Delta t \mathcal{L}), \qquad
\Phi_3(\Delta t \mathcal{L}) = \mathcal{L}^{-3}\!\left(\mathcal{E} - I - \Delta t \mathcal{L} - \tfrac{1}{2}(\Delta t \mathcal{L})^2\right),
\]
applied elementwise to each spectral mode. 
Given $\widehat U_n$ at time $t_n$, the ETDRK4 stages are defined by
\begin{align*}
a_n &= \mathcal{E}_{\frac 12} \widehat U_n + \Phi_1^{\frac 12}(\Delta t \mathcal{L}) \widehat{\Nn(U_n)}, \\
b_n &= \mathcal{E}_{\frac 12} \widehat U_n + \Phi_1^{\frac 12}(\Delta t \mathcal{L}) \widehat{\Nn(a_n)}, \\
c_n &= \mathcal{E}_{\frac 12} a_n + \Phi_1^{\frac 12}(\Delta t \mathcal{L})\big(2\widehat{\Nn(b_n)} - \widehat{\Nn(U_n)}\big).
\end{align*}
The solution is then advanced as
\begin{equation}\label{eq:etdrk4}
\begin{aligned}
\widehat U_{n+1} &= \mathcal{E} \widehat U_n 
+ \Phi_1(\Delta t \mathcal{L}) \widehat{\Nn(U_n)}
+ 2 \Phi_2(\Delta t \mathcal{L})\big(\widehat{\Nn(a_n)} + \widehat{\Nn(b_n)}\big)
+ \Phi_3(\Delta t \mathcal{L}) \widehat{\Nn(c_n)} .
\end{aligned}
\end{equation}

All operations in \eqref{eq:etdrk4} are performed mode-by-mode (and componentwise for vector-valued systems) in spectral space. 
In practice, the functions $\Phi_j(\Delta t \mathcal{L})$ are computed using stabilized contour averages or the equivalent $\varphi$--function representation
$\Phi_j = (\Delta t)^j \varphi_j(\Delta t \mathcal{L})$, 
which avoids cancellation for small $\|\Delta t \mathcal{L}\|_{\mathrm{op}}$.
The resulting ETDRK4 scheme is explicit, fourth--order accurate in time, and treats the stiff fractional diffusion operator exactly through the exponential while approximating only the nonlinear reaction term, see \cite{kassam2005fourth,schmelzer2006evaluating} for more details.

For reproducibility, the bistable simulations in Section~4 use time step $\Delta t=10^{-2}$ and the Gray--Scott computations in Section~5 use time step $\Delta t=4\times10^{-1}$. Both simulations use $16$-point complex contour averages. For each spectral mode we set
\[
r_j=\exp\!\left(i\pi(j-\tfrac12)/16\right),\qquad
Z_j=\Delta t\,\mathcal L+r_j,\qquad j=1,\ldots,16,
\]
and compute the ETDRK4 coefficients $Q,f_1,f_2,f_3$ by the standard stabilized averages over the values $Z_j$ \cite{kassam2005fourth,schmelzer2006evaluating}. In all inhomogeneous-boundary simulations, the DST-I is applied only to the shifted zero-trace variable. Thus, for a constant boundary value $g$ we evolve $w=u-g$ and add $g$ back only when plotting the original variable.

\bibliographystyle{IEEEtran}
\bibliography{ref}
\end{document}